\newtheorem{thrm}{Theorem}
\newtheorem{lem}[thrm]{Lemma}
\newtheorem{cor}[thrm]{Corollary}
\newtheorem{conj}[thrm]{Conjecture}
\newcommand{\bvec}[1] {{\mathbf {#1}}}
\newcommand{\PCrit}{{P_{\rm cr}}}
\newcommand{\BPCrit}{{P_{\rm cr}}}
\newcommand{\TCrit}{{T_{\rm c}}}
\newcommand{\rmax}{ { {\it r_{\max}}  } }
\newcommand{\Schrodinger}{{Schr\"odinger }}
\newcommand{\mycaption}[1]{\parbox{0.95\textwidth}{\caption{{#1}}}}
\newcommand{\Real}{{\mathbb R}}
\newcommand{\abs}[1]{\left\vert{#1}\right\vert}
\newcommand{\norm}[1]{\left\Vert{#1}\right\Vert}
\newcommand{\weakto}{{\rightharpoonup}}
\newcommand{\LN}{{ \mathit l }}
\begin{document}

\renewcommand{\thesubfigure}{{\Alph{subfigure}}}

\title{Singular solutions of the biharmonic %
Nonlinear Schr\"odinger equation}

\makeatletter
\author{{G. Baruch}$^*$, {G. Fibich}, {E. Mandelbaum},\\
	School of Mathematical Sciences, Tel Aviv University, Tel Aviv 69978, Israel\\
	$^*$Corresponding author, guybar@math.tau.ac.il
}
\makeatother
\maketitle

\begin{abstract}
	We consider singular solutions of the biharmonic NLS.
	In the $L^2$-critical case, the blowup rate is bounded by a quartic-root
	power law, the solution approaches a self-similar profile, and a finite amount
	of $L^2$-norm, which is no less than the critical power, concentrates into
	the singularity (``strong collapse'').
	In the $L^2$-critical and supercritical cases, we use asymptotic analysis
	and numerical simulations to characterize singular solutions with a peak-type
	self-similar collapsing core.
	In the critical case, the blowup rate is slightly faster than a
	quartic-root, and the self-similar profile is given by the standing-wave
	ground-state.
	In the supercritical case, the blowup rate is exactly a quartic-root, and
	the self-similar profile is a zero-Hamiltonian solution of a nonlinear
	eigenvalue problem.
	These findings are verified numerically (up to focusing levels of~$10^8$)
	using an adaptive grid method.
	We also calculate the ground states of the standing-wave equations and
	the critical power for collapse in two and three dimensions.

\end{abstract}

\section{\label{sec:intro}Introduction}


The focusing nonlinear \Schrodinger equation (NLS)
\begin{equation}	\label{eq:NLS}
	i\psi_t(t,\bvec{x}) + \Delta\psi + \left|\psi\right|^{2\sigma}\psi = 0,
	\qquad \psi(0,\bvec{x}) = \psi_0(\bvec{x})\in H^1(\Real^d),
\end{equation}
where~$
	\bvec x=\left(x_1,\dots,x_d\right)\in\Real^d
$, and~$
	\Delta = \sum_{j=1}^d\partial_j^2
$ is the Laplacian, has been the subject of intense study, due to its role in
various areas of physics, such as nonlinear optics and Bose-Einstein Condensates
(BEC).
It is well-known that the NLS~\eqref{eq:NLS} possesses solutions that become
singular in a finite time~\cite{Sulem-99}.
Of special interest is the critical~($\sigma=2/d$) NLS
\begin{equation}	\label{eq:CNLS}
	i\psi_t(t,\bvec{x}) + \Delta\psi + \left|\psi\right|^{4/d}\psi = 0,
	\qquad \psi(0,\bvec{x}) = \psi_0(\bvec{x})\in H^1(\Real^d),
\end{equation}
which models the collapse of intense laser beams that propagate in a bulk Kerr
medium.

In this study, we consider the focusing biharmonic nonlinear \Schrodinger equation
(BNLS)
\begin{equation}	\label{eq:BNLS}
	i\psi_t(t,\bvec{x}) - \Delta^2\psi + \left|\psi\right|^{2\sigma}\psi = 0,
	\qquad \psi(0,\bvec{x}) = \psi_0(\bvec{x})\in H^2(\Real^d),
\end{equation}
where~$\Delta^2$ is the biharmonic operator.
Equation~\eqref{eq:BNLS} admits waveguide (standing-wave) solutions of the form~$
	\psi(t,\bvec{x}) =
		\lambda^{2/\sigma} e^{i\lambda^4t} R(\lambda\bvec{x})
$, where~$R$ satisfies the "standing-wave" equation 
\begin{equation}	\label{eq:stationary_state}
	-\Delta^2 R(\bvec{x}) - R + |R|^{2\sigma}R = 0.
\end{equation}

%
The BNLS~\eqref{eq:BNLS} is called~``{\em $L^2$-critical}'', or simply
``critical'' if~$\sigma d=4$.
In this case, the~$L^2$ norm (``{\em power}'') is conserved under the BNLS dilation
symmetry~$
	\psi\left( t,\bvec{x} \right)
	\mapsto L^{-2/\sigma} \psi\left(t/L^4,\bvec{x}/L \right)
$.
The critical BNLS can be rewritten as
\begin{equation}	\label{eq:CBNLS}
	i\psi_t(t,\bvec{x}) - \Delta^2\psi
		+\left|\psi\right|^{8/d}\psi  =  0,
	\qquad \psi(0,\bvec{x}) = \psi_0(\bvec{x})\in H^2(\Real^d).
\end{equation}
Correspondingly, the BNLS with~$\sigma d<4$ is called subcritical, and the BNLS 
with~$\sigma d>4$ is called supercritical.
This is analogous to the NLS, where the critical case is~$\sigma d=2$.

In~\cite{Ben-Artzi-00}, Ben-Artzi, Koch and Saut proved that the
BNLS~\eqref{eq:BNLS} is locally well-posed in~$H^2$, when~$\sigma$ is in
the \emph{$H^2$-subcritical} regime
\begin{equation}	\label{eq:admissible-range}
	\begin{cases}
		0<\sigma & d\leq4,\\
		0<\sigma<\frac4{d-4} & d>4.
	\end{cases}
\end{equation} 

\noindent
Global existence and scattering of BNLS solutions in the $H^2$-critical 
case~$\sigma=4/(d-4)$ were studied by Miao, Xu and Zhao~\cite{Miao20093715} and by
Pausader~\cite{Pausader_DCDS-2009a}.
The latter work also showed well-posedness for small data.
The $H^2$-critical defocusing BNLS was studied by Miao, Xu and Zhao~\cite{miao-2008}
and by Pausader~\cite{Pausader_DPDEs-2007,Pausader_JFA2009}.

The above studies focused on non-singular solutions.
In this work, we study singular solutions of the BNLS in~$H^2$, i.e., solutions that
exist in~$H^2(\Real^d)$ over some finite time interval~$t\in[0,\TCrit)$, 
but for which~$
	\displaystyle \lim_{t\to\TCrit} \norm{\psi}_{H^2} = \infty.
$
The first study of singular BNLS solutions was done by Fibich, Ilan and
Papanicolau~\cite{Fibich_Ilan_George_BNLS:2002}, who proved the following results:
\begin{thrm} \label{thrm:GE_subcritical}
	Solutions of the subcritical ($\sigma d<4$) focusing BNLS~\eqref{eq:BNLS} exist
	globally.
\end{thrm}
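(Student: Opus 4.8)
The plan is to derive an a priori bound on $\norm{\psi}_{H^2}$ that is uniform in time, using the two conservation laws of the BNLS together with a Gagliardo--Nirenberg inequality, and then to invoke the blowup alternative that accompanies the local well-posedness of Ben-Artzi, Koch and Saut. Since the admissible range~\eqref{eq:admissible-range} is precisely the $H^2$-subcritical regime, a solution that does not blow up in $H^2$ can be continued past any finite time, so it suffices to rule out finite-time growth of $\norm{\psi}_{H^2}$.

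First I would record the conserved quantities. Multiplying~\eqref{eq:BNLS} by $\overline{\psi}$, integrating, and taking imaginary parts gives conservation of the power $\norm{\psi}_{L^2}^2$, while the Hamiltonian structure yields conservation of
\begin{equation*}
	H(\psi) = \int \abs{\Delta\psi}^2\,d\bvec{x}
		- \frac{1}{\sigma+1}\int \abs{\psi}^{2\sigma+2}\,d\bvec{x}.
\end{equation*}
Hence $\norm{\Delta\psi}_{L^2}^2 = H(\psi_0) + \frac{1}{\sigma+1}\norm{\psi}_{L^{2\sigma+2}}^{2\sigma+2}$, and the task reduces to controlling the nonlinear term by the kinetic term.

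Next I would estimate $\norm{\psi}_{L^{2\sigma+2}}^{2\sigma+2}$ by the Gagliardo--Nirenberg inequality built on the biharmonic kinetic energy,
\begin{equation*}
	\norm{\psi}_{L^{2\sigma+2}}^{2\sigma+2}
		\leq C\,\norm{\Delta\psi}_{L^2}^{d\sigma/2}\,
			\norm{\psi}_{L^2}^{2\sigma+2-d\sigma/2},
\end{equation*}
whose exponents are fixed by scaling, and which is valid precisely when $2\sigma+2$ does not exceed the Sobolev exponent of $\dot H^2$, a condition again guaranteed by~\eqref{eq:admissible-range}. The decisive observation is that in the subcritical case $\sigma d<4$ the power $d\sigma/2$ on $\norm{\Delta\psi}_{L^2}$ is strictly less than $2$. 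Writing $X=\norm{\Delta\psi}_{L^2}^2$ and using power conservation to treat $\norm{\psi}_{L^2}$ as a constant, the Hamiltonian identity becomes $X \leq H(\psi_0) + C' X^{d\sigma/4}$ with $d\sigma/4<1$. Young's inequality then absorbs the nonlinear term, $C' X^{d\sigma/4}\leq \tfrac12 X + C''$, yielding $X \leq 2H(\psi_0)+2C''$, a bound depending only on the conserved quantities and the dimension. Together with power conservation this controls $\norm{\psi}_{H^2}$ uniformly in $t$, and global existence follows from the continuation criterion.

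The routine parts are the conservation laws and the arithmetic of the Gagliardo--Nirenberg exponents. The step that carries the real content---and the only place where subcriticality enters---is the absorption argument: one must verify that $d\sigma/2<2$ renders the nonlinear contribution subdominant to the kinetic energy. I expect the main technical care to lie in confirming that $2\sigma+2$ stays within the range where the Gagliardo--Nirenberg/Sobolev embedding holds (so that the constant $C$ is finite), which coincides exactly with the admissible interval~\eqref{eq:admissible-range}; at the endpoint $\sigma d=4$ the exponent becomes quadratic and the absorption fails, consistent with the appearance of collapse in the critical case.
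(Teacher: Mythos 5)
Your argument is correct and is essentially the proof given in the cited reference \cite{Fibich_Ilan_George_BNLS:2002} (the paper itself only states the theorem and does not reprove it): power and Hamiltonian conservation, the Gagliardo--Nirenberg bound $\norm{\psi}_{2\sigma+2}^{2\sigma+2}\le C\norm{\Delta\psi}_2^{\sigma d/2}\norm{\psi}_2^{2\sigma+2-\sigma d/2}$, absorption of the nonlinear term via $\sigma d/2<2$, and the $H^2$ continuation criterion of \cite{Ben-Artzi-00}. The exponent bookkeeping and the endpoint discussion at $\sigma d=4$ are both right, so nothing is missing.
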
 
\begin{thrm}	\label{thrm:GE_critical}
	Let $ \norm{\psi_0}_2^2 < \BPCrit$, where~$\BPCrit = \norm{R}_2^2$, and~$R$ is
	the ground state of~\eqref{eq:stationary_state} with~$\sigma=4/d$.
	Then, the solution of the critical focusing BNLS~\eqref{eq:CBNLS} exists
	globally.
\end{thrm}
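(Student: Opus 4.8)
The plan is to derive a uniform-in-time a priori bound on the $H^2$ norm of the solution, which, by the local well-posedness theory of Ben-Artzi, Koch and Saut together with its blowup alternative, rules out finite-time singularity formation. Since the power $\norm{\psi}_2^2$ is conserved and $\norm{\psi}_{H^2}$ is controlled by $\norm{\psi}_2$ and $\norm{\Delta\psi}_2$ alone (the intermediate derivatives being absorbed by $\norm{\nabla\psi}_2^2\le\norm{\psi}_2\norm{\Delta\psi}_2$), it suffices to bound $\norm{\Delta\psi(t)}_2$ uniformly on every interval $[0,T]\subset[0,\TCrit)$. Thus the task reduces to a single a priori estimate.

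The two ingredients are the conserved quantities and a sharp interpolation inequality. Besides the power, the critical BNLS~\eqref{eq:CBNLS} conserves the Hamiltonian $H[\psi]=\norm{\Delta\psi}_2^2-\frac{d}{d+4}\int|\psi|^{2+8/d}\,d\bvec{x}$. The relevant tool is the $L^2$-critical biharmonic Gagliardo--Nirenberg inequality $\int|\psi|^{2+8/d}\,d\bvec{x}\le C_{\rm opt}\,\norm{\Delta\psi}_2^2\,\norm{\psi}_2^{8/d}$, whose exponents are forced by the power-preserving scaling $\psi\mapsto L^{d/2}\psi(L\,\cdot)$ and whose optimal constant $C_{\rm opt}$ is attained by the ground state $R$. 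Solving the Hamiltonian for $\norm{\Delta\psi}_2^2$, inserting this inequality, and using conservation of the power gives $\norm{\Delta\psi}_2^2\le H[\psi_0]+\frac{d}{d+4}C_{\rm opt}\norm{\psi_0}_2^{8/d}\norm{\Delta\psi}_2^2$. Hence $\norm{\Delta\psi}_2^2$ is bounded uniformly in time as soon as $\frac{d}{d+4}C_{\rm opt}\norm{\psi_0}_2^{8/d}<1$.

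It remains to show that this threshold is exactly $\norm{\psi_0}_2^2<\BPCrit$, which I would do by evaluating $C_{\rm opt}$ from the standing-wave equation~\eqref{eq:stationary_state} with $\sigma=4/d$. Pairing that equation with $\bar R$ yields $\int|R|^{2+8/d}\,d\bvec{x}=\norm{\Delta R}_2^2+\norm{R}_2^2$, while the dilation (Pohozaev) identity obtained from $\tfrac{d}{d\mu}\big|_{\mu=1}$ of the action evaluated at $R(\mu\,\cdot)$ gives $\norm{\Delta R}_2^2=\tfrac{d}{4}\norm{R}_2^2$ (so that, incidentally, $H[R]=0$). Substituting $R$ into the sharp inequality and using these two relations gives $C_{\rm opt}=(d+4)/(d\,\norm{R}_2^{8/d})$, whence $\frac{d}{d+4}C_{\rm opt}\norm{\psi_0}_2^{8/d}=\norm{\psi_0}_2^{8/d}/\norm{R}_2^{8/d}$ and the smallness condition collapses to $\norm{\psi_0}_2<\norm{R}_2$, i.e. $\norm{\psi_0}_2^2<\BPCrit$.

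The main obstacle is the sharp inequality itself: proving that $C_{\rm opt}$ is finite and is attained precisely by the ground state. This is the biharmonic analogue of Weinstein's variational argument, in which one minimizes the quotient $J[u]=\norm{\Delta u}_2^2\,\norm{u}_2^{8/d}/\int|u|^{2+8/d}\,d\bvec{x}$ and verifies that the Euler--Lagrange equation of a minimizer is, after rescaling, the standing-wave equation~\eqref{eq:stationary_state}. The delicate point is existence of a minimizer: unlike the second-order case, the Dirichlet-type energy $\norm{\Delta u}_2$ is not decreased under symmetric rearrangement, so P\'olya--Szeg\H{o} is unavailable and compactness of a minimizing sequence must instead be extracted by the concentration--compactness method, excluding vanishing and dichotomy modulo the scaling and translation invariances of $J$. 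Once the minimizer is secured and $C_{\rm opt}$ identified, the preceding steps close the argument.
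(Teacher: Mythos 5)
Your proposal is correct and is essentially the argument behind this theorem: the paper does not reprove it (it is quoted from Fibich--Ilan--Papanicolaou), but its Appendix on the Gagliardo--Nirenberg inequality records exactly your chain of reasoning, with Corollary~\ref{cor:P2H_bound} being your rearranged estimate $H[f]\geq\bigl[1-(\norm{f}_2^2/\BPCrit)^{\sigma}\bigr]\norm{\Delta f}_2^2$ and $\BPCrit=\norm{R}_2^2=\bigl((\sigma+1)/B_{\sigma,d}\bigr)^{1/\sigma}$ being your identification of the optimal constant via the Pohozaev relations. Your closing remarks on the delicate point --- existence and identification of the minimizer of the $H^2$ Gagliardo--Nirenberg quotient without rearrangement tools --- correctly locate where the real work lies, which is carried out in the cited reference rather than in this paper.
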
 

\noindent
The simulations in~\cite{Fibich_Ilan_George_BNLS:2002} suggested that there exist
singular solutions for~$\sigma d=4$ and~$\sigma d>4$, and that these singularities
are of the blowup type, namely, the solution becomes infinitely
localized.
However, in contradistinction with NLS theory, there is currently no rigorous proof
that solutions of the BNLS can become singular in either the critical or the
supercritical case.

To the best of our knowledge, the only work, apart
from~\cite{Fibich_Ilan_George_BNLS:2002}, which considered singular solutions of
the BNLS is by Chae, Hong and Lee~\cite{ChaeHongLee:2009}, who proved that if
singular solutions of the critical BNLS exist, then they have a
power-concentration property.
See Section~\ref{ssec:power_conc} for more details.

\subsection{\label{ssec:intro_summary}Summary of results}

In this work, we consider singular solutions of the focusing BNLS in the
$H^2$-subcritical regime~\eqref{eq:admissible-range}.
Our purpose is to characterize these singular solutions:
Their profile, blowup rate, power concentration, et cetera.

In some cases, we assume radial symmetry, i.e., that~$
	\psi(t,\bvec{x}) \equiv \psi(t,r)
$
where $
	r=\norm{\bvec x}_2=\sqrt{x_1^2+\dots+x_d^2}.
$
In these cases, equation~\eqref{eq:BNLS} reduces to 
\begin{equation}	\label{eq:radial_BNLS}
	i\psi_t(t,r) - \Delta^2_r\psi + \left|\psi\right|^{2\sigma}\psi = 0,
	\qquad \psi(0,r) = \psi_0(r),
\end{equation}
where
\begin{equation}	\label{eq:radial_bi_Laplacian}
	\Delta_r^2 = 
		\partial_r^4 
		+\frac{2(d-1)}{r}\partial_r^3
		+\frac{(d-1)(d-3)}{r^2}\partial_r^2
		-\frac{(d-1)(d-3)}{r^3}\partial_r
\end{equation}
is the radial biharmonic operator.
Specifically, the critical BNLS~\eqref{eq:CBNLS} reduces to 
\begin{equation}	\label{eq:radial_CBNLS}
	i\psi_t(t,r) - \Delta^2_r\psi + \left|\psi\right|^{8/d}\psi = 0,
	\qquad \psi(0,r) = \psi_0(r).
\end{equation}

The paper is organized as follows:
In Section~\ref{sec:Invariance}, we use Noether Theorem to derive conservation laws
for the BNLS.
We recall that in the critical NLS, the conservation law which follows from
invariance of the action integral under dilation leads to the NLS
``Variance Identity'', which can be used to prove the existence of singular
solutions.
In Section~\ref{ssec:varid} we use a similar procedure to derive the
``Variance Identity'' for the critical BNLS, and then generalize it to the
supercritical BNLS.
However, since it is not clear that the ``BNLS variance'' is positive definite, this
identity does not lead to a proof of the existence of singular solutions.

The ground states of the BNLS standing-wave equation~\eqref{eq:stationary_state}
were previously computed only in the one-dimensional
case~\cite{Fibich_Ilan_George_BNLS:2002}, since they were computed using a shooting
method, which cannot be easily generalized to multi-dimensions.
In Section~\ref{sec:stationary} we use the spectral renormalization method to
compute the ground-states of the critical BNLS~\eqref{eq:CBNLS} for one, two and
three dimensions.
The calculated ground-states provide the first numerical estimate of the critical
power for collapse $\PCrit=\norm{R}^2_2$ in the two-dimensional and
three-dimensional cases, see Section~\ref{sec:critical_power}.
Direct simulations of the critical BNLS suggest that the constant~$\PCrit$ in
Theorem~\ref{thrm:GE_critical} is optimal.

In Section~\ref{sec:rigorous} we use rigorous analysis to study the critical
BNLS~\eqref{eq:CBNLS}.
The blowup rate is shown to be lower-bounded by a quartic root, i.e.,
$
	\norm{\Delta\psi}_2^{-1/2} \leq C\left( \TCrit-t \right)^{1/4}.
$
The corresponding bound for the critical NLS is a square root, i.e.,
$
	\norm{\nabla\psi}_2^{-1} \leq C\left( \TCrit-t \right)^{1/2}.
$
We then prove that singular solutions converge to a self-similar profile
strongly in~$L^{2+2\sigma}$, for any~$\sigma$ in the $H^2$-subcritical
regime~\eqref{eq:admissible-range}.
This implies that the singular solutions have the power-concentration property,
whereby the amount of power that enters the singularity point is at least~$\PCrit$.
These rigorous results mirror those of the critical NLS.

Let us denote the location of the maximal amplitude of a radially-symmetric
solution by
\[	\rmax(t) = \displaystyle \arg \max_r|\psi|. 	\]
Singular solutions are called ``{\em peak-type}'' when $\rmax(t)\equiv0$
for~$0\le t \le T_c$, and ``{\em ring-type}'' when~$\rmax(t)>0$ for~$0\le t<\TCrit$.
In this work, we use asymptotic analysis and numerics to find and characterize
peak-type singular solutions of the BNLS equation.
Ring-type singular solutions of the BNLS will be studied
elsewhere~\cite{Baruch_Fibich_Gavish:2009,Baruch_Fibich_Mandelbaum:2009b}.

In Section~\ref{sec:crit_peak} we use asymptotics and numerics to show that
peak-type singular solutions of the critical BNLS collapse with the
quasi self-similar profile
\[
	\psi(t,r) \sim
		\frac{1}{L^{d/2}(t)}
		R\left( \frac r{L(t)} \right)
		e^{i\int^t_0\frac{1}{L^4(t')}dt'},
		\qquad \lim_{t\to\TCrit}L(t)=0,
\]
where the self-similar profile is the ground state~$R(\rho)$.
The blowup rate is shown to be slightly faster than the quartic-root bound.
This is analogous to the critical NLS, where the blowup rate of peak-type
solutions is slightly faster than the square-root bound, due to the loglog
correction (the ``loglog law'').
It is an open question whether the correction to the BNLS blowup rate is also a
loglog term.

In Section~\ref{sec:supercrit_peak} we consider peak-type singular solutions of the
supercritical BNLS.
In this case, asymptotics and numerics show that singular solutions are of the
quasi self-similar form
\begin{equation}	\label{eq:intro_SS_supercrit}
\begin{gathered}
	\psi(t,r) \sim 
		\frac{1}{L^{2/\sigma}(t)}
		B\left( \frac r{L(t)} \right)
		e^{i \int \frac{1}{L^4(t')}dt'}
		,\qquad
		\lim_{t\to\TCrit}L(t)=0,
\end{gathered}
\end{equation} 
the blowup rate of~$L(t)$ is exactly a quartic-root, and the self-similar
profile~$B(\rho)$ is different from the ground-state.
Rather, as in the supercritical NLS, the self-similar profile~$B$ is the
zero-Hamiltonian solution of a nonlinear eigenvalue problem.
Although $B(\rho)$ is not in~$L^2$, it can be the self-similar profile of a
collapsing~$H^2$ solution, since the collapsing solution is ``only'' quasi
self-similar.

Section~\ref{sec:num_meth} presents the numerical methods.
The computations of singular BNLS solutions that focus by factors of~$10^8$ and
more, necessitated the usage of adaptive grids.
We develop a modified version of the Static Grid Redistribution
method~\cite{Ren-00,SGR-08}, which is more convenient for the biharmonic problem.
Calculating the BNLS standing waves in multi-dimensions is done using the Spectral
Renormalization Method.

\subsection{\label{ssec:discussion}Discussion}

In this study, we use rigorous theory, asymptotic theory and numerics to analyze
singular solutions of the BNLS.
All the results presented in this work mirror those of the NLS, ``up to a change by
a factor of~$2$'' in the blowup rate~($1/2\longrightarrow1/4$), in the critical
value of~$\sigma$ ($2/d\longrightarrow4/d$), et cetera.
However, several key features of NLS theory are still missing from BNLS theory.
First, the ``Variance Identity'' for the BNLS cannot be used to prove that singular
solutions exist.
Second, the critical NLS is invariant under the pseudo-conformal
(``lens-transformation'') symmetry which can also be used to construct explicit
singular solutions.
At this time, it is unknown whether an analogous identity for the critical BNLS
exists.
Third, in critical NLS theory, the self-similar profile is known to possess a
quadratic radial phase term, i.e., \[
	\psi(t,r) \sim
	\frac{1}{L^{d/2}(t)}
	R\left( \frac r{L(t)} \right)
	e^{
		i\tau(t) +
		i \frac{L_t}{4L}r^2
	}.
\]
This term represents the focusing of the solution towards~$r=0$, and plays a key
role in the rigorous and asymptotic theory of the critical NLS.
At this time, we do not know the analogous radial phase term for the critical BNLS.

Finally, we note that a similar ``up to a factor of~$2$'' connection exists
between singular solutions of the nonlinear heat equation, see~\cite{GigaKohn_85},
and the biharmonic nonlinear heat equation,
see~\cite{Budd-Williams-Galaktionov_2004}.
For example, the~$L^\infty$ norm of singular solutions blows up as~$
	\left( \TCrit-t \right)^{-1/2}
$
for the nonlinear heat equation, and as~$
	\left( \TCrit-t \right)^{-1/4}
$ for the biharmonic heat equation.
The ``similarity up to a factor of~$2$'', however, is not perfect.
For example, the self-similar spatial variable is~$
	r/\sqrt{( \TCrit-t ) \log(\TCrit-t) }
$ for the nonlinear heat equation, and~$
	r/\sqrt[4]{ \TCrit-t }
$ for the biharmonic nonlinear heat equation.
Another difference between the equations is that singular solutions are
{\em asymptotically} self-similar for the nonlinear heat equation, and truly
self-similar for the biharmonic heat equation.
In contrast, the NLS possesses self-similar singular solutions, whereas for the BNLS
it is unknown whether singular solutions are truly, or only asymptotically,
self-similar.


\section{\label{sec:Invariance}Invariance}

The BNLS~\eqref{eq:BNLS} is the Euler-Lagrange equation of the action integral \[
	S = \int \mathcal{L} d\bvec{x} dt,
\] where~$\cal L$ is the  Lagrangian density
\begin{equation}	\label{eq:Lagrangian}
	\mathcal{L}\left(\psi, \psi^*, \psi_t, \psi_t^*,
		\Delta\psi, \Delta\psi^* \right)
		= \frac{i}{2} \left(
			\psi_t\psi^* - \psi_t^*\psi
		\right)
		- \left|\Delta \psi\right|^2
		+ \frac{1}{1+\sigma} \left|\psi\right|^{2(\sigma+1)}.
\end{equation}	
Therefore, the conserved quantities of the BNLS can be found using Noether
theorem, see Appendix~\ref{app:Noether}.
As in the standard NLS, invariance of the action integral under
phase-multiplications~$\psi\mapsto e^{i\delta}\psi$ implies conservation of the
``power'' (~$L^2$ norm ), i.e,. \[
	P(t) \equiv P(0),\qquad 
	P(t) = \norm{\psi(t)}_2^2.
\]
Similarly, invariance under temporal translations~$t\mapsto t+\delta t$ implies
conservation of the Hamiltonian
\begin{equation}	\label{eq:Hamiltonian_conservation}
	H(t) \equiv H(0),\qquad 
	H[\psi(t)]  =  
		\norm{\Delta\psi}_2^2 
			-\frac{1}{\sigma+1} \norm{\psi}_{2(\sigma+1)}^{2(\sigma+1)},
\end{equation}
and invariance under spatial translations~$\bvec{x}\mapsto\bvec{x}+\delta\bvec{x}$
implies conservation of the linear momentum, i.e., \[
	\bvec{P}(t) \equiv\bvec{P}(0),\qquad 
	\bvec{P}(t) = \int \text{Im}\left\{\psi^*\nabla\psi\right\}d\bvec{x}\,.
\]
In the critical case~$\sigma\cdot d=4$, the action integral is also invariant under
the dilation transformation~$
	\psi(t,\bvec{x}) \to \lambda^{d/2}\psi(\lambda^4 t,\lambda\bvec{x}).
$
The corresponding conserved quantity is 
\begin{equation}	\label{eq:J_dilation}
	J(t) \equiv J(0),\qquad 
	J(t) = \int \bvec{x} \cdot \text{Im}\left\{
			\psi^*\nabla\psi
		\right\}d\bvec{x} + 4 t H.
\end{equation}

\subsection{\label{ssec:varid}Towards a variance identity}

We recall that the action integral of the critical NLS~\eqref{eq:CNLS} is
invariant under the dilation transformation \[
	\psi_\text{NLS}(t,\bvec{x}) \mapsto
		\lambda^{d/2}\psi_\text{NLS}(\lambda^2 t,\lambda\bvec{x}) .
\]
The corresponding conserved quantity is 
\begin{equation}	\label{eq:J-dilation-NLS}
	J_\text{NLS}(t)
		\equiv J_\text{NLS}(0),\qquad 
	J_\text{NLS}
		= \int \bvec{x} \cdot \text{Im}\left\{
			\psi^*\nabla\psi
		\right\}d\bvec{x} - 2 t H_\text{NLS}\,.
\end{equation}
In addition, the integral term~$
	\int \bvec{x} \cdot \text{Im}\left\{ \psi^*\nabla\psi \right\}d\bvec{x} 
$ is the time-derivative of the variance, i.e., \[
	\frac{d}{dt}V_\text{NLS}
		= 4 \int \bvec{x} \cdot \text{Im}
			\left\{	\psi^*\nabla\psi \right\}d\bvec{x} ,\qquad 
	V_\text{NLS}(t) = \int \abs{\bvec{x}}^2 \abs{\psi}^2 d\bvec{x}\,.
\]
Therefore, it follows that \[
	\frac{d^2}{dt^2} V_\text{NLS} = 8H_\text{NLS}.
\]
In the supercritical NLS, the second derivative of the variance is not related
to this conservation law.
Nevertheless, direct differentiation shows that 
\[ \begin{gathered}
	\frac{d}{dt}V_\text{NLS}  
	= 4 \int \bvec{x} \cdot \text{Im}
			\left\{	\psi^*\nabla\psi \right\}d\bvec{x}\,, \\
	\frac{d}{dt} \int \bvec{x} \cdot \text{Im}
		\left\{	\psi^*\nabla\psi \right\}d\bvec{x} 
	= 2H_\text{NLS} - 
		\frac{\sigma d-2}{(\sigma+1)} \norm{\psi}_{2(\sigma+1)}^{2(\sigma+1)}\,.
\end{gathered}\]
Therefore
\begin{equation}	\label{eq:VID-NLS}
	\frac{d^2}{dt^2}V_\text{NLS} = 8H_\text{NLS} - 
		4\frac{\sigma d-2}{(\sigma+1)} \norm{\psi}_{2(\sigma+1)}^{2(\sigma+1)}.
\end{equation}
Since~$V_\text{NLS}\geq0$, the variance identity~\eqref{eq:VID-NLS} shows that
solutions of the critical and supercritical NLS, whose Hamiltonian is negative,
become singular in a finite time~\cite{Talanov-70}.

We next extend the analogy between~\eqref{eq:J_dilation}
and~\eqref{eq:J-dilation-NLS} to the non-critical case.
In the case of the BNLS, direct differentiation shows that 
\[
	\frac{d}{dt} \int \bvec{x} \cdot 
		\text{Im}\left\{\psi^*\nabla\psi\right\}d\bvec{x}
		= 4H - \frac{\sigma d-4}{2(\sigma+1)}
			\norm{\psi}_{2(\sigma+1)}^{2(\sigma+1)}.
\]
Therefore, if we define 
\begin{equation}	\label{eq:V_BNLS}
	V_\text{BNLS}(t) =
		V_\text{BNLS}(0) 
		+ \int_{s=0}^{t}\left( 
			\int \bvec{x} \cdot \text{Im}\left\{
				\psi^*(s,\bvec{x})\nabla\psi
			\right\}d\bvec{x}
		\right)ds, 
\end{equation}
where~$V_\text{BNLS}(0)~$ is a positive constant, we get the BNLS variance identity \[
	\frac{d^2}{dt^2}V_\text{BNLS}
		= 4H - \frac{\sigma d-4}{2(\sigma+1)}
			\norm{\psi}_{2(\sigma+1)}^{2(\sigma+1)}.
\]
In order to use this identity to prove singularity formation, however, one must
show that~$V_\text{BNLS}$, as defined by~\eqref{eq:V_BNLS}, has to remain positive.
Direct integration by parts gives that
\[
		\int \bvec{x} \cdot \text{Im}\left\{\psi^*\nabla\psi\right\}d\bvec{x}
		= \frac1{4(d+2)} 
			\int |\bvec{x}|^4 \cdot \text{Im} \left\{
				\nabla\psi^* \Delta\nabla \psi 
			\right\}d\bvec{x} 
			+ \frac1{16(d+2)} \left( 
				\int |\bvec{x}|^4 |\psi|^2 d\bvec{x}
			\right)_t.
\]
While the second term on the RHS is a temporal derivative of a positive-definite
quantity, the first term on the RHS is not.
Therefore, it remains an open question whether~$V_\text{BNLS}$ has to be
positive.

\section{\label{sec:stationary}Numerical calculation of standing waves}

The BNLS equation~\eqref{eq:BNLS} admits the standing-wave solutions ~$$
\psi(t,\bvec{x})=\lambda^{2/\sigma}e^{i\lambda^4t} R(\lambda\bvec{x}).
$$
The equation for the standing-wave profile is
\begin{equation}	\label{eq:StandBNLS}
	-\Delta^2 R(\bvec{x})-R+|R|^{2\sigma}R=0,
\end{equation}
where~$R\in H^2$.
For example, in one dimension, equation~\eqref{eq:StandBNLS} is given by
\begin{equation}	\label{eq:1DStandBNLS}
 -R_{xxxx}(x)-R+|R|^{2\sigma}R=0,
\end{equation}
and in two dimensions by 
\begin{equation}	\label{eq:2DStandBNLS}
 -\left(  R_{xxxx}(x,y)+2R_{xxyy}+R_{yyyy}\right)-R+|R|^{2\sigma}R=0.
\end{equation}
\begin{subequations}	\label{eqs:radial_stationary_state}
	If we impose radial symmetry, eq.~\eqref{eq:StandBNLS} reduces to 
	\begin{equation}	\label{eq:radial_stationary_stateBNLS}
		-\Delta^2_rR(r)-R+|R|^{2\sigma}R = 0,
	\end{equation}
	where~$\Delta_r^2$ is given in~\eqref{eq:radial_bi_Laplacian}.
	At~$r=0$, all the odd derivatives vanish, and so the solution
	of~\eqref{eq:radial_bi_Laplacian} is subject to the boundary conditions 
	\begin{equation}	\label{eq:radial_stationary_stateBCs}
			R^\prime(0)=R^{\prime\prime\prime}(0)=R(\infty)=R^\prime(\infty)=0.
	\end{equation}
\end{subequations}

The solution of eq.~\eqref{eqs:radial_stationary_state} was computed numerically in the
one-dimensional case in~\cite{Fibich_Ilan_George_BNLS:2002} as follows.
In the~$1D$ case, eq.~\eqref{eqs:radial_stationary_state} can be integrated
once, yielding an explicit relation between~$R(0)$ and~$R^{\prime\prime}(0)$.
This parameter-reduction enables the usage of a one-parameter shooting approach.
Unfortunately, such a parameter reduction is not possible in higher dimensions.
Therefore, in multi-dimensions we compute the ground-states using the Spectral
Renormalization method (SRM), which was introduced by Petviashvili
in~\cite{Petviashvili:1976}, and more recently by Albowitz and Musslimani
in~\cite{Ablowitz-Musslimani-SLSR:2005}.
See Section~\ref{ssec:SRM} for further details.

\begin{figure}
	\begin{center}
	\subfloat[$d=1~(\sigma=4)$]{\label{fig:SRM_1D}%
		\includegraphics[angle=-90,clip,width=0.35\textwidth]{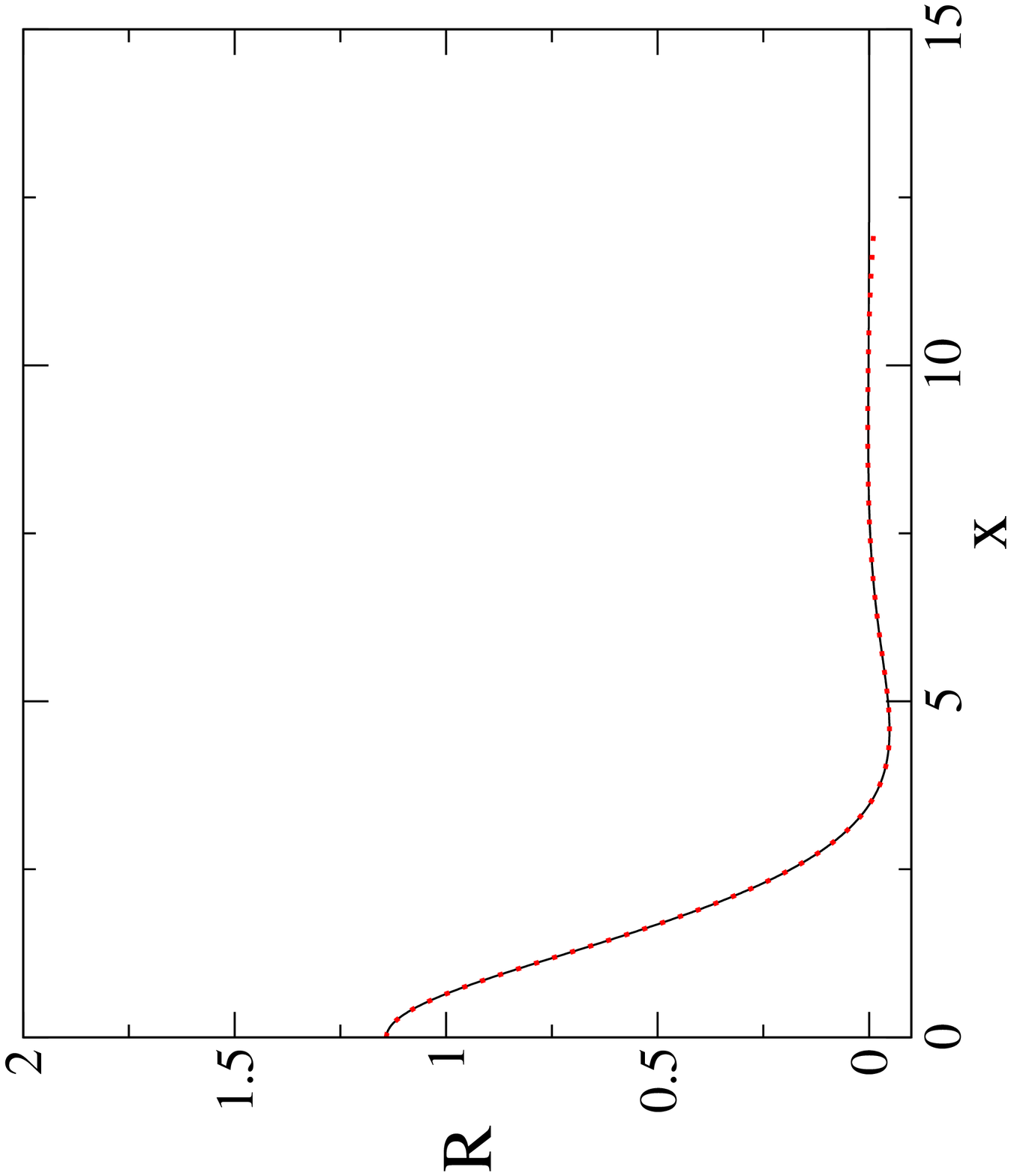} 
	}\hspace{-0.051\textwidth}
	\subfloat[$d=2~(\sigma=2)$]{\label{fig:SRM_2D}%
		\includegraphics[angle=-90,clip,width=0.35\textwidth]{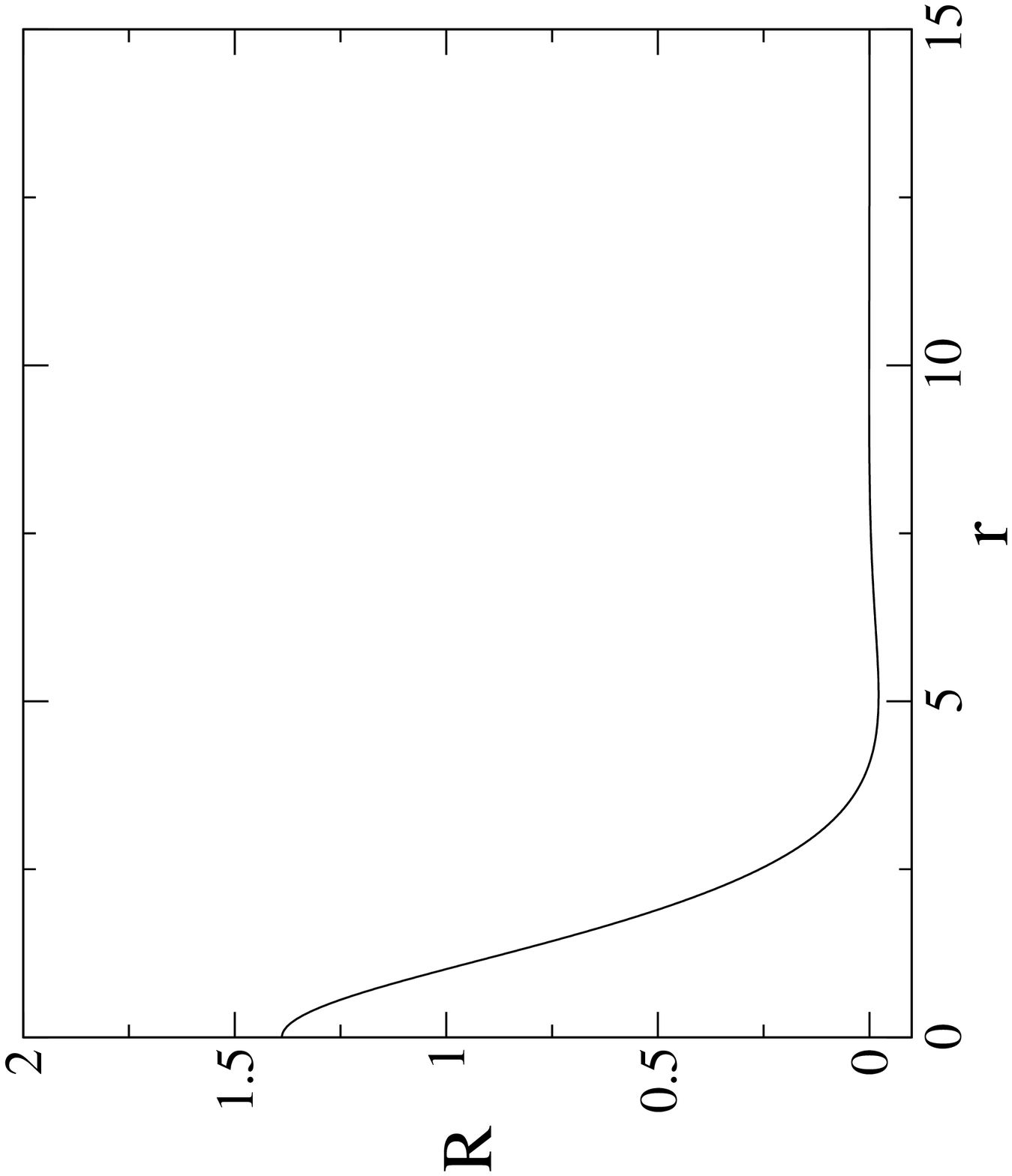} 
	}\hspace{-0.051\textwidth}
	\subfloat[$d=3~(\sigma=4/3)$]{\label{fig:SRM_3D}%
		\includegraphics[angle=-90,clip,width=0.35\textwidth]{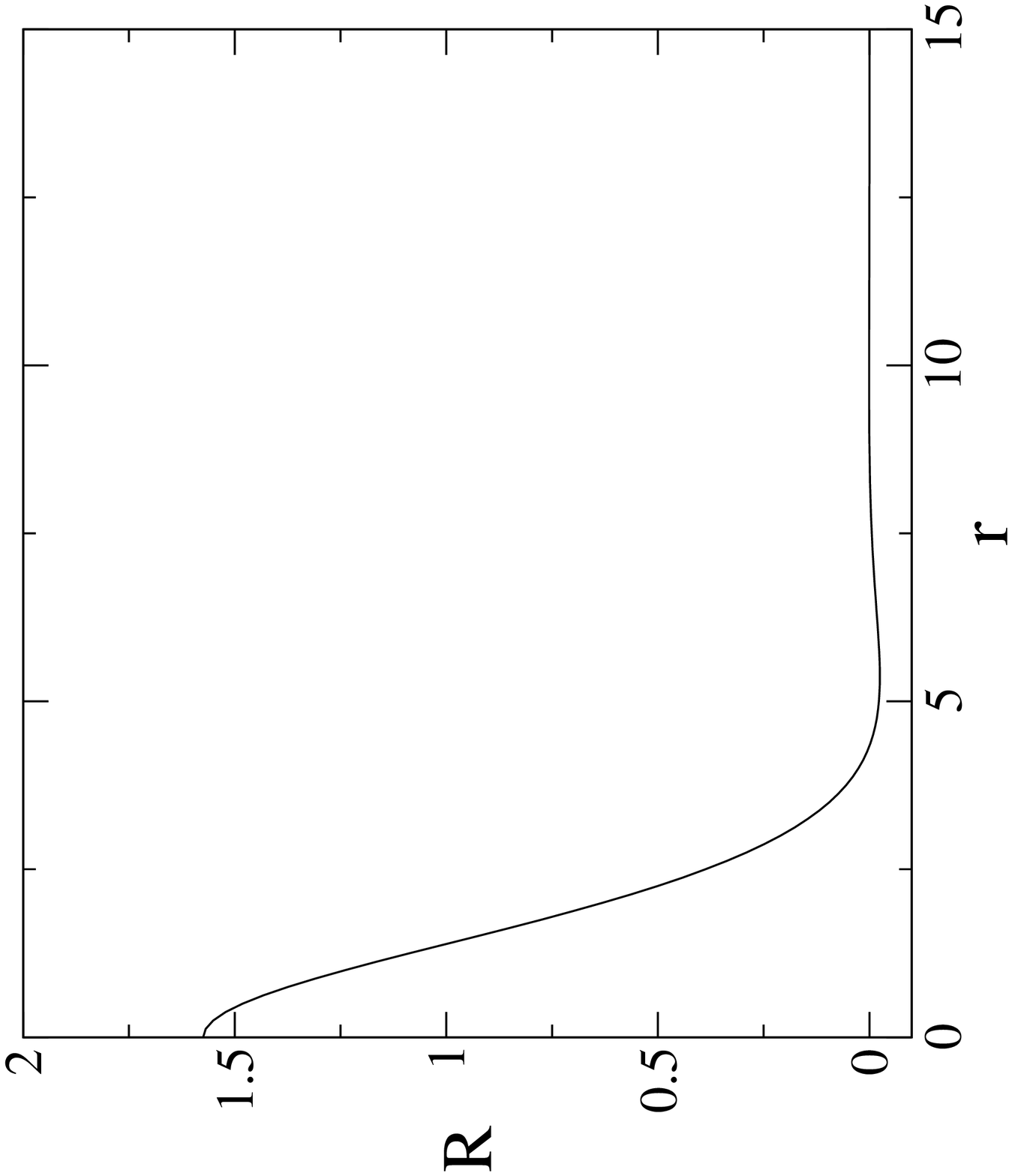} 
	}

	\mycaption{\label{fig:SRM} 
		Numerical solution of eq.~\eqref{eq:crit_stationary_state}, using the
		SRM method (solid line).
		Red dotted line in (A) is the solution computed
		in~\cite{Fibich_Ilan_George_BNLS:2002} using the shooting method. 
	}
	\end{center}
\end{figure}
In Figure~\ref{fig:SRM} we display the results in the critical case, i.e., the
ground states of 
\begin{equation}	\label{eq:crit_stationary_state}
	-\Delta^2 R -R + |R|^{8/d}R = 0\,.
\end{equation}
Figure~\ref{fig:SRM_1D} displays the ground-state of eq.~\eqref{eq:StandBNLS} in
the critical~$1D$ case, as calculated by the SRM.
The solution is in excellent agreement with the solution computed 
in~\cite{Fibich_Ilan_George_BNLS:2002} using the shooting method.
Figure~\ref{fig:SRM_2D} and Figure~\ref{fig:SRM_3D} display the ground state in the
critical~$2D$ and~$3D$ cases.
We note that, while the SRM method that we use does not enforce radial symmetry, the
calculated ground states for~$d=2$ and~$d=3$ are radially symmetric (data not shown).
As noted in~\cite{Fibich_Ilan_George_BNLS:2002}, the ground-states of the BNLS
are non-monotonic in~$r$ and change their sign, in contradistinction with the
ground-states of the NLS which are monotonically-decreasing and strictly
positive.

\section{\label{sec:critical_power}Critical power for collapse }

Theorem~\ref{thrm:GE_critical} shows that the critical power for collapse in the
critical BNLS~\eqref{eq:CBNLS} is~$\BPCrit=\norm{R}_2^2$, when~$R$ is the
ground-state of~\eqref{eq:crit_stationary_state}.
The computation of~$R$, see Section~\ref{sec:stationary}, allows for the
numerical calculation of the critical power~$\BPCrit$.
The case~$d=1$ was found in~\cite{Fibich_Ilan_George_BNLS:2002} to be \[
	\BPCrit(d=1)=\int_{x=-\infty}^{\infty}\abs{R(x)}^2dx \approx 2.9868\,.
\]
Using the calculated ground state in the two-dimensional case, see
Figure~\ref{fig:SRM_2D}, we now calculate the critical power in the
two-dimensional case, giving \[
	\BPCrit(d=2) = \iint_{x,y=-\infty}^{\infty}|R(x,y)|^2dxdy \approx 13.143\,.
\]
Similarly, using the calculated ground state in the~$3D$ case, see
Figure~\ref{fig:SRM_3D}, gives 
\[
	\BPCrit(d=3) = \iiint_{x,y,z=-\infty}^{\infty}|R(x,y,z)|^2dxdydz \approx 44.88\,.
\]

\begin{figure}
\centering
	\subfloat[$d=1, \sigma=4$]{%
		\includegraphics[angle=-90,clip,width=0.4\textwidth]%
			{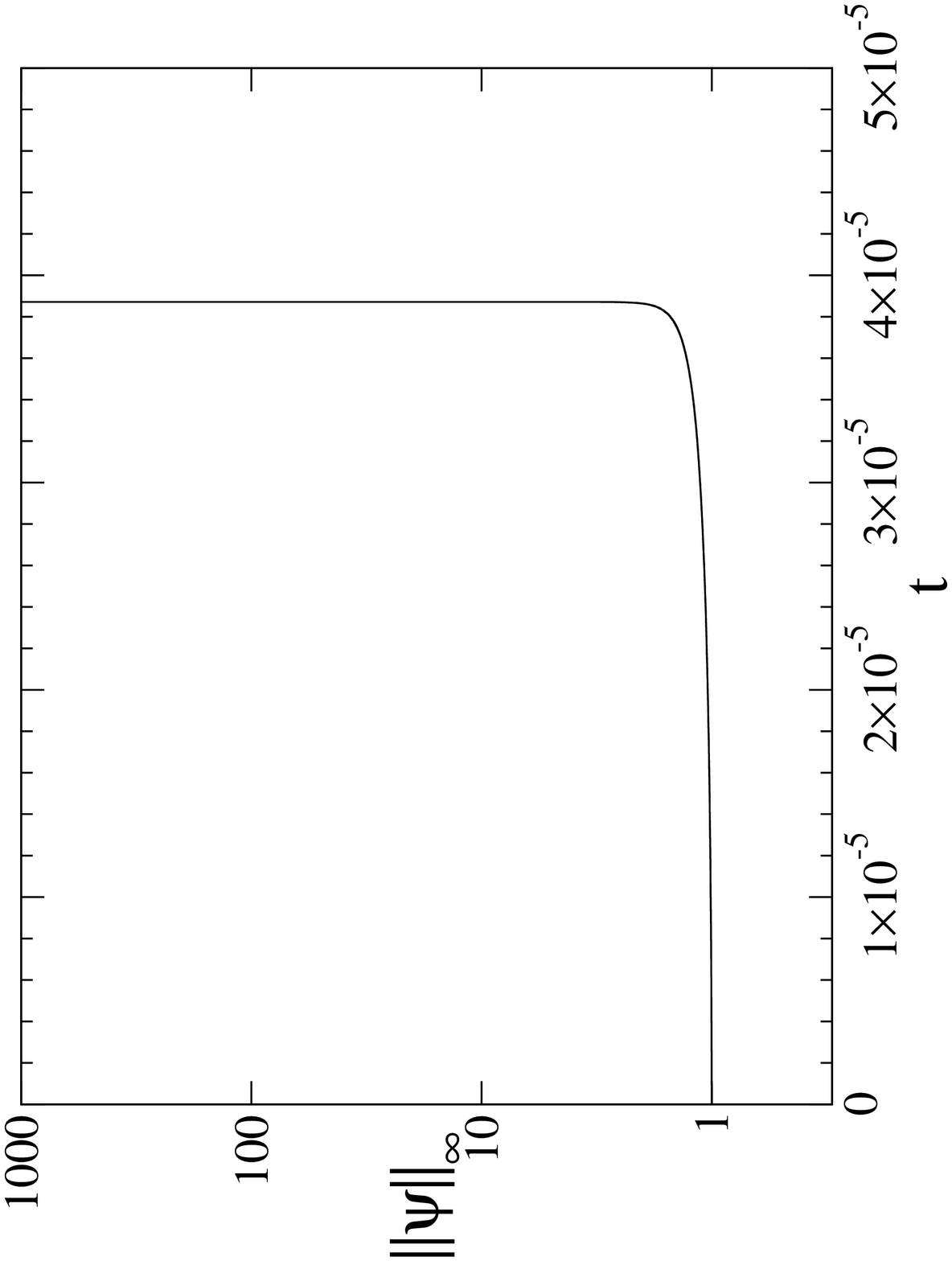} 
	}
	\subfloat[$d=2, \sigma=2$]{%
		\includegraphics[angle=-90,clip,width=0.4\textwidth]%
			{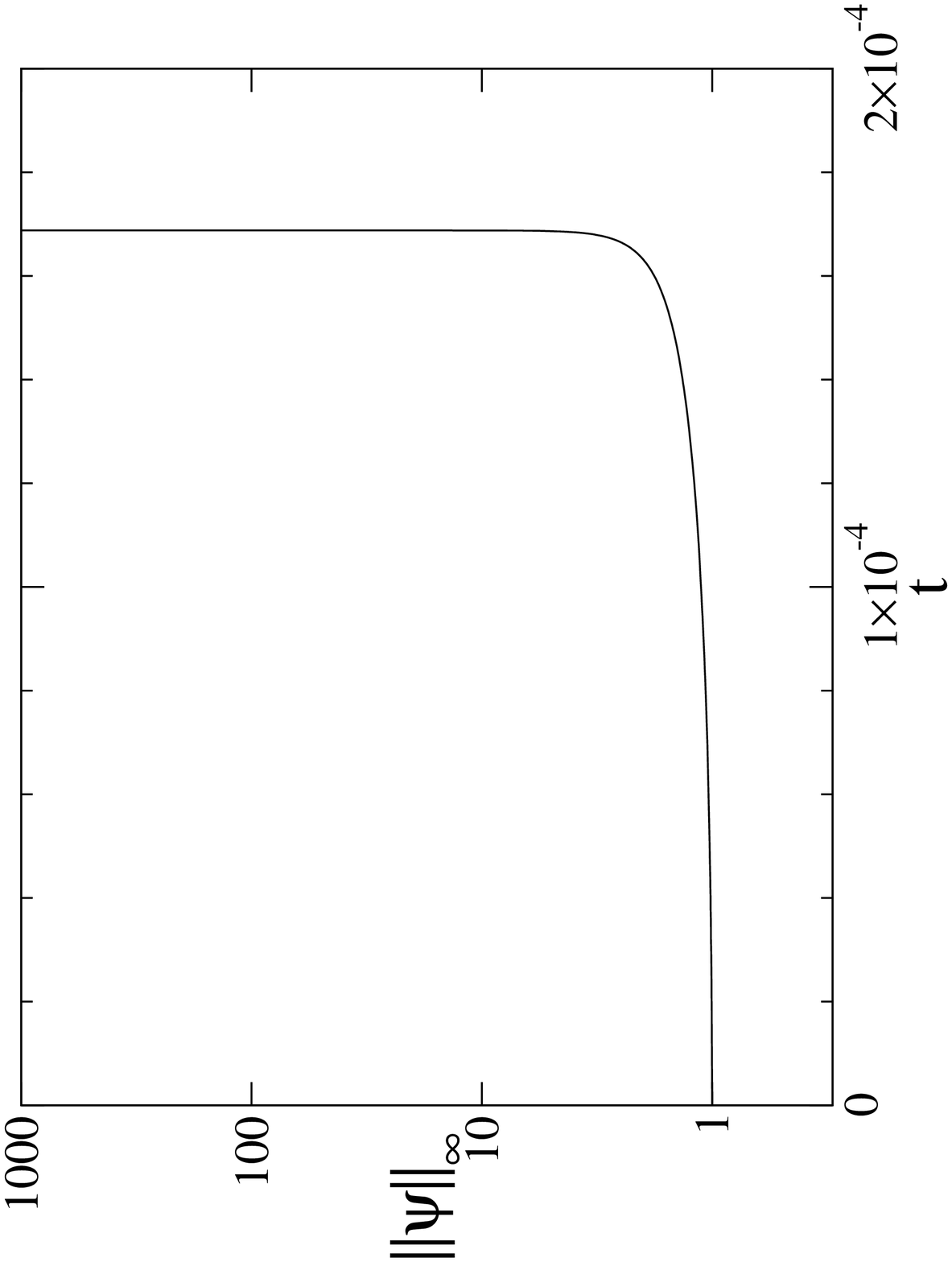} 
	}

	\mycaption{\label{fig:peak_RIC}
		Collapsing solutions of the critical BNLS~\eqref{eq:radial_CBNLS} with the
		perturbed ground-state initial condition 
		$\psi_0(\bvec{x})=1.001\cdot R(\bvec{x})$.
	}
\end{figure}

We now ask whether Theorem~\ref{thrm:GE_critical} is sharp, in the sense that
for any~$\varepsilon>0$, there exists an initial condition~$\psi_0\in H^2$ such
that~$\|\psi_0\|_2^2\le(1+\varepsilon)\BPCrit$ and the corresponding solution of
the critical BNLS becomes singular.
As noted, at present there is no proof that solutions of the BNLS can become
singular.
Therefore, in particular, it is unknown whether Theorem~\ref{thrm:GE_critical}
is indeed sharp.
Hence, we will explore this issue numerically.

We recall that in the critical NLS, the necessary condition for collapse
$\|\psi\|_2^2\ge\PCrit^\text{NLS}:=\|R^\text{NLS}\|_2^2$  is sharp, since 
for any~$\varepsilon>0$ the initial
condition~$\psi_0=(1+\varepsilon)R^\text{NLS}$ becomes singular in a finite
time~\cite{Weinstein-83}.
Therefore, we now check numerically whether for~$0<\varepsilon\ll1$, the
solution of the critical BNLS with the initial condition 
$\psi_0=\left.(1+\varepsilon)R(\bvec{x})\right.$ becomes singular.
To do this, we solve the one-dimensional and two-dimensional critical BNLS
equations with the initial condition~$\psi_0=\left.1.001\cdot R(\bvec{x})\right.$,
see Fig~\ref{fig:peak_RIC}.
In both cases, the solution appears to blow up, suggesting that
Theorem~\ref{thrm:GE_critical} is sharp.
See Section~\ref{ssec:SGR} for the numerical methodology used for solving the
BNLS.

We also note that in the critical NLS, if the initial condition is different from
the ground state, the power input required for collapse is strictly larger
than~$\PCrit^\text{NLS}$, see~\cite{critical-00}.
This the case also in the BNLS.
For example, in the one-dimensional critical BNLS with a Gaussian shaped initial
condition~$\psi_0=C\cdot e^{-r^2}$, the input power required for collapse is
strictly larger than~$1.003\cdot\PCrit$, see~\cite{Fibich_Ilan_George_BNLS:2002}.
In the two-dimensional critical BNLS with a Gaussian shaped initial condition,
the input power required is strictly larger than~$1.001\cdot\PCrit$,
see~\cite{Elad_thesis}.

\section{\label{sec:rigorous}Blowup rate, blowup profile, and power concentration
(critical case)}

\subsection{\label{ssec:low-bound}Lower-bound for the blowup rate}

In~\cite{Cazenave-88}, Cazanave and Weissler proved that the blowup rate for
singular solutions of the critical NLS~\eqref{eq:CNLS} is not slower than a
square-root, i.e., that~$\norm{\nabla\psi}_2\geq K(\TCrit-t)^{-1/2}$.
The analogous result for the critical BNLS is as follows:

\begin{thrm}	\label{thrm:low-bound} 
Let~$\psi$ be a solution of the critical BNLS~(\ref{eq:CBNLS}) that becomes 
singular at~$t=\TCrit<\infty$, and let~$
	\LN(t) = \norm{\Delta\psi}_2^{-1/2}  
$.
Then,~$\exists K=K(\norm{\psi_0}_2)>0$ such that  \[
	\LN(t) \leq  K(\TCrit-t)^{1/4},
	\qquad 0\leq t\leq\TCrit.
\] 

\begin{proof}
We follow the proof given by Merle~\cite{Merle-96} for the critical NLS.
For a fixed~$t$,~$0\leq t<\TCrit$, let us define
\[
	\psi_1(s,\bvec{x}) =
		\LN^{d/2}\psi(t+s\cdot \LN^4,\,\bvec{x}\cdot \LN).
\]
Then,~$\psi_1$ is defined for~$
	t+\LN^4s<\TCrit\iff s<S_c=\LN^{-4}(t)\cdot(\TCrit-t)
$, and satisfies the BNLS equation  \[
	i\partial_{s} \psi_1 + \Delta^2\psi_1 + 
	\left|\psi_1\right|^{8/d}\psi_1 = 0.
\]
Since \[
	\norm{\Delta\psi_1}_2^2 = 
	\LN^4 \norm{ 
		\Delta \psi(t+s\cdot \LN^4,\,\bvec{x}\cdot \LN) 
	}_2^2,
\]
this implies that~$
	{ \lim_{s\to S_c}}\norm{\Delta\psi_1}_2^2=\infty
$, i.e., that~$\psi_1(s)$ becomes singular as~$s\to S_C$.
In addition, 
\begin{subequations}	\label{eqs:psi1_0_bounds} 
	\begin{equation}	\label{eq:Delta_psi1_bound}
		\norm{\Delta\psi_1(s=0,\bvec{x})}_2^2  =
			\LN^4\norm{\Delta\psi(t,\bvec{x})}_2^2=1.
	\end{equation}

	From the definition of~$\psi_1$ and power conservation it follows that
	\begin{equation} \label{eq:psi1_bound}
		\norm{\psi_1(s=0,\bvec{x})}_2^2 
		=\norm{\psi(t,\bvec{x})}_2^2 
		= \norm{\psi_0(\bvec{x})}_2^2.
	\end{equation}
	Using equations (\ref{eq:Delta_psi1_bound}) and (\ref{eq:psi1_bound})
	and the Cauchy-Schwartz inequality gives 
	\begin{equation}
		\norm{\nabla\psi_1(s=0,\bvec{x})}_2^2  \leq
			\norm{\psi_1(s=0,\bvec{x})}_2 \cdot 
				\norm{\Delta\psi_1(s=0,\bvec{x})}_2
			= \norm{\psi_0(\bvec{x})}_2.
	\end{equation}
\end{subequations}

Together, the three formulae~(\ref{eqs:psi1_0_bounds}) imply that for any fixed
$t\in[0,\TCrit),$ 
\begin{equation}	\label{eq:psi1_bound_combined} 
	\norm{\psi_1(s=0,\bvec{x})}_{H^2}^2 
	\leq \norm{\psi_0}_2^2 + \norm{\psi_0}_2 + 1.
\end{equation}
In other words, for each~$t$, the initial~$H^2$ norm of~$\psi_1$ is bounded by a
function of~$\norm{\psi_0}_2$.
Specifically, this bound is independent of~$t$. 
From the local existence theory~\cite{Ben-Artzi-00},~$\psi_1$ exists in 
$s\in[0,S_M(t)]$, where~$
	S_M = S_M\left(\norm{\psi_1(s=0,\bvec{x})}_{H^2}\right).
$
Therefore, it follows from~\eqref{eq:psi1_bound_combined} that~$S_M$ depends on
$\norm{\psi_0}$, but is independent of~$t$.

Since~$\psi_1$ blows up at~$S_c$ we have that \[
	S_M  \leq  S_c(t)=\LN^{-4}(t)\cdot(\TCrit-t),
\] from which the result follows.
\end{proof}
\end{thrm}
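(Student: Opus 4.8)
The plan is to follow Merle's rescaling strategy for the critical NLS~\cite{Merle-96}, exploiting the dilation symmetry of the critical BNLS~\eqref{eq:CBNLS} together with the uniform local existence time furnished by~\cite{Ben-Artzi-00}. Fix $t\in[0,\TCrit)$ and abbreviate $\LN=\LN(t)$, noting that $\LN$ is finite and positive since $\norm{\Delta\psi}_2$ is finite on $[0,\TCrit)$. Using the critical scaling $\psi(t,\bvec{x})\mapsto L^{d/2}\psi(L^4t,L\bvec{x})$, I would define the renormalized solution
\[
	\psi_1(s,\bvec{x}) = \LN^{d/2}\,\psi(t + s\LN^4,\,\bvec{x}\LN),
\]
which again solves the critical BNLS in the variable $s$ (immediate from the choice of exponents) and is defined for $s<S_c(t)=\LN^{-4}(\TCrit-t)$.

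Next I would record three norm identities at $s=0$. By the very definition of $\LN$ one has $\norm{\Delta\psi_1(s{=}0)}_2^2=\LN^4\norm{\Delta\psi(t)}_2^2=1$; power conservation gives $\norm{\psi_1(s{=}0)}_2^2=\norm{\psi(t)}_2^2=\norm{\psi_0}_2^2$; and the intermediate $\dot H^1$ seminorm is controlled by the interpolation inequality $\norm{\nabla u}_2^2\le\norm{u}_2\norm{\Delta u}_2$ (a one-line Cauchy--Schwarz estimate on the Fourier side), yielding $\norm{\nabla\psi_1(s{=}0)}_2^2\le\norm{\psi_0}_2$. Summing the three pieces, the initial $H^2$ norm of the renormalized solution is bounded by $\norm{\psi_0}_2^2+\norm{\psi_0}_2+1$, a quantity depending only on $\norm{\psi_0}_2$ and, crucially, independent of the reference time $t$.

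The decisive step is to invoke the local well-posedness theory of Ben-Artzi, Koch and Saut: since $\sigma=4/d$ lies in the $H^2$-subcritical range~\eqref{eq:admissible-range}, the local existence time depends on the data only through its $H^2$ norm. Combined with the uniform bound just obtained, this produces an existence window $S_M=S_M(\norm{\psi_0}_2)>0$ that is independent of $t$. On the other hand, because $\norm{\Delta\psi_1(s)}_2^2=\LN^4\norm{\Delta\psi(t+s\LN^4)}_2^2\to\infty$ as $s\to S_c(t)$, the renormalized solution itself becomes singular at $s=S_c(t)$. The solution cannot exist beyond its blowup time, so $S_M\le S_c(t)=\LN^{-4}(t)(\TCrit-t)$; rearranging gives $\LN^4(t)\le S_M^{-1}(\TCrit-t)$, and the theorem follows with $K=S_M^{-1/4}$.

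I expect the only genuinely delicate point to be the claim that the existence time in~\cite{Ben-Artzi-00} may be taken to depend on the data purely through $\norm{\cdot}_{H^2}$; this is the standard feature of subcritical contraction-mapping schemes, but it must be extracted carefully. The subtlety on the estimate side is that the scaling normalizes only the top-order ($\dot H^2$) seminorm to $1$, so closing the argument requires controlling the full $H^2$ norm uniformly, which is exactly where power conservation (for the $L^2$ piece) and interpolation (for the $\dot H^1$ piece) enter. Once these are in place the remainder is a clean scaling calculation.
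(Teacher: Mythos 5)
Your proposal is correct and reproduces the paper's argument essentially verbatim: the same Merle-style rescaling $\psi_1(s,\bvec{x})=\LN^{d/2}\psi(t+s\LN^4,\bvec{x}\LN)$, the same three norm identities at $s=0$ (with the $\dot H^1$ piece controlled by Cauchy--Schwarz interpolation), and the same conclusion via the $t$-independent local existence time $S_M$ from~\cite{Ben-Artzi-00} compared against the blowup time $S_c(t)$. No substantive differences from the paper's proof.
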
 	

\subsection{\label{ssec:ss-profile}Convergence to a quasi self-similar 
	blowup profile}

In~\cite{Weinstein-89}, Weinstein showed that the collapsing core of all
singular solutions of the critical NLS approaches a self-similar profile.
We now prove the analogous result for the critical BNLS:

\begin{thrm}	\label{thrm:self-similarity}
Let~$d\geq 2$ and let~$\psi(t,r)$ be a solution of the radially-symmetric 
critical BNLS~(\ref{eq:CBNLS}) with initial conditions~$
	\psi_0(r)\in H^2_{\rm radial}
$, that becomes singular at~$t=\TCrit<\infty$.
Let~$ \LN(t) = \norm{\Delta\psi}_2^{-1/2}$ and let \[
	S(\psi)(t,r)  =  \LN^{d/2}(t)\psi(t,\, \LN(t)r).
\]
Then, for any sequence~$t'_k\to \TCrit$ there is a subsequence~$t_k$ such that 
$S(\psi)(t_k, r)\to\Psi(r)$ strongly in~$L^q$, for all~$q$ such that%
\footnote{
	In fact,~$q=2(\sigma+1)$, where~$\sigma$ is in the $H^2$-subcritical  
	regime~\eqref{eq:admissible-range}.
}%
\begin{equation}	\label{eq:thrm-self-q}
	\begin{cases}
		2<q\le\infty	&	2\leq d\leq 4, \\
		2<q<\frac{2d}{d-4} 	&	4<d .
	\end{cases}
\end{equation}%
In addition,~$\norm{\Psi}_2^2\geq \norm{R}_2^2$, where~$R$ is the ground
state of equation~\eqref{eq:stationary_state}.

\begin{proof} 
Let~$t_k\to \TCrit$ and define \[
	\phi_k(r) = S(\psi)(t_k,r)
		= \LN^{d/2}(t_k) \psi(t_k,\LN(t_k)r).
\]
From the definition of~$\phi_k$ it follows that 
\[
	\norm{\phi_k}_2^2  
		= \norm{\psi_0}_2^2, \qquad
	\norm{\Delta\phi_k}_2^2  
		= \LN^4 \Vert\Delta\psi(t_k)\Vert_2^2 = 1, \qquad
	H[\phi_k] 
		= \LN^4H[\psi(t_k)].
\]
Therefore, using Cauchy-Schwartz, \[
	\Vert \nabla\phi_k \Vert_2^2 
		\leq \norm{\phi_k}_2 \cdot 
		\norm{ \Delta\phi_k }_2
		= \norm{\psi_0}_2.
\] 
Since~$\norm{\phi_k}_{H^2}$ is bounded, it follows that there exists a
subsequence of~$\phi_k$ which converges weakly in~$H^2$ to a function
$\Psi\in H_{\rm radial}^2$. 
From the Compactness Lemma~\ref{lem:compactness}, see
Appendix~\ref{app:compactness}, it follows that~$\phi_k\to\Psi$
strongly in~$L^q$, for all~$q$ given by~\eqref{eq:thrm-self-q}.

Next, we prove that~$H[\Psi]\leq0$.
Since~$\phi_k\underset{H^2}{\weakto}\Psi$, it follows that~$
	\Delta\phi_k\underset{L^2}{\weakto}\Delta\Psi
$, and so~$
	\norm{ \Delta\Psi }_2 \leq 
		\lim_{k\to\infty} \norm{\Delta\phi_k}_2 = 1
$.
Additionally, since~$\phi_k\underset{L^q}{\to}\Psi$ for some~$q=2+2s>2$, we have
that~$
	\norm{\Psi}_2 = \lim_{k\to\infty} \norm{\phi_k}_2,
$ and so \[
	H[\Psi] \leq
		\lim_{k\to\infty} H[\phi_k]
		= \lim_{k\to\infty}\LN^4H[\psi_0] = 0.
\]

In addition, since \[
	0 = \lim_{k\to\infty} H[\phi_k]
		= \lim_{k\to\infty} \left(
			1-\frac{1}{\sigma+1}\norm{\phi_k}_{2(\sigma+1)}^{2(\sigma+1)}
		\right),
\]
it follows that~$
	{ \lim_{k\to\infty}}\norm{\phi_k}_{2(\sigma+1)}>0,
$
so~$\Psi\neq0$.
Therefore, Corollary~\ref{cor:P2H_bound}, see Appendix~\ref{app:gagli-niren},
implies that~$\norm{\Psi}_2^2\geq\norm{R}_2^2$.
\end{proof}\end{thrm}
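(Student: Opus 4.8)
The plan is to follow Weinstein's concentration--compactness argument for the critical NLS, transposed to the biharmonic setting via the dilation symmetry of the critical BNLS. The rescaling $S$ is engineered precisely so that the bi-Laplacian norm of the rescaled function is normalized to one; this is the biharmonic analogue of normalizing the gradient norm in the NLS case, and it is what makes a fixed ground state emerge in the limit.

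First I would fix a sequence $t'_k\to\TCrit$ and set $\phi_k(r)=S(\psi)(t_k,r)$. Three invariants are immediate from the definition of $S$ together with the conservation laws: power conservation gives $\norm{\phi_k}_2=\norm{\psi_0}_2$; the choice $\LN=\norm{\Delta\psi}_2^{-1/2}$ forces $\norm{\Delta\phi_k}_2=1$; and the scaling of the Hamiltonian under the critical dilation gives $H[\phi_k]=\LN^4 H[\psi(t_k)]=\LN^4 H[\psi_0]$. Interpolating the intermediate gradient norm by Cauchy--Schwarz, $\norm{\nabla\phi_k}_2^2\le\norm{\phi_k}_2\,\norm{\Delta\phi_k}_2=\norm{\psi_0}_2$, I conclude that $\{\phi_k\}$ is bounded in $H^2_{\rm radial}$, so some subsequence converges weakly in $H^2$ to a radial limit $\Psi$.

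The next step is to upgrade weak convergence to strong $L^q$ convergence on the range of $q$ in \eqref{eq:thrm-self-q}. This is where I expect the main obstacle, and where the hypotheses $d\ge 2$ and radial symmetry enter: one needs a Strauss-type compact embedding $H^2_{\rm radial}(\Real^d)\hookrightarrow\hookrightarrow L^q(\Real^d)$ for $q$ strictly between $2$ and the $H^2$-critical Sobolev exponent $2d/(d-4)$. I would isolate this as a separate compactness lemma, the biharmonic analogue of the radial compactness used by Weinstein. The subtle point is that weak $H^2$ convergence alone cannot rule out $\Psi=0$ (mass could escape to infinity), so this compactness is genuinely load-bearing and not merely cosmetic. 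With strong $L^{2(\sigma+1)}$ convergence in hand, the remainder is weak/strong semicontinuity bookkeeping.

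To identify $\Psi$ I would argue as follows. Since $\psi$ blows up, $\norm{\Delta\psi}_2\to\infty$, hence $\LN(t_k)\to 0$ and therefore $H[\phi_k]=\LN^4 H[\psi_0]\to 0$. Weak lower semicontinuity of $u\mapsto\norm{\Delta u}_2$ gives $\norm{\Delta\Psi}_2\le 1$, while strong $L^{2(\sigma+1)}$ convergence lets the nonlinear term of $H$ pass to the limit; combining these yields $H[\Psi]\le\lim_k H[\phi_k]=0$. The same two facts give $\tfrac{1}{\sigma+1}\norm{\phi_k}_{2(\sigma+1)}^{2(\sigma+1)}=1-H[\phi_k]\to 1$, so the $L^{2(\sigma+1)}$ mass of $\Psi$ is strictly positive and in particular $\Psi\neq 0$. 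Finally I would invoke the sharp biharmonic Gagliardo--Nirenberg inequality, whose optimizer is the ground state $R$ of \eqref{eq:stationary_state}: a nonzero $H^2$ function with nonpositive Hamiltonian must satisfy $\norm{\Psi}_2^2\ge\norm{R}_2^2$, the desired lower bound. The one hard input beyond the radial compactness is establishing that sharp inequality and identifying its optimal constant with $\norm{R}_2$, which I would prove separately via a variational characterization of $R$ as the minimizer of the associated Weinstein functional.
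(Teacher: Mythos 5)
Your proposal is correct and follows essentially the same route as the paper: uniform $H^2$ bounds from power conservation, the normalization $\norm{\Delta\phi_k}_2=1$, and Cauchy--Schwarz interpolation; the radial Strauss-type compact embedding of $H^2_{\rm radial}$ into $L^{2(\sigma+1)}$ to upgrade weak to strong convergence; passing to the limit in the Hamiltonian to get $H[\Psi]\le 0$ and $\Psi\neq 0$; and the sharp biharmonic Gagliardo--Nirenberg inequality with optimizer $R$ to conclude $\norm{\Psi}_2^2\ge\norm{R}_2^2$. The two auxiliary results you propose to isolate are precisely the paper's Compactness Lemma and its Corollary on the Gagliardo--Nirenberg constant.
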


\subsection{\label{ssec:power_conc}Power Concentration}

Solutions of critical NLS have the power concentration property, whereby
the amount of power that collapses into the singularity is at least 
$\PCrit^\text{NLS}=\norm{R^\text{NLS}}_2^2$, see~\cite{Weinstein-89,Merle-90}.
In what follows, we prove the analogous results for the critical BNLS.

\begin{cor}	\label{cor:power-concentration}
Let~$d\geq 2$, and let~$\psi(t,r)$ be a solution of the radially-symmetric critical
BNLS~(\ref{eq:radial_CBNLS}) that becomes 
singular at~$t=\TCrit<\infty$.
Then,~$\forall\epsilon>0$, \[
	\liminf_{t\to \TCrit} \norm{\psi(t,r)}_{L^2(r<\epsilon)}^2
	\geq  \BPCrit,
\] where~$
	\BPCrit  =  \norm{R}_2^2.
$

\begin{proof}
The result shall follow directly from Corollary~\ref{cor:power-conc-lower-bound}.
\end{proof}\end{cor}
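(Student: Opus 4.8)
The plan is to transfer the lower bound $\norm{\Psi}_2^2\geq\norm{R}_2^2$ on the self-similar profile supplied by Theorem~\ref{thrm:self-similarity} back to the physical solution, exploiting the fact that the rescaling concentrates at the origin. Write $\phi_k(r)=S(\psi)(t_k,r)=\LN^{d/2}(t_k)\,\psi(t_k,\LN(t_k)r)$. A change of variables (in radial coordinates) shows that for any fixed radius $A>0$,
\[
	\norm{\phi_k}_{L^2(r<A)}^2 = \norm{\psi(t_k)}_{L^2(r<\LN(t_k)A)}^2,
\]
so the $L^2$-mass of the rescaled profile on a ball of radius $A$ equals the mass of $\psi(t_k)$ on the \emph{physical} ball of radius $\LN(t_k)A$. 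Because $\psi$ becomes singular, $\norm{\Delta\psi}_2\to\infty$ and hence $\LN(t_k)\to0$ (this is guaranteed in particular by the blowup-rate bound of Theorem~\ref{thrm:low-bound}); thus for every fixed $\epsilon>0$ the physical ball eventually satisfies $\LN(t_k)A<\epsilon$, and its mass is captured inside $\{r<\epsilon\}$.

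The key steps, in order, are as follows. First, fix $\epsilon>0$ and choose a sequence $t'_k\to\TCrit$ along which $\norm{\psi(t'_k)}_{L^2(r<\epsilon)}^2$ converges to $\liminf_{t\to\TCrit}\norm{\psi(t,r)}_{L^2(r<\epsilon)}^2$. Second, apply Theorem~\ref{thrm:self-similarity} to extract a subsequence $t_k$ for which $\phi_k\to\Psi$ strongly in~$L^q$ for some $q>2$ in the range~\eqref{eq:thrm-self-q}, with $\norm{\Psi}_2^2\geq\norm{R}_2^2$. Third, for each fixed $A>0$, strong $L^q$ convergence with $q>2$ on the \emph{bounded} ball $\{r<A\}$ upgrades to strong $L^2$ convergence there by H\"older's inequality, so
\[
	\lim_{k\to\infty}\norm{\phi_k}_{L^2(r<A)}^2 = \norm{\Psi}_{L^2(r<A)}^2.
\]
Fourth, since $\LN(t_k)\to0$ we have $\LN(t_k)A<\epsilon$ for all large $k$, whence $\norm{\psi(t_k)}_{L^2(r<\epsilon)}^2\geq\norm{\psi(t_k)}_{L^2(r<\LN(t_k)A)}^2=\norm{\phi_k}_{L^2(r<A)}^2$. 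Passing to the limit in $k$ and then letting $A\to\infty$ by monotone convergence yields
\[
	\liminf_{t\to\TCrit}\norm{\psi(t,r)}_{L^2(r<\epsilon)}^2 \geq \norm{\Psi}_2^2 \geq \norm{R}_2^2 = \BPCrit,
\]
where the leftmost quantity is the genuine $\liminf$ because the subsequence $t_k$ was extracted from a sequence realizing it.

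The main obstacle is the mismatch of norms: Theorem~\ref{thrm:self-similarity} supplies convergence only in $L^q$ with $q>2$, since the $L^2$ norm is scale-invariant under $S$ and is merely preserved \emph{weakly} in the limit; consequently one cannot pass to the limit in the $L^2$ mass on all of $\Real^d$ directly. The device that circumvents this is to localize to bounded balls $\{r<A\}$, where the supercritical integrability $q>2$ controls the $L^2$ norm via H\"older, and only afterwards to send $A\to\infty$. A secondary bookkeeping point is that the self-similar convergence holds only along a subsequence; this is why one first passes to a sequence realizing the $\liminf$ and then extracts the convergent subsequence, so that the final inequality is for the true $\liminf$ rather than for some arbitrary subsequence.
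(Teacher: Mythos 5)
Your argument is correct and follows essentially the same route as the paper: the paper deduces this corollary from Corollary~\ref{cor:power-conc-lower-bound}, whose proof in Appendix~\ref{app:proof-power-conc-lower-bound} likewise rescales by $\LN(t_k)$, invokes Theorem~\ref{thrm:self-similarity} to obtain $\norm{\Psi}_2^2\geq\BPCrit$, localizes to balls $\{r<M\}$ via the same change of variables, and lets $M\to\infty$. The only cosmetic difference is the limit passage on bounded balls --- the paper uses weak lower semicontinuity of the $L^2$ norm from $\phi_k\weakto\Psi$, while you upgrade the strong $L^q$ convergence ($q>2$) to strong $L^2$ convergence on bounded balls via H\"older --- and both devices work.
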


The following Corollary shows that the rate of power-concentration is not slower
than the blowup rate~$\LN(t)$.
The NLS analogue is due to Tsutsumi~\cite{Tsutsumi-90} and
Weinstein~\cite{Weinstein-89}.

\begin{cor}
	\label{cor:power-conc-lower-bound}
Let~$d\geq 2$, let~$\psi(t,r)$ be a solution of the radially-symmetric 
critical BNLS~(\ref{eq:radial_CBNLS}) that becomes singular at~$t=\TCrit<\infty$,
and let~$ \LN(t) = \norm{\Delta\psi}_2^{-1/2}$.
Then,
\begin{enumerate}
	\item For any monotonically-decreasing function~$a(t):[0,\TCrit)\to\Real^+$ 
		such that \[
			\lim_{t\to \TCrit}a(t)=0,\qquad \text{and} \qquad 
			\lim_{t\to \TCrit}\LN/a=0, 
		\] we have that \[
			\liminf_{t\to \TCrit} \norm{\psi(t,r)}_{L^2(r<a(t))}^2  \geq \BPCrit.
		\]
	\item For any~$\epsilon>0$,~$\exists K>0$ such that \[
		\liminf_{t\to \TCrit} \norm{\psi(t,r)}_{L^2(r<K\LN(t))}^2  
			\geq (1-\epsilon) \BPCrit.
	\]
\end{enumerate}
\begin{proof}
	See Appendix~\ref{app:proof-power-conc-lower-bound}.
\end{proof}\end{cor}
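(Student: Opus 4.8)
The plan is to reduce both parts to a single rescaled concentration estimate built on Theorem~\ref{thrm:self-similarity}, and to note that part~1 is an immediate consequence of part~2 (and that Corollary~\ref{cor:power-concentration} in turn follows from part~1). The starting point is a scaling identity special to the $L^2$-critical case: since $S(\psi)(t,\rho)=\LN^{d/2}(t)\psi(t,\LN(t)\rho)$ rescales the mass by exactly the factor that cancels the radial Jacobian, the change of variables $r=\LN(t)\rho$ gives, for every fixed $K>0$,
\[
	\norm{\psi(t,r)}_{L^2(r<K\LN(t))}^2
	= \norm{S(\psi)(t,\rho)}_{L^2(\rho<K)}^2 .
\]
Thus the power inside a ball of radius proportional to the core scale $\LN(t)$ equals the power of the rescaled profile on a \emph{fixed} ball, and the whole problem is transferred to the functions $S(\psi)(t,\cdot)$, whose subsequential limits are governed by Theorem~\ref{thrm:self-similarity}.

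First I would establish part~2 by contradiction. Suppose it fails: then there is $\epsilon_0>0$ such that for every $K>0$ one has $\liminf_{t\to\TCrit}\norm{S(\psi)(t)}_{L^2(\rho<K)}^2<(1-\epsilon_0)\BPCrit$. Picking an increasing sequence $K_n\to\infty$, I would select times $t_n\nearrow\TCrit$ with $\norm{S(\psi)(t_n)}_{L^2(\rho<K_n)}^2<(1-\epsilon_0)\BPCrit$. Applying Theorem~\ref{thrm:self-similarity} to $t_n\to\TCrit$ yields a subsequence along which $S(\psi)(t_n)\to\Psi$ strongly in $L^q$ (for $q$ as in~\eqref{eq:thrm-self-q}) and weakly in $H^2$, with $\norm{\Psi}_2^2\geq\norm{R}_2^2=\BPCrit$. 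For any fixed $M>0$ we have $K_n>M$ eventually, so $\norm{S(\psi)(t_n)}_{L^2(\rho<M)}^2\leq\norm{S(\psi)(t_n)}_{L^2(\rho<K_n)}^2<(1-\epsilon_0)\BPCrit$. Since $\{\rho<M\}$ has finite measure, strong $L^q$ convergence with $q>2$ upgrades by H\"older to strong $L^2$ convergence on $\{\rho<M\}$, so passing to the limit gives $\norm{\Psi}_{L^2(\rho<M)}^2\leq(1-\epsilon_0)\BPCrit$. Letting $M\to\infty$ yields $\norm{\Psi}_2^2\leq(1-\epsilon_0)\BPCrit<\BPCrit$, contradicting the lower bound of Theorem~\ref{thrm:self-similarity}.

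Part~1 then follows. Given $\epsilon>0$, fix $K$ as produced by part~2, so that $\liminf_{t\to\TCrit}\norm{\psi(t)}_{L^2(r<K\LN(t))}^2\geq(1-\epsilon)\BPCrit$. Because $a(t)$ decays slowly relative to the core, i.e. $\LN/a\to0$ (equivalently $a/\LN\to\infty$), we have $a(t)>K\LN(t)$ for all $t$ sufficiently close to $\TCrit$, whence $\norm{\psi(t)}_{L^2(r<a(t))}^2\geq\norm{\psi(t)}_{L^2(r<K\LN(t))}^2$ for such $t$; taking the liminf gives $\liminf_{t\to\TCrit}\norm{\psi(t)}_{L^2(r<a(t))}^2\geq(1-\epsilon)\BPCrit$. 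Since $\epsilon>0$ is arbitrary, the liminf is $\geq\BPCrit$, which is exactly part~1.

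The hard part will be the uniformity required in part~2: each subsequential profile $\Psi$ supplied by Theorem~\ref{thrm:self-similarity} may depend on the chosen sequence $t_n\to\TCrit$, so one cannot simply read off a single concentration radius from one profile. What makes the contradiction scheme succeed is that the mass lower bound $\norm{\Psi}_2^2\geq\BPCrit$ holds for \emph{every} such limit; a hypothetical failure of uniform concentration is then converted into a limiting profile carrying less than $\BPCrit$, which is impossible. A secondary technical point is the passage from the strong $L^q$ convergence of Theorem~\ref{thrm:self-similarity} (only for $q>2$) to $L^2$ convergence on balls; this is harmless because the balls have finite measure, but it must be invoked explicitly, since $L^2$ convergence on all of $\Real^d$ is unavailable---indeed the total $L^2$ norm is scaling-invariant and need not converge.
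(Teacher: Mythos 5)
Your proof is correct, and it rests on the same two ingredients as the paper's: the exact scaling identity $\norm{\psi(t)}_{L^2(r<K\LN(t))}^2=\norm{S(\psi)(t)}_{L^2(\rho<K)}^2$ (valid precisely because the case is $L^2$-critical) and the compactness statement of Theorem~\ref{thrm:self-similarity} with its mass lower bound $\norm{\Psi}_2^2\ge\BPCrit$. The organization, however, is inverted relative to the paper. The paper proves part~1 directly: it passes to the subsequence $t_k$ of Theorem~\ref{thrm:self-similarity}, uses $a(t_k)/\LN(t_k)\to\infty$ to dominate $\norm{\phi_k}_{L^2(\rho<M)}^2$ for every fixed $M$, invokes weak lower semicontinuity of the $L^2$ norm on the ball $\{\rho<M\}$ to get $\norm{\Psi}_{L^2(\rho<M)}^2\le\liminf_k\norm{\phi_k}_{L^2(\rho<M)}^2$, and lets $M\to\infty$; part~2 is then obtained by choosing $K$ so that $\norm{\Psi}_{L^2(\rho<K)}^2\ge(1-\epsilon)\BPCrit$. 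You instead prove part~2 first, by contradiction with a diagonal sequence $K_n\to\infty$, and deduce part~1 (and hence Corollary~\ref{cor:power-concentration}) by ball inclusion and $\epsilon\to0$. Your route buys something real: since the limit profile $\Psi$ of Theorem~\ref{thrm:self-similarity} depends on the chosen subsequence, the radius $K=K(\Psi)$ selected in the paper's part~2 a priori depends on that subsequence as well, and upgrading this to a single $K$ valid for the $\liminf$ over all $t\to\TCrit$ requires exactly the uniformity argument you supply; the paper leaves this step implicit. Your replacement of weak $L^2$ lower semicontinuity by strong $L^q$-to-$L^2$ convergence on finite-measure balls (via H\"older) is an equivalent, equally valid mechanism. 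The paper's version is shorter and avoids the final $\epsilon$-limit in part~1, but both arguments are sound.
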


\noindent
Since the second part of Corollary~\ref{cor:power-conc-lower-bound} is true 
for all~$\epsilon>0$ and since~$K\LN(t)\to 0$, 
Corollary~\ref{cor:power-concentration} follows.

The next Corollary shows that the power-concentration rate has a quartic-root
upper bound.
The analogue in NLS theory, which is a square-root upper bound, was proved 
in~\cite{Weinstein-89,Tsutsumi-90}.

\begin{cor}
	\label{cor:power-conc-upper-bound}
Let~$d\geq 2$ and let~$\psi(t,r)$ be a solution of the radially-symmetric 
critical BNLS~(\ref{eq:radial_CBNLS}) that becomes singular at~$t=\TCrit<\infty$.
Then,
\begin{enumerate}
	\item For any monotonically-decreasing~$a:[0,\TCrit)\to\Real^+$ 
		such that \[
			\lim_{t\to \TCrit}a(t)=0,\qquad \text{and} \qquad 
			\lim_{t\to \TCrit}\frac{(\TCrit-t)^{1/4}}{a(t)}=0, 
		\] we have that \[
			\liminf_{t\to \TCrit} \norm{\psi(t,r)}_{L^2(r<a(t))}^2  \geq \BPCrit.
		\]
	\item For any~$\epsilon>0$,~$\exists K>0$ such that \[
		\liminf_{t\to \TCrit} \norm{\psi(t,r)}^2_{L^2(r<K(\TCrit-t)^{1/4})}  
			\geq (1-\epsilon) \BPCrit.
	\]
\end{enumerate}
\begin{proof}
	Theorem~\ref{thrm:low-bound} implies that~$\LN(t)\leq K(\TCrit-t)^{1/4}$.
	Therefore, the result follows immediately from
	Corollary~\ref{cor:power-conc-lower-bound}.
\end{proof}\end{cor}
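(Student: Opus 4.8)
The plan is to deduce this Corollary directly from Theorem~\ref{thrm:low-bound} together with Corollary~\ref{cor:power-conc-lower-bound}, using the former to trade the dynamical scale~$\LN(t)$ for the explicit scale~$(\TCrit-t)^{1/4}$. Theorem~\ref{thrm:low-bound} supplies a constant~$K_0=K_0(\norm{\psi_0}_2)>0$ with~$\LN(t)\le K_0(\TCrit-t)^{1/4}$ for all~$t\in[0,\TCrit)$, so any localization statement phrased in terms of~$\LN(t)$ immediately yields a (weaker, but explicit) statement phrased in terms of~$(\TCrit-t)^{1/4}$. The whole proof is an exercise in transferring hypotheses and conclusions across this bound.

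For the first part, I would verify that the hypotheses imposed on~$a(t)$ force it to satisfy the assumptions of Corollary~\ref{cor:power-conc-lower-bound}(1). Indeed,~$a$ is monotonically decreasing with~$a(t)\to0$, and the bound~$\LN(t)\le K_0(\TCrit-t)^{1/4}$ gives \[ \frac{\LN(t)}{a(t)}\le K_0\,\frac{(\TCrit-t)^{1/4}}{a(t)}\xrightarrow[t\to\TCrit]{}0, \] precisely because we have assumed~$(\TCrit-t)^{1/4}/a(t)\to0$. Hence~$a$ is an admissible localization scale for Corollary~\ref{cor:power-conc-lower-bound}(1), and the conclusion~$\liminf_{t\to\TCrit}\norm{\psi(t,r)}_{L^2(r<a(t))}^2\ge\BPCrit$ follows at once.

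For the second part, I would start from Corollary~\ref{cor:power-conc-lower-bound}(2): given~$\epsilon>0$, there is a constant~$K'>0$ with~$\liminf_{t\to\TCrit}\norm{\psi(t,r)}_{L^2(r<K'\LN(t))}^2\ge(1-\epsilon)\BPCrit$. The bound~$\LN(t)\le K_0(\TCrit-t)^{1/4}$ yields the inclusion of balls~$\{r<K'\LN(t)\}\subseteq\{r<K(\TCrit-t)^{1/4}\}$ with~$K:=K'K_0$, so that~$\norm{\psi(t,r)}_{L^2(r<K(\TCrit-t)^{1/4})}^2\ge\norm{\psi(t,r)}_{L^2(r<K'\LN(t))}^2$ for each fixed~$t$. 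Passing to the~$\liminf$ preserves the inequality and gives the claim.

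No step presents a genuine obstacle: all the analytic content already resides in Theorem~\ref{thrm:low-bound} and Corollary~\ref{cor:power-conc-lower-bound}, and the present Corollary is a clean consequence. The only points requiring care are bookkeeping ones — keeping the direction of the inequalities straight (enlarging the integration domain can only increase the~$L^2$ mass, and the quartic-root upper bound on~$\LN$ is exactly what licenses enlarging~$\{r<K'\LN(t)\}$ to the explicit ball~$\{r<K(\TCrit-t)^{1/4}\}$), and checking in the first part that the monotonicity and decay hypotheses on~$a(t)$ transfer correctly through the bound on~$\LN$.
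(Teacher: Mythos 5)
Your proof is correct and follows exactly the route the paper takes: it invokes the quartic-root upper bound on~$\LN(t)$ from Theorem~\ref{thrm:low-bound} and reduces both parts to Corollary~\ref{cor:power-conc-lower-bound}. The paper merely states that the result ``follows immediately,'' whereas you have (correctly) filled in the two bookkeeping steps — that~$\LN/a\to0$ follows from~$(\TCrit-t)^{1/4}/a\to0$, and that the ball inclusion with~$K=K'K_0$ preserves the~$\liminf$ inequality.
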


Recently, Chae, Hong and Lee~\cite{ChaeHongLee:2009} used the harmonic analysis method
of Bourgain~\cite{Bourgain-98} to prove that singular solutions of the critical BNLS
for~$d\ge2$ have the power-concentration property
\[
	\lim_{t\to\TCrit} \sup_{\bvec{x_0}\in \Real^d}
		\norm{
			\psi(t,\bvec{x})
		}_{L^2\left( 
			\abs{\bvec{x}-\bvec{x_0}} < \left( \TCrit-t \right)^{1/4} 
		\right)
		}^2
		> C,
\]
where~$C$ is a positive constant.
This result is more general than
Corollaries~\ref{cor:power-concentration},\ref{cor:power-conc-lower-bound},\ref{cor:power-conc-upper-bound}
in that it does not assume radial symmetry.
The proof given here, however, is considerably simpler.
More importantly, it shows that~$C=\PCrit$.

\section{\label{sec:crit_peak}Peak-type singular solutions of the critical BNLS}

In this, we consider radially-symmetric singular solutions that are
``{\em peak-type}'', i.e., for which~$\rmax(t)\equiv0$ for~$0\le t \le T_c$,
where~$
	\rmax(t) = \displaystyle \arg \max_r|\psi|
$ is the location of the maximal amplitude.

\subsection{The critical NLS - review}

The critical NLS admits singular solutions that collapse with the
universal~$\psi_{R^\text{NLS}}$ profile, i.e.,~$\psi\sim\psi_{R^\text{NLS}}$,
where 
\begin{equation}	\label{eq:psi_R_NLS_1}
	\psi_{R^\text{NLS}}(t,r)=\frac{1}{L^{1/\sigma}(t)}
		R^\text{NLS}(\rho)e^{i\tau+i\frac{L_t}{4L}r^2}, \qquad
	\tau=\int_0^t\frac{ds}{L^2(s)},\qquad
	\rho=\frac r{L(t)}.
\end{equation}
The self-similar profile~$R^\text{NLS}$ is the ground-state solution of \[
	-R +\Delta R +\abs{R}^{2\sigma} R = 0.
\] 

The blowup rate of~$L(t)$ is given by the 
{\em loglog law} \cite{Fraiman-85,Landman-88,LeMesurier-88,Merle-03}
\begin{equation}	\label{eq:logloglaw}
	L(t) \sim \left(
		\frac{2\pi( T_c- t)}{\log\abs{\log( T_c- t)}}
	\right)^\frac{1}{2},\qquad t\to T_c.
\end{equation}
Since the blowup rate~\eqref{eq:logloglaw} is slightly faster than a
square root
$
	\displaystyle 
		\lim_{t\to\TCrit}LL_t 
		= \lim_{t\to\TCrit}\frac12\left( L^2\right)_t = 0.
$
Therefore, the phase term~$
	\frac{L_t}{4L}r^2 = \frac{LL_t}8\rho^2
$ in~\eqref{eq:psi_R_NLS_1} vanishes as~$t\to\TCrit$.
Hence, the blowup profile reduces to 
\begin{equation}	\label{eq:psi_R_NLS_2}
	\psi_{R^\text{NLS}}(t,r)=\frac{1}{L^{1/\sigma}(t)}
		R^\text{NLS}(\rho)e^{i\tau}, \qquad
	\tau=\int_0^t\frac{ds}{L^2(s)},\qquad
	\rho=\frac r{L(t)}.
\end{equation}

\subsection{\label{ssec:crit_peak_analysis}Informal analysis}

We now look for the "corresponding" peak-type singular solutions of the critical
BNLS~\eqref{eq:CBNLS}.
Theorem~\ref{thrm:self-similarity} suggests that the collapsing core of the
singular solution approaches a self-similar form, i.e.,
%
\[
	\psi(t,r) \sim \psi_B(t,r), \qquad 
	0\le r\le \rho_c\cdot L(t),
\]
where
\begin{equation}	\label{eq:crit_peak_QSS-2} 
	\psi_B(t,r) = 
		\frac1{L^{d/2}(t)}	B(\rho)	e^{i\tau(t)},\qquad 
	\rho = \frac rL,
\end{equation}
and~$\rho_c=\mathcal{O}(1)$.
Substituting~\eqref{eq:crit_peak_QSS-2} into~\eqref{eq:radial_CBNLS} and
requiring that~$
	\left[ \psi_t \right]
		\sim \left[ \Delta\psi \right]
		\sim \left[ |\psi|^{d/2}\psi \right]
$ suggests that \[
	\tau(t) = \int_{s=0}^{t}\frac{1}{L^{4}(s)}ds.
\]

Let us consider the self-similar profile~$B(\rho)$.
In the singular region~$r = {\cal O}(L)$ we have that \[
	\Delta^2\psi \sim 
	\Delta^2\psi_B \sim
		\frac{e^{i\tau}}{L^{4+d/2}} \Delta_\rho^2 B , \qquad 
	\abs{\psi}^{8/d}\psi \sim
	\abs{\psi_B}^{8/d}\psi_B =
		\frac{e^{i\tau}}{L^{4+d/2}}|B|^{8/d}B,
\] and \[
	\psi_t \sim 
		\left( \psi_B \right)_t 
	\sim
		\frac{e^{i\tau}}{L^{4+d/2}} \left\{
			iB	- L_tL^3 \left(	\frac d2 B + \rho B_\rho \right)
		\right\}\,. 
\]
Hence,~$B(\rho)$ satisfies 
\begin{equation}	\label{eq:peak_CBNLS_ODE_1}
	-B(\rho) -\Delta^2_\rho B +\abs{B}^{8/d} =
	i \left( 
		\lim_{t\to\TCrit} L_tL^3
	\right) \left(	
		\frac d2 B + \rho B_\rho 
	\right)\,.
\end{equation}

Theorem~\ref{thrm:low-bound} shows that that blowup rate of~$L(t)$ is
lower-bounded by a quartic-root.
In the critical NLS the blowup rate of peak-type solutions is
slightly faster than the analogous square-root rate, due to the loglog
correction.
Hence, we expect that the blowup rate of peak-type critical BNLS solutions
is slightly faster than a quartic root, i.e.,
\begin{equation}	\label{eq:L_faster_than_14}
	\frac{L(t)}{\sqrt[4]{\TCrit-t}} \to 0.
\end{equation} 
In that case,~$ 
\displaystyle 
	\lim_{t\to\TCrit} L_tL^3 = 
	\lim_{t\to\TCrit} \frac14 \left( L^4 \right)_t = 0,
$
and~\eqref{eq:peak_CBNLS_ODE_1} reduces to the standing-wave
equation~\eqref{eq:crit_stationary_state}.
Since the ground-states of~\eqref{eq:crit_stationary_state} attain their maximal
amplitudes at~$\rho=0$, see Section~\ref{sec:stationary}, these are peak-type
solutions.

The above informal analysis thus leads to the following Conjecture:
\begin{conj}	\label{conj:crit_peak_rate_profile} 
	The critical BNLS admits peak-type singular solutions such that: 
	\begin{enumerate}
		\item The collapsing core approaches the self-similar profile
			\begin{subequations}	\label{eqs:crit_peak_QSS} 
				\begin{equation}
					\psi(t,r) \sim \psi_R(t,r), \qquad 
					0\le r\le \rho_c\cdot L(t), 
				\end{equation}
				where
				\begin{equation}
					\psi_R(t,r) = 
						\frac1{L^{d/2}(t)}	R(\rho)	e^{i\tau(t)},\qquad 
					\rho = \frac rL,\quad
					\tau(t) = \int_{s=0}^{t}\frac{1}{L^{4}(s)}ds,
				\end{equation}
			\end{subequations}
			and~$R$ is the ground-state of
			equation~\eqref{eq:crit_stationary_state}.
		\item The blowup rate of~$L(t)$ is slightly faster than a quartic-root,
				i.e., 
			\begin{equation}	\label{eq:rate_slightly_mad}
				\lim_{t\to\TCrit} 
					\frac{L(t)}{(\TCrit-t)^p} =
				\begin{cases}
					0	&	p=1/4 \\
					\infty	&	p>1/4
				\end{cases}\,.
			\end{equation}
	\end{enumerate}
\end{conj}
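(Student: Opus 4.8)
The plan is to prove the conjecture by modulation theory, following the strategy Merle and Raphael developed for the critical NLS loglog law, with the substitutions $\Delta\to\Delta^2$ and $1/2\to1/4$ dictated by the biharmonic structure. Much of the groundwork is already in place through Theorem~\ref{thrm:self-similarity}: the rescaled family $S(\psi)(t,r)=\LN^{d/2}(t)\,\psi(t,\LN(t)r)$ has, along any sequence $t_k\to\TCrit$, a subsequence converging strongly in $L^q$ to a nonzero limit $\Psi$ with $H[\Psi]\le0$ and $\norm{\Psi}_2\ge\norm{R}_2$. The remaining work is twofold: (i) to identify the limiting profile as the ground state $R$ itself, and (ii) to pin down the scale $L(t)$, thereby establishing the rate~\eqref{eq:rate_slightly_mad}. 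The informal analysis of Section~\ref{ssec:crit_peak_analysis} shows that these tasks are coupled through the quantity $L_tL^3=\tfrac14(L^4)_t$: once one proves $\lim_{t\to\TCrit}L_tL^3=0$, the profile equation~\eqref{eq:peak_CBNLS_ODE_1} collapses to the standing-wave equation~\eqref{eq:crit_stationary_state}, and then, combined with the mass bound $\norm{\Psi}_2\ge\norm{R}_2$ and a variational characterization of $R$ as the minimal-mass solution, this would force $\Psi=R$.

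First I would fix a decomposition in the self-similar variable $\rho=r/L(t)$, writing $\psi(t,r)=L^{-d/2}(t)\big(R(\rho)+\varepsilon(t,\rho)\big)e^{i\tau(t)}$, where the modulation parameters $L(t)$ and $\tau(t)$ are chosen to impose orthogonality conditions that annihilate the directions in the kernel of the operator obtained by linearizing~\eqref{eq:crit_stationary_state} about $R$. Substituting into~\eqref{eq:radial_CBNLS} and projecting onto those directions yields modulation ODEs for $L_t$ and $\tau_t$ whose source terms are quadratic or higher in $\varepsilon$. Controlling $\varepsilon$ is the analytic heart of the argument: I would bootstrap its smallness from the two conservation laws, using conservation of power to fix $\norm{R+\varepsilon}_2^2=\norm{\psi_0}_2^2$ and the vanishing $\LN^4H[\psi_0]\to0$ (established in the proof of Theorem~\ref{thrm:self-similarity}), together with a coercivity estimate for the linearized biharmonic operator under the orthogonality conditions. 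A localized version of the BNLS variance of Section~\ref{ssec:varid}, truncated to the collapsing core, should supply the monotonicity needed to close the bootstrap.

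For the rate, Theorem~\ref{thrm:low-bound} already gives the upper bound $L(t)\le K(\TCrit-t)^{1/4}$ on the scale, which is the easy direction and only shows that $L/(\TCrit-t)^{1/4}$ stays bounded. The conjecture asks for two sharper facts: the strict decay $L/(\TCrit-t)^{1/4}\to0$, and, for every $p>1/4$, the matching lower bound $L/(\TCrit-t)^p\to\infty$. Both must come from the modulation dynamics. I would derive a closed law of the approximate form $-L_tL^3\sim c\,\eta(t)$, where $\eta(t)\ge0$ measures the excess Hamiltonian carried by the remainder $\varepsilon$; coupling this to the evolution of $\eta$ should produce a slow, logarithmically corrected decay of $(L^4)_t$, yielding a blowup slightly faster than a quartic root. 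Whether the correction is of loglog type, as in~\eqref{eq:logloglaw} for the NLS, depends on the precise nonlinear coupling, and determining it is exactly the open problem flagged in Section~\ref{ssec:discussion}.

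The hard part will be the spectral and algebraic input that NLS theory takes for granted but that is missing for the BNLS, and three obstructions stand out. First, the operator obtained by linearizing~\eqref{eq:crit_stationary_state} about $R$ is a fourth-order matrix operator whose kernel and spectral gap are unknown; because the ground state $R$ is sign-changing, unlike the strictly positive NLS ground state, the usual positivity and comparison arguments are unavailable, so the coercivity estimate must be obtained by other means. Second, the correct self-similar phase is unknown: the decomposition above uses only the slowly varying phase $e^{i\tau}$ of~\eqref{eq:crit_peak_QSS-2}, but the analogue of the NLS quadratic radial phase $e^{iL_tr^2/4L}$ has no known biharmonic counterpart, and it is precisely this term that drives the NLS rate computation. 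Third, the absence of a pseudo-conformal (lens) symmetry and of a sign-definite variance leaves no exact solution or monotone quantity to anchor the estimates, so every step of the bootstrap must be carried out with inexact, quasi self-similar profiles. For these reasons I expect a fully rigorous proof to be out of reach with present tools, and the realistic deliverable to be the rigorous reduction sketched here together with the asymptotic and numerical evidence, exactly as the statement is phrased as a conjecture.
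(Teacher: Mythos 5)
The statement you are addressing is a conjecture, and the paper does not prove it: its support consists of the informal dominant-balance analysis of Section~\ref{ssec:crit_peak_analysis} (substitute the quasi self-similar ansatz~\eqref{eq:crit_peak_QSS-2}, balance $\psi_t\sim\Delta^2\psi\sim|\psi|^{8/d}\psi$ to get the profile equation~\eqref{eq:peak_CBNLS_ODE_1}, and observe that the profile reduces to the standing-wave equation~\eqref{eq:crit_stationary_state} precisely when $L_tL^3\to0$, i.e., when the rate beats the quartic root) together with the numerical evidence of Section~\ref{ssec:crit_peak_simulations} (rescaled profiles collapsing onto $R$, fitted exponent $p\approx0.2516$, and $L^3L_t$ creeping toward $0^-$). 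Your proposal shares the same central mechanism---the coupling of profile and rate through $L_tL^3=\tfrac14(L^4)_t$---but embeds it in a genuinely different and more ambitious framework, namely a Merle--Raphael-style modulation argument, and you are right, and candid, that the ingredients this framework requires (coercivity of the linearization of~\eqref{eq:crit_stationary_state} about the sign-changing ground state, the biharmonic analogue of the quadratic radial phase, a monotone localized virial) are all currently unavailable; the paper's Discussion flags exactly these three gaps. Two cautions on the sketch itself. First, the step ``$\norm{\Psi}_2\ge\norm{R}_2$ plus a variational characterization of $R$ forces $\Psi=R$'' does not close as stated: Theorem~\ref{thrm:self-similarity} gives an inequality in the direction that permits $\Psi$ to carry \emph{more} mass than $R$, so minimality of $R$ alone cannot single out $\Psi$ without an additional rigidity or near-minimal-mass hypothesis. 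Second, the ``localized BNLS variance'' you invoke to close the bootstrap is precisely the quantity that Section~\ref{ssec:varid} shows is \emph{not} known to be sign-definite (the term $\int|\bvec{x}|^4\,{\rm Im}\{\nabla\psi^*\Delta\nabla\psi\}\,d\bvec{x}$ is not a derivative of a positive quantity), so it cannot yet play the monotonicity role it plays in the NLS. With those caveats, your conclusion---that the honest deliverable is a reduction plus asymptotic and numerical evidence---is exactly the position the paper itself takes, though the paper's actual evidence is the informal scaling argument and the simulations rather than a modulation reduction.
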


\noindent In Section~\ref{ssec:crit_peak_simulations} we provide numerical
evidence in support of Conjecture~\ref{conj:crit_peak_rate_profile}.

\subsection{\label{ssec:crit_peak_simulations}Simulations}

\begin{figure}
	\centering
	\subfloat[$d=1$]{\label{fig:peak_amp_Ls_1D}%
		\includegraphics[angle=-90,clip,width=0.35\textwidth]%
			{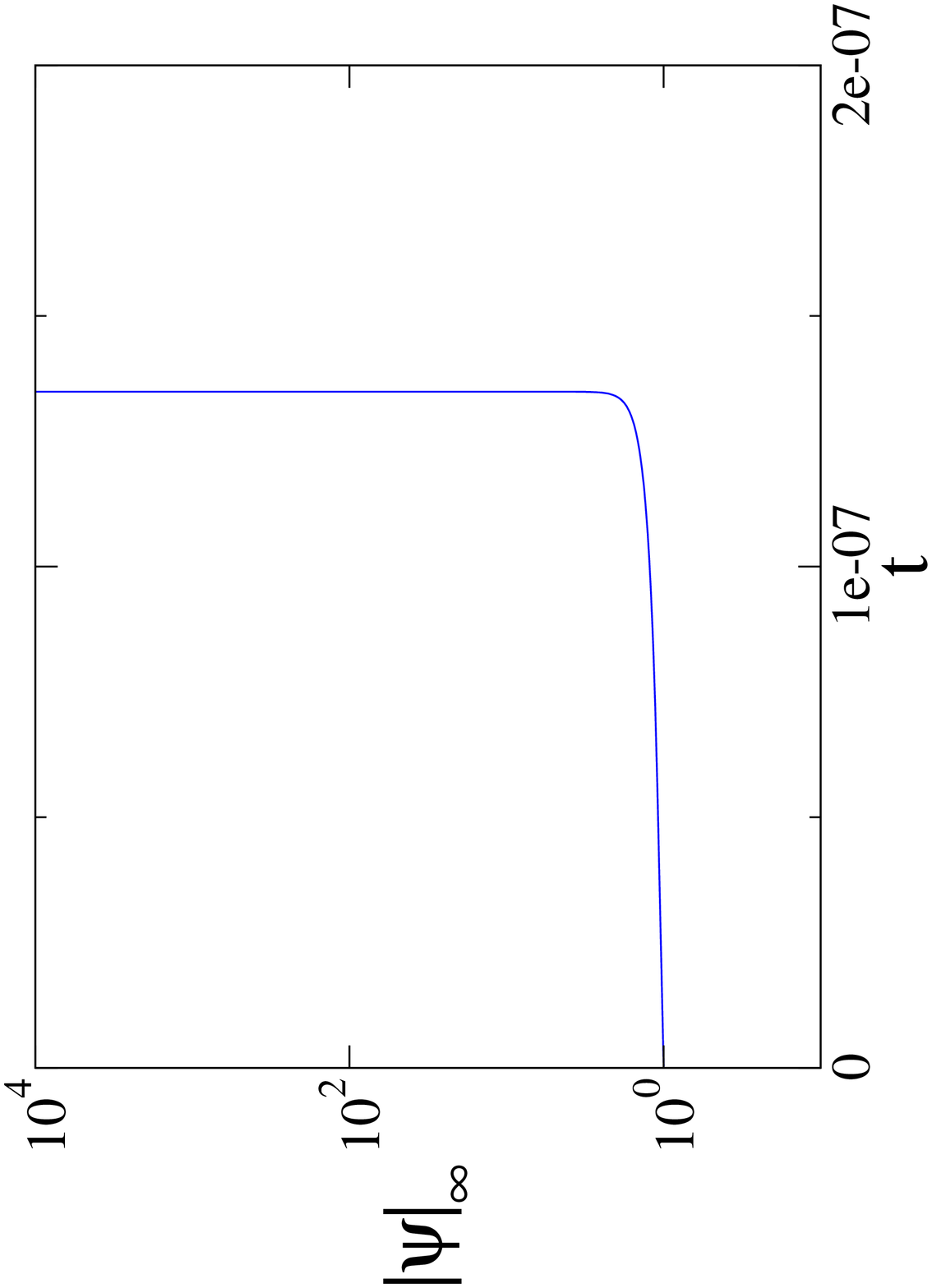}%
	}%
	\subfloat[$d=2$]{\label{fig:peak_amp_Ls_2D}%
		\includegraphics[angle=-90,clip,width=0.35\textwidth]%
			{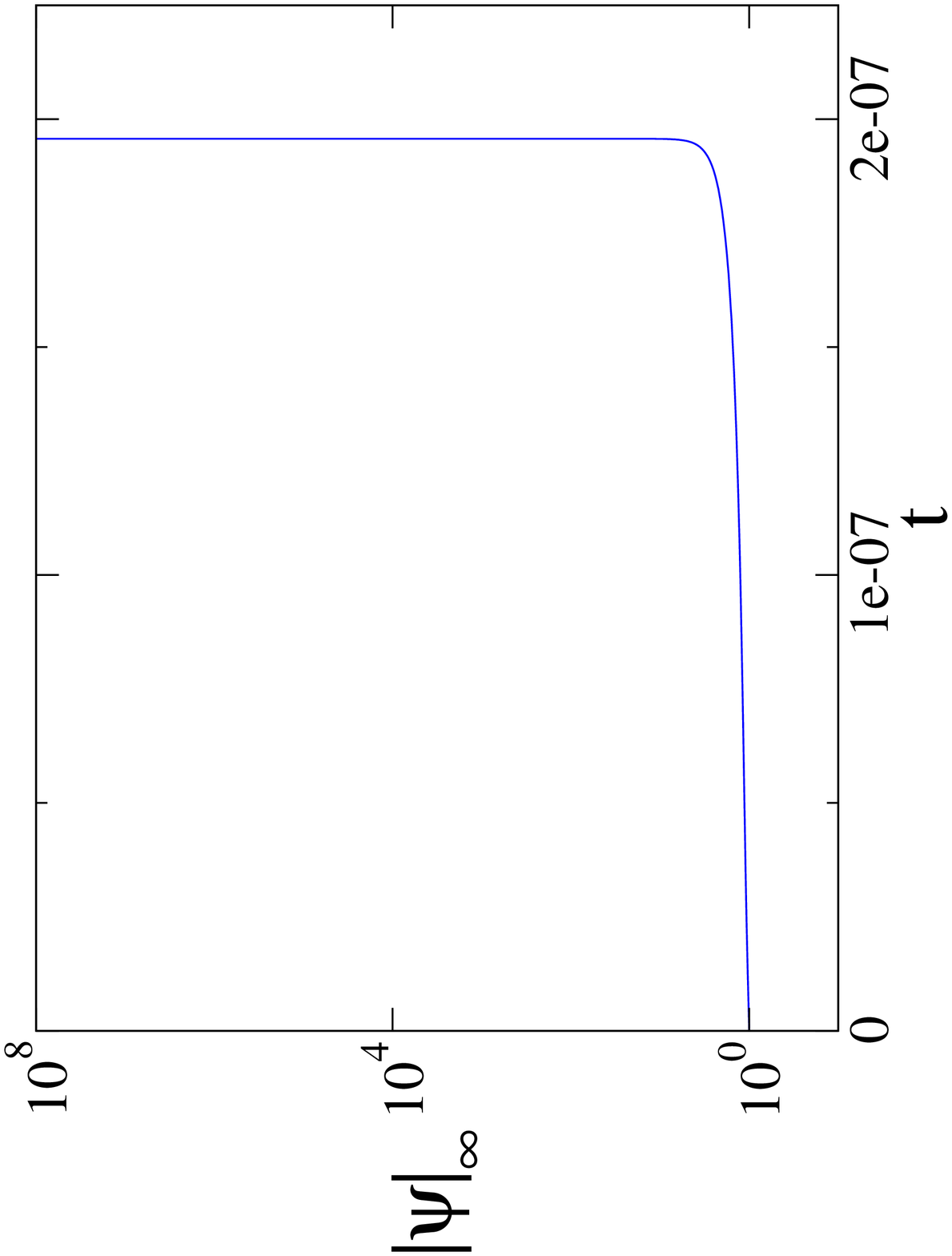}%
	}

	\mycaption{\label{fig:peak_amp_Ls}%
		Maximal amplitude of peak-type singular solutions of the critical
		BNLS~\eqref{eq:radial_CBNLS}.
	}
\end{figure}
The one-dimensional critical BNLS
\begin{equation}	\label{eq:1DCBNLS}
	i\psi_t(t,x) - \psi_{xxxx} + \left|\psi\right|^8\psi = 0
\end{equation}
was solved with the Gaussian initial condition~$
	\psi_0(x)=\left.A_1 e^{-x^2}\right.
$ with~$A_1\approx1.618$, whose power is
$\norm{\psi_0}_2^2=1.1\cdot\BPCrit(d=1)$.
The maximal amplitude of the solution~$\norm{\psi}_\infty$ as a function of time
is plotted in Fig~\ref{fig:peak_amp_Ls_1D}.
The amplitude increases abruptly by a factor of~$10^4$ around
$\TCrit\approx0.0499$, suggesting that the solution becomes singular in a finite
time.

The simulation was repeated for the radially-symmetric two-dimensional critical
BNLS
\begin{equation}	\label{eq:2DrCBNLS}
	i\psi_t(t,r) 
	- \frac{1}{r^3}\psi_r + \frac{1}{r^2}\psi_{rr} 
	- \frac{2}{r} \psi_{rrr} -\psi_{rrrr}
	+ \left|\psi\right|^{4}\psi = 0,
\end{equation}
with the Gaussian initial condition~$
	\psi_0(r)=\left.A_2 e^{-r^2}\right.
$ with~$A_2\approx3.034$, whose power is~$\|\psi_0\|_2^2=1.1\BPCrit(d=2)$
The amplitude increases abruptly by a factor of~$10^8$.
around~$\TCrit\approx 0.0606$, see Fig~\ref{fig:peak_amp_Ls_2D}, again
suggesting that the solution becomes singular in a finite time.

\begin{figure}
	\centering
	\subfloat[$d=1$]{\label{fig:crit_peak_rescaled_1D}%
		\includegraphics[angle=-90,clip,width=0.49\textwidth]%
			{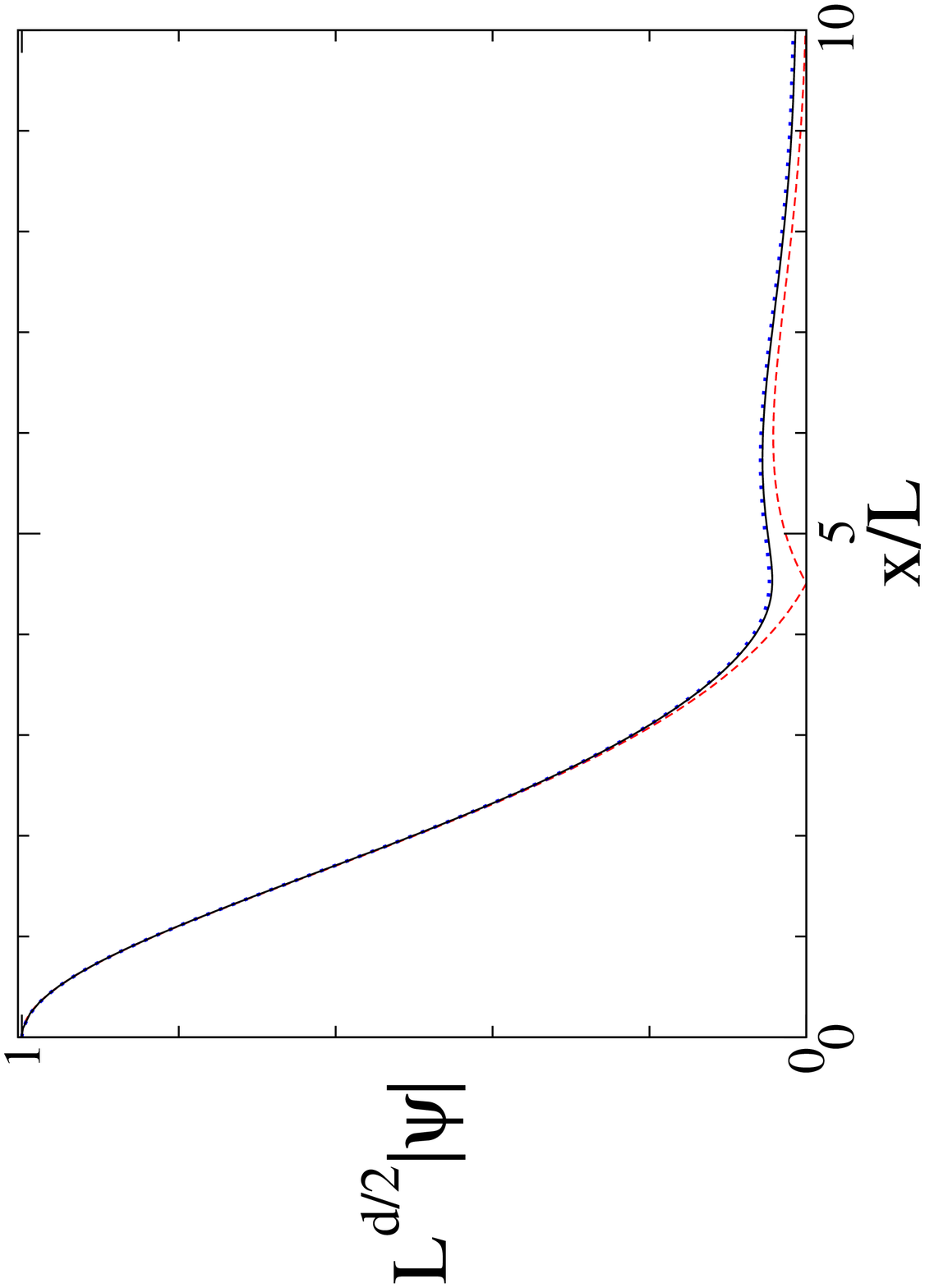}%
	}
	\subfloat[$d=2$]{\label{fig:crit_peak_rescaled_2D}%
		\includegraphics[angle=-90,clip,width=0.49\textwidth]%
			{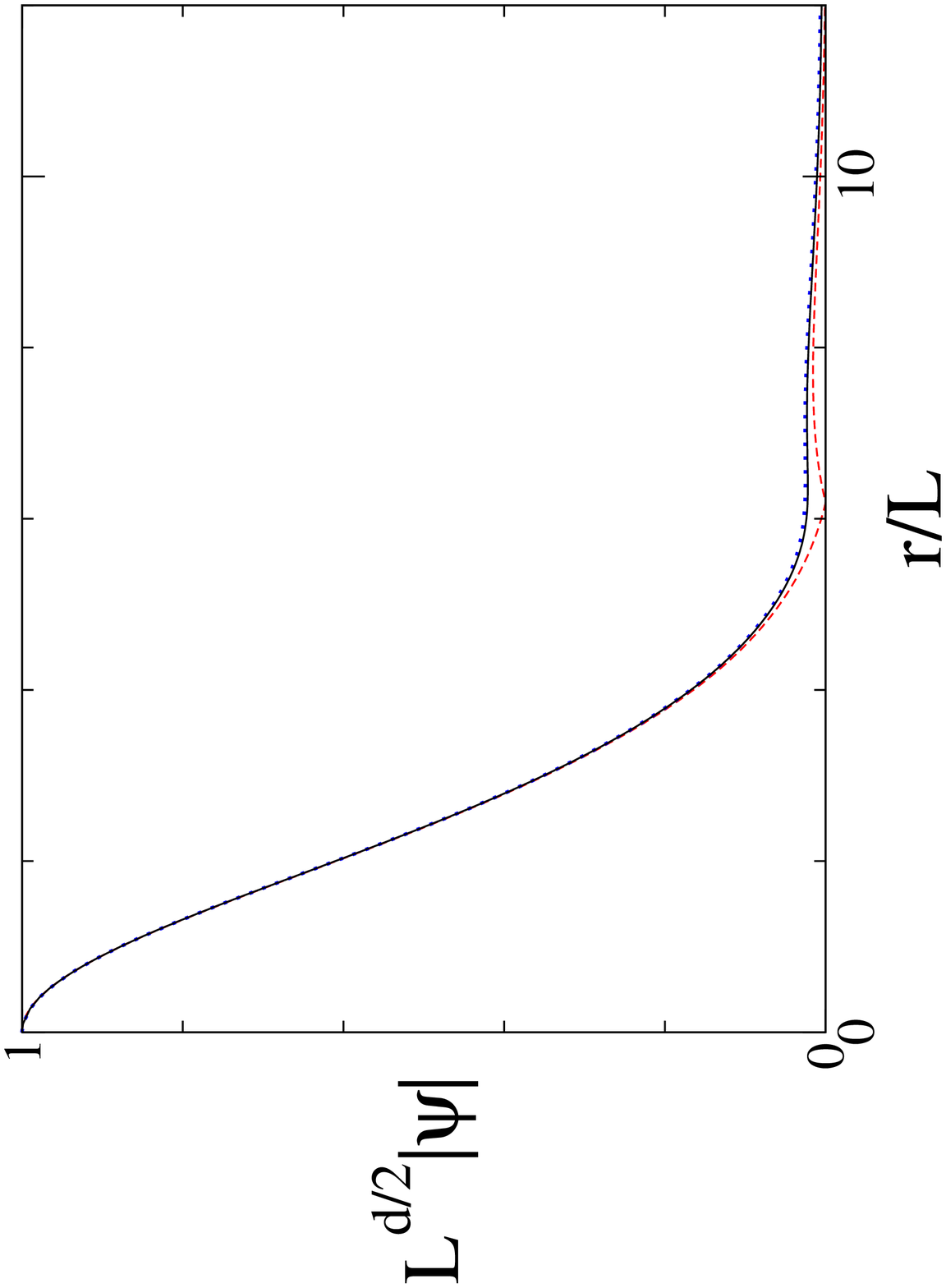}%
	}

	\mycaption{\label{fig:crit_peak_rescaled} 
	The solutions of Figure~\ref{fig:peak_amp_Ls}, rescaled according
	to~\eqref{eq:psi_rescaled_peak}, at focusing levels~$L(t)=10^{-4}$
	(blue dotted line) and~$L(t)=10^{-8}$ (black solid line). 
	Red dashed line is the rescaled ground-state~$|R|$.
	The three curves are indistinguishable for~$0\le r/L\le4$.
	}
\end{figure}
We next consider the self-similar profile of the collapsing solutions from
Figure~\ref{fig:peak_amp_Ls}.
In order to verify that it is given by~\eqref{eqs:crit_peak_QSS}, we rescale the
solutions as
\begin{equation}	\label{eq:psi_rescaled_peak}
	\psi_\text{rescaled}(t,\rho) = L^{2/\sigma}(t) \psi(t,r=\rho\cdot L), \qquad
	L(t)=\norm{\psi}_\infty^{-\sigma/2},
\end{equation}
with~$2/\sigma=d/2$.
The rescaled solutions at focusing levels of~$L=10^{-4}$ and~$L=10^{-8}$ are
indistinguishable, see Fig~\ref{fig:crit_peak_rescaled}, indicating that the
collapsing core is indeed self-similar according to~\eqref{eqs:crit_peak_QSS}.
A predicted, the self-similar profile is very close to the ground-state~$R$
in the core region~$0\le\rho\le4$.

\begin{figure}
\centering
	\subfloat[$d=1$]{%
		\includegraphics[clip,width=0.35\textwidth,angle=-90]%
			{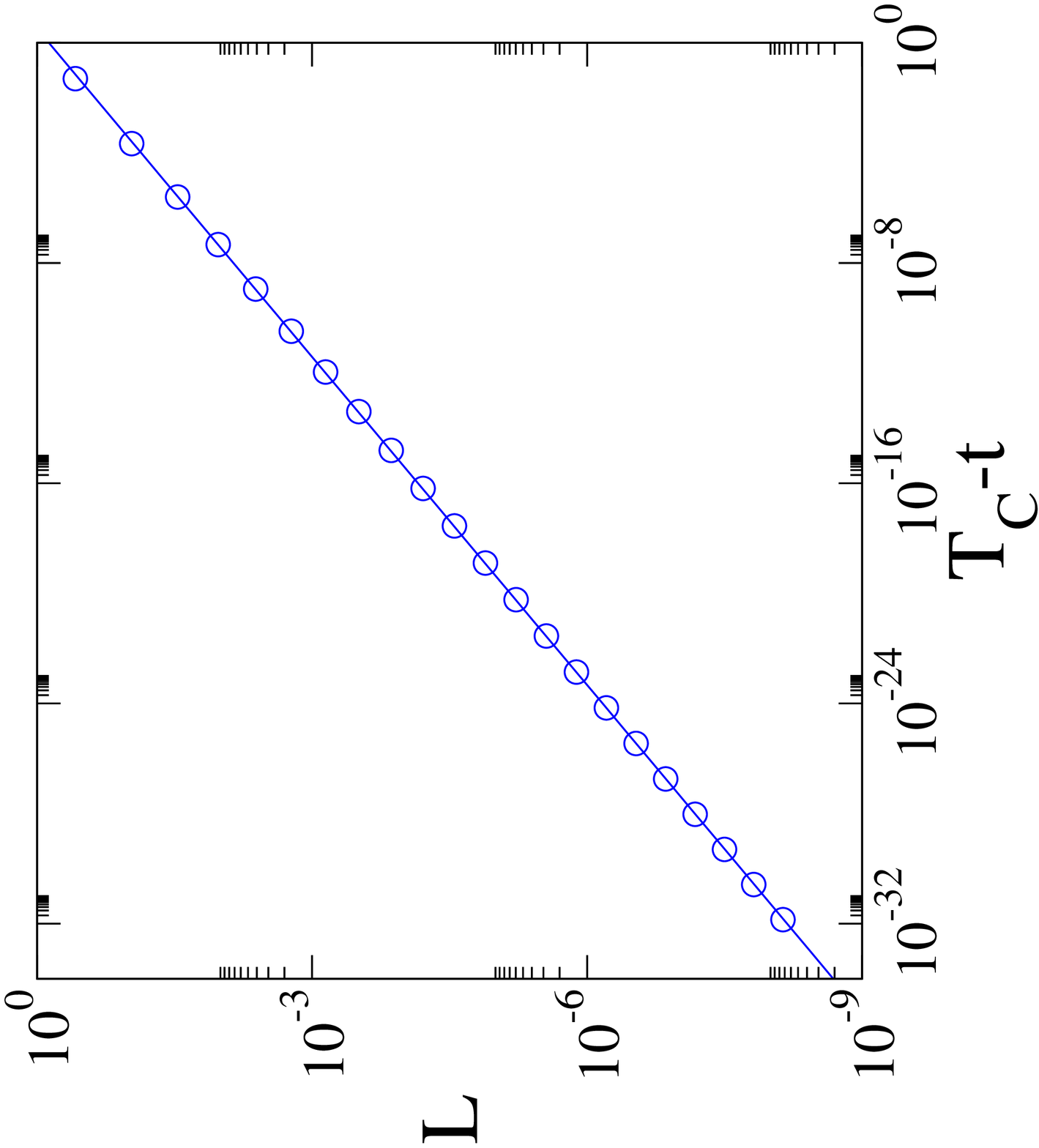}
	}
	\subfloat[$d=2$]{%
		\includegraphics[clip,width=0.35\textwidth,angle=-90]%
			{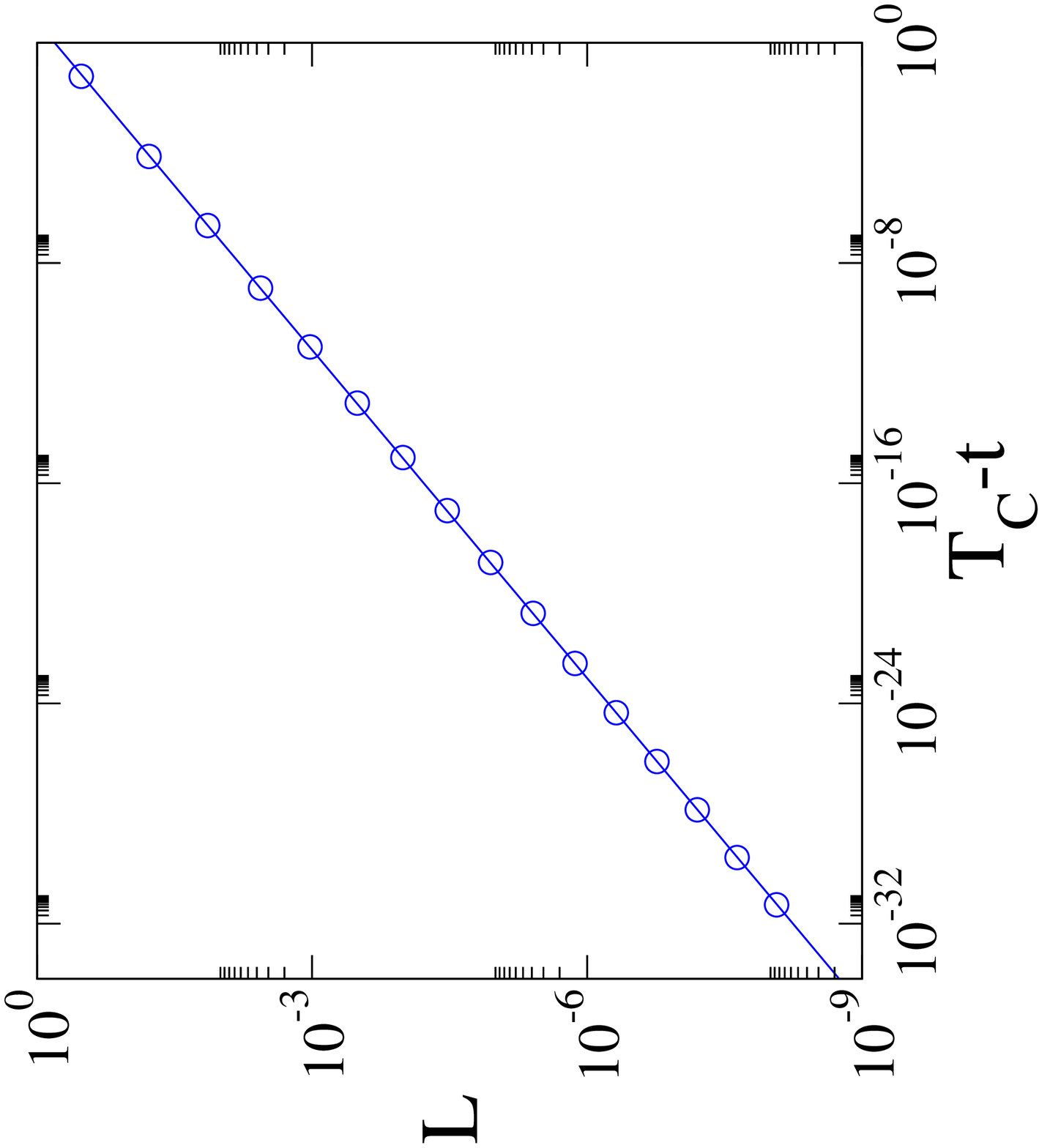}
	}

	\mycaption{\label{fig:crit_peak_powerlaw}
		$L(t)$ as a function of~$\left.(\TCrit-t)\right.$, on a logarithmic
		scale, for the solutions of Figure~\ref{fig:peak_amp_Ls} (circles).
			A)~$1D$ case. Solid line is the fitted curve
				$\left.L=0.742\cdot(\TCrit-t)^{0.2516}\right.$.
			B)~$2D$ case. Solid line is the fitted curve
				$\left.L=0.641\cdot(\TCrit-t)^{0.2516}\right.$.
	}
\end{figure}
\begin{figure}
\centering
	\subfloat[critical case]{\label{fig:crit_peak_L3Lt}%
		\includegraphics[clip,width=0.35\textwidth,angle=-90]%
			{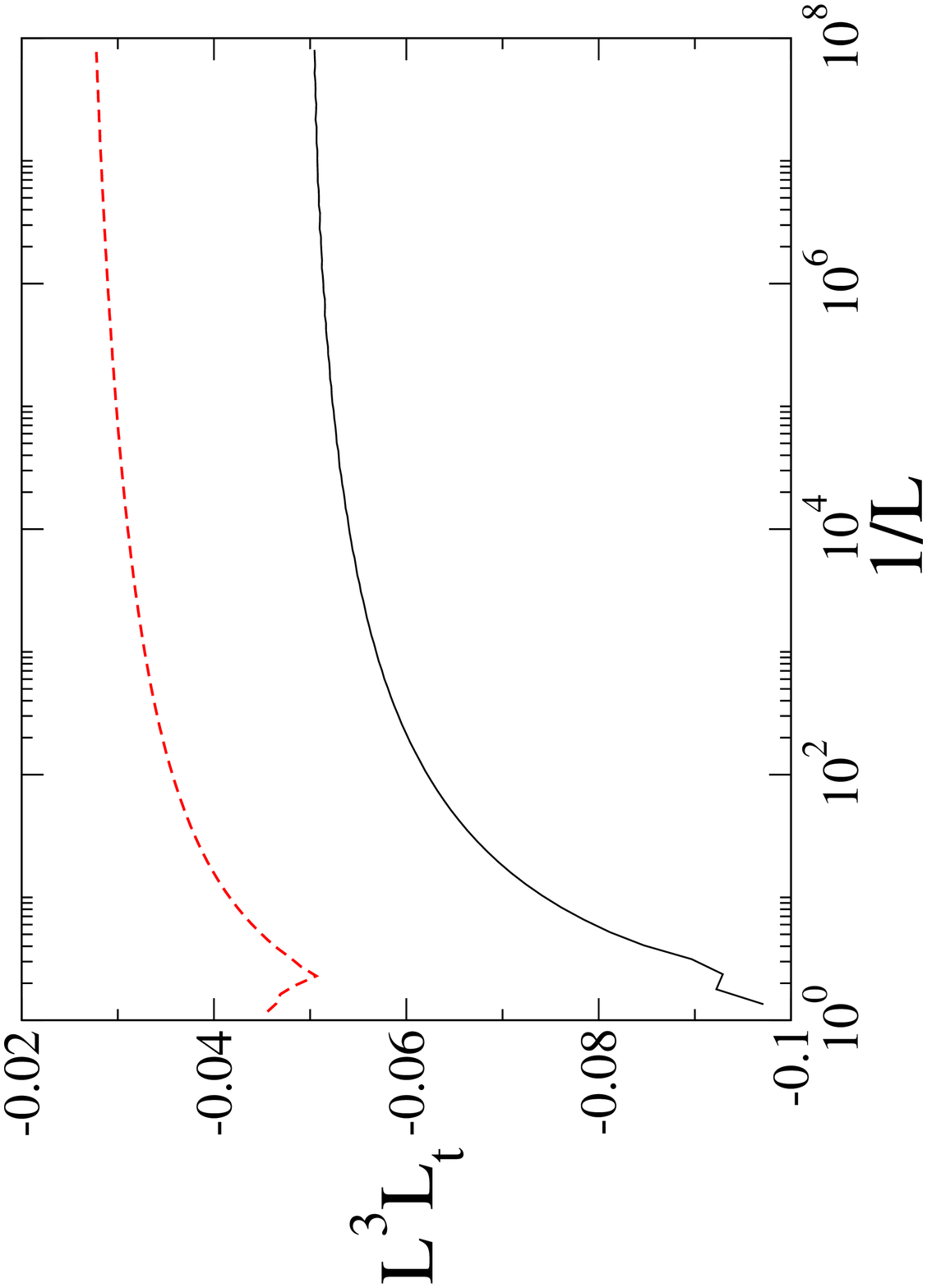}%
	}
	\subfloat[supercritical case]{\label{fig:supercrit_peak_L3Lt}%
		\includegraphics[clip,width=0.35\textwidth,angle=-90]%
			{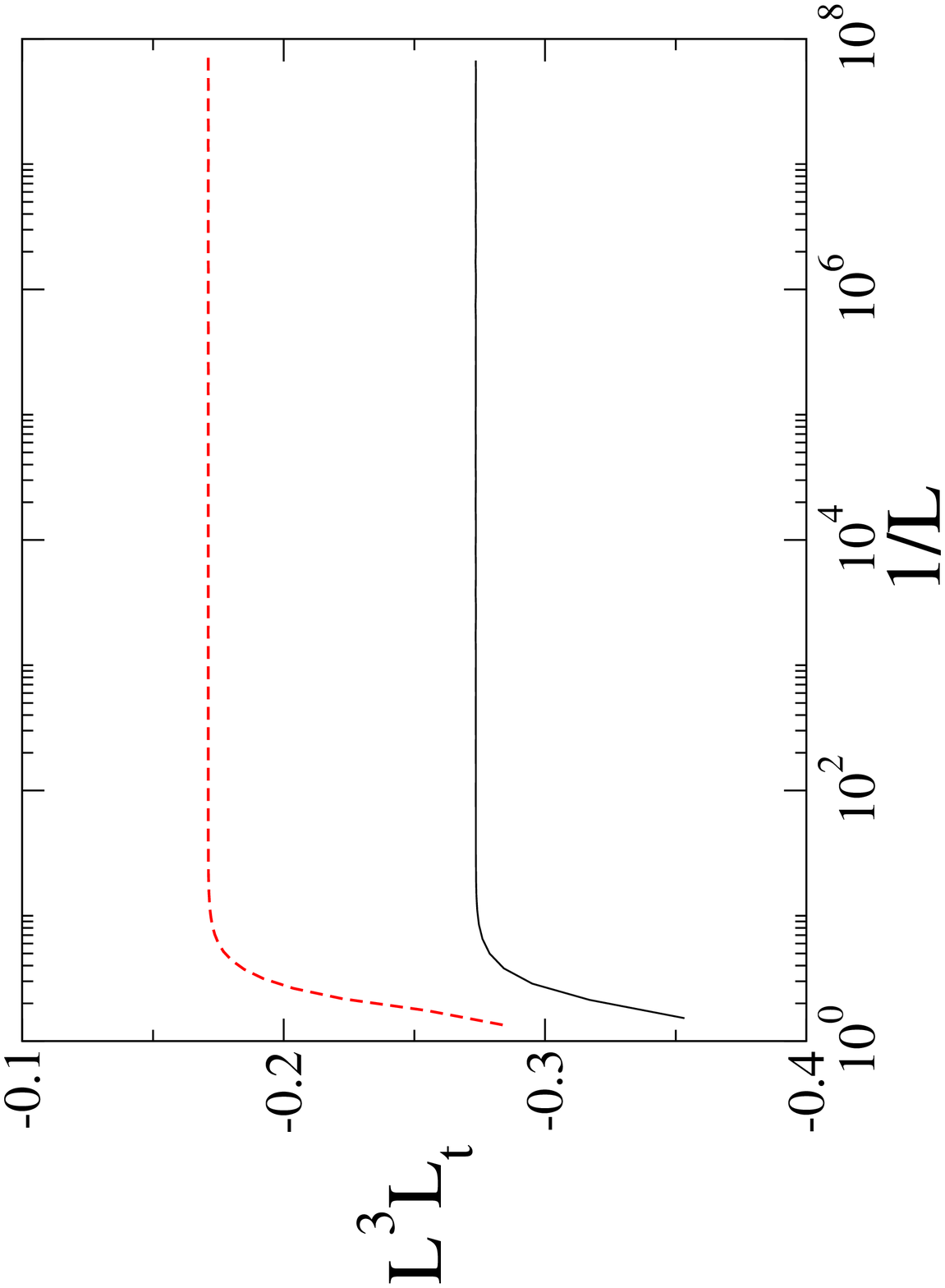}%
	}

	\mycaption{\label{peaks_L3Lt}
		A:~$L^3L_t$ as a function of~$1/L$, for the solution of
		Figure~\ref{fig:peak_amp_Ls_1D} (black solid line) and 
		of Figure~\ref{fig:peak_amp_Ls_2D} (red dashed line).
		B: same as (A) for the supercritical cases~$d=1,\sigma=6$
		(black solid line) and~$d=2,\sigma=3$ (red dashed line).
	}
\end{figure}
We next compute the blowup rate~$p$, defined by the 
relation\[L\sim \kappa (T_c-t)^p.\]
To do that, we perform a least-squares fit of~$\log(L)$ with~$\log(T_c-t)$, see
Figure~\ref{fig:crit_peak_powerlaw}, obtaining a value of~$p\approx0.2516$ for both
$d=1$ and~$d=2$.
This value of~$p$ is slightly above~$1/4$, implying that the quartic-root
lower-bound given by Theorem~\ref{thrm:low-bound} is close to the actual
blowup-rate of peak-type singular solutions.

Next, we provide two indications that the blowup rate is faster than~$1/4$.
First, if the blowup rate is exactly~$1/4$, then~$\lim_{t\to\TCrit}L^3L_t$
should be finite and strictly negative.
However, up to focusing level of~$L=10^{-8}$,~$L^3L_t$ does not appear to converge
to a negative constant, but rather to increase slowly towards~$0^-$, see
Figure~\ref{fig:crit_peak_L3Lt}.
Second, according to the informal analysis in
Section~\ref{ssec:crit_peak_analysis}, the blowup rate is faster than a quartic
root if and only if the self-similar profile~$B(\rho)$ satisfies the
standing-wave equation~\eqref{eq:stationary_state}, which is indeed what we
observed numerically in Figure~\ref{fig:crit_peak_rescaled}.

\textbf{Remark:}~
In the critical NLS the blowup rate of peak-type solutions is slightly faster
than the analogous square-root lower-bound, due to the well-known
loglog-correction~\eqref{eq:logloglaw}.
Figure~\ref{fig:crit_peak_powerlaw} shows that the blowup rate is slightly
faster than a quartic root, and Figure~\ref{fig:crit_peak_L3Lt} shows
that~$L^3L_t\to0$ very slowly. 
Together, this suggests that the blowup rate in the critical BNLS is only
slightly faster than the analogous quartic root.
At present, we do not know if the blowup rate of peak-type solutions of the
critical BNLS is a quartic root with a~$loglog$ correction.
We note, however, that the loglog correction in the critical NLS cannot be
determined numerically~\cite{PNLS-99}, and can only be derived analytically.
Therefore, we expect that the determination of the analogous correction to
the~$1/4$ blowup rate of the critical BNLS will also have to be done
analytically, and not numerically.

\section{\label{sec:supercrit_peak}Peak-type singular solutions of the supercritical BNLS}

\subsection{\label{ssec:supercrit_peak_NLS_review}The supercritical NLS - review}
In contrast to the extensive theory on singularity formation in the
critical NLS, much less is known about the supercritical NLS.
Numerical simulations and formal calculations (see,
e.g.,~\cite[Chapter 7]{Sulem-99} and the references therein)
suggest that peak-type singular solutions of the supercritical
NLS collapse with a universal~$\psi_Q$ profile, i.e.,
\begin{subequations}	\label{eq:intro_psiQ}
	\begin{equation}	\label{eq:SCNLS_quasi_parts}
		\psi(t,r) \sim
		\begin{cases}
			\psi_Q(t,r)	&	0\le r\le r_c], \\
			\psi_\text{non-singular}(t,r)\quad	& r\ge r_c ,
		\end{cases}
	\end{equation}
	where
	\begin{equation}	\label{eq:intro_psiQ_profile}
			\psi_Q(t,r)=\frac{1}{L^{1/\sigma}(t)}
				Q(\rho)e^{i\tau}, \qquad
			\tau=\int_0^t\frac{ds}{L^2(s)}, \qquad
			\rho=\frac r{L(t)}.
	\end{equation}
	Note that the singular region~$r\in[0,r_c]$ is constant in the coordinate~$r$.
	Therefore, in the rescaled variable~$\rho=r/L(t)$, the singular
	region~$\rho\in[0,r_c/L(t)]$ becomes infinite as $L(t)\to0$.
	This is in contradistinction with the critical case, wherein the singular
	region~$\rho\in[0,\rho_c]$ is constant in the rescaled variable~$\rho$, but
	shrinks to a point in the original coordinate~$r$.

	The self-similar profile~$Q$ is the solution of
	\begin{equation} \label{eq:ODE4Q}
		\begin{gathered}
			Q^{\prime\prime}(\rho) +\frac{d-1}{\rho}Q^\prime - Q 
			+i\frac {\kappa^2} 2 \left( \frac 1\sigma Q + \rho Q^\prime \right)
			+ |Q|^{2\sigma}Q = 0, \\
			\qquad Q^\prime(0)=0,\qquad Q(\infty)=0.
		\end{gathered}
	\end{equation}
\end{subequations}
Solutions of~(\ref{eq:ODE4Q}) are complex-valued, and
depend on the parameter~$\kappa$ and on the initial condition~$Q(0)=Q_0$.
Solutions of~(\ref{eq:ODE4Q}) whose amplitude~$|Q|$ is
monotonically-decreasing in~$\rho$, and which have a zero
Hamiltonian, are called {\em admissible solutions}~\cite{Sulem-99}.
For each choice of~$(\sigma,d)$, equation~\eqref{eq:ODE4Q} has a unique admissible
solution (up to a multiplication by a constant phase~$e^{i\alpha}$).
This solution is attained for specific real values of~$\kappa$ and~$Q(0)$,
which we denote as
\begin{equation}\label{eq:unique_parms_ODE4Q}
	\kappa=\kappa_Q(\sigma,d),\qquad Q(0)=Q_0(\sigma,d).
\end{equation}
The blowup rate of~$L(t)$ is a square-root, i.e.,
\begin{equation}	\label{eq:intro_psiQ_blowup_rate}
	L(t)\sim \kappa\sqrt{T_c-t},\qquad t\to T_c.
\end{equation}
Numerical simulations and formal calculations suggest that:
\begin{enumerate}
    \item The self-similar profile of singular peak-type solutions of the
		NLS~\eqref{eq:NLS} is an admissible solution of~\eqref{eq:ODE4Q}.
		Since~$Q(\rho)$ attains its maximal amplitude at~$\rho=0$, the solution
		is peak-type. 
    \item The constant~$\kappa$ of the blowup
		rate~\eqref{eq:intro_psiQ_blowup_rate} is equal
		to~$\kappa_Q(\sigma,d)$.
		Hence, in particular,~$\kappa$ is universal (i.e., is independent of the
		initial condition~$\psi_0$).
\end{enumerate}

\noindent The admissible solution~$Q(\rho)$ satisfies \[
	|Q(\rho)| \sim C\cdot\rho^{-1/\sigma}, \qquad \rho\to\infty\,.
\]
Thus,~$Q\notin L^2(\Real)$.
Nevertheless,~$Q(\rho)$ can be the self-similar profile of~$H^1$ solutions,
since~$\psi(r,t)\sim \psi_Q$ only for~$r\in [0,r_c]$, see~\cite{Berge-92}.

\subsection{\label{ssec:supercrit_peak_analysis}Informal analysis}

As in the supercritical NLS, we expect that singular peak-type solutions of the
supercritical BNLS collapse as
\begin{equation}	\label{eq:SC_quasi_parts}
	\psi(t,r) \sim
	\begin{cases}
		\psi_B(t,r)	&	0\le r\le r_c, \\
		\psi_\text{non-singular}(t,r)\quad	&	r\ge r_c ,
	\end{cases}
\end{equation}
where~$\psi_B$ is a self-similar profile, to be determined.
As in the supercritical NLS, the singular region~$r\in[0,r_c]$, is constant in
the coordinate~$r$.
Therefore, in the rescaled variable~$\rho=r/L(t)$, the singular
region~$\rho\in[0,r_c/L(t)]$ becomes infinite as~$L(t)\to0$.
This is again in contradistinction with the critical-BNLS case, where the singular
region~$\rho\in[0,\rho_c]$ is constant in the rescaled variable~$\rho$, but shrinks to
a point in the original coordinate~$r$.

The BNLS~\eqref{eq:BNLS} is invariant under the dilation symmetry~$
	r\mapsto \frac rL,
	t\mapsto \frac t{L^4},
	\psi\mapsto\frac{1}{L^{2/\sigma}}\psi
$, where~$L$ is a constant.
In the supercritical case~$\sigma d>4$, this suggests that \[
	\psi_B(t,r) = 
		\frac1{L^{2/\sigma}(t)}	B(\rho)	e^{i\tau(t)}, \qquad
		\rho=\frac rL.
\]
Similar arguments as in Section~\ref{ssec:crit_peak_analysis} show that~$
	\tau(t) = \int_{s=0}^{t}\frac{1}{L^{4}(s)}ds.
$
Therefore, as in the supercritical NLS, we expect the collapsing part of the
solution to approach the self-similar profile\footnote{
	Note that in the critical case~$2/\sigma=d/2$, hence the self-similar
	profile~\eqref{eq:supercrit_peak_QSS-0} reduces to~\eqref{eqs:crit_peak_QSS}.
}
	\begin{equation}	\label{eq:supercrit_peak_QSS-0} 
		\psi_B(t,r) = 
			\frac1{L^{2/\sigma}(t)}	B(\rho)	e^{i\tau(t)},\qquad 
		\rho = \frac rL,\qquad 
		\tau(t) = \int_{s=0}^{t}\frac{1}{L^{4}(s)}ds.
	\end{equation}

Theorem~\ref{thrm:low-bound} showed that in the critical case, if 
$L(t)\sim \kappa(\TCrit-t)^p$, then~$p\ge1/4$.
The following Lemma extends this result to peak-type solutions of the
supercritical BNLS.
\begin{lem} \label{lem:supercrit_peak_rate_profile}
	Let~$\sigma d>4$, and let~$\psi$ be a peak-type singular solution of the BNLS
	that collapses with the~$\psi_B$ profile~\eqref{eq:supercrit_peak_QSS-0}.
	If~$L(t)\sim\kappa(\TCrit-t)^p$, then~$p \ge \frac14$.
	Furthermore,~$p=1/4$ if and only if the self-similar profile~$B(\rho)$
	satisfies the equation
	\begin{equation}	\label{eq:supercrit_peak_ODE-0}
		-B(\rho) + i\frac{\kappa^4}4 \left(
				\frac{2}{\sigma}B + \rho B^\prime 
			\right)
			- \Delta_\rho B + |B|^{2\sigma}B = 0,
		\qquad	\kappa>0.
	\end{equation}
	\begin{proof}
		If~$\psi\sim\psi_B$, then \[
		\Delta^2\psi \sim \Delta\psi_B\sim
			\frac{e^{i\tau}}{L^{4+2/\sigma}} \Delta_\rho B , \qquad 
		\abs{\psi}^{2\sigma}\!\psi \sim
			\abs{\psi_B}^{2\sigma}\!\psi_B =
			\frac{e^{i\tau}}{L^{4+2/\sigma}}|B|^{2\sigma}B,
		\] and 
		\[
			\psi_t \sim \left( \psi_B \right)_t \sim
			\frac{e^{i\tau}}{L^{4+2/\sigma}} \left\{
				iB	- L_tL^3 \left(	\frac2\sigma B + \rho B_\rho \right)
			\right\}\,.
		\] 
		Hence, the equation for~$B$ is
		\begin{equation}	\label{eq:peak_SCBNLS_ODE_1}
			-B - i \left( 
				\lim_{t\to\TCrit} L_tL^3
			\right) \left(	
				\frac 2\sigma B + \rho B^\prime 
			\right)
			-\Delta^2_\rho B +\abs{B}^{2\sigma}B = 0,
		\end{equation}
		implying that~$L_t L^3$ should be bounded as~$t\to \TCrit$.
		Since~$L^3L_t \sim - p\kappa^4(\TCrit-t)^{4p-1}$, it follows
		that~$p\ge\frac14$.
		If~$p=1/4$, then~$L^3L_t \to - \frac{\kappa^4}4$, and
		equation~\eqref{eq:peak_SCBNLS_ODE_1} reduces
		to~\eqref{eq:supercrit_peak_ODE-0}.
	\end{proof}
\end{lem}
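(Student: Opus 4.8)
The plan is to substitute the self-similar ansatz~$\psi\sim\psi_B$ of~\eqref{eq:supercrit_peak_QSS-0} directly into the BNLS~\eqref{eq:BNLS} and to demand that the limiting profile~$B(\rho)$ solve a $t$-\emph{independent} equation; the lower bound on~$p$ then emerges as the consistency condition that makes this possible. First I would record how each term of the equation scales. Since~$B$ enters~$\psi_B$ only through~$\rho=r/L$, every spatial derivative carries a factor~$1/L$, so the biharmonic term obeys~$\Delta^2\psi_B = L^{-(4+2/\sigma)}e^{i\tau}\,\Delta_\rho^2 B$, while the nonlinearity gives~$\abs{\psi_B}^{2\sigma}\psi_B = L^{-(4+2/\sigma)}e^{i\tau}\abs{B}^{2\sigma}B$; crucially, both share the \emph{same} prefactor~$L^{-(4+2/\sigma)}e^{i\tau}$.

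The key computation is the time derivative~$(\psi_B)_t$, which I would obtain by the product rule applied to the three~$t$-dependent factors~$L^{-2/\sigma}$,~$B(r/L)$ and~$e^{i\tau}$. Differentiating the phase produces the leading term~$i\tau_t = iL^{-4}$, whereas differentiating the amplitude~$L^{-2/\sigma}$ and the argument~$\rho=r/L$ each yield a factor proportional to~$L_t/L$; after collecting powers, all three contributions again carry the prefactor~$L^{-(4+2/\sigma)}e^{i\tau}$, giving~$(\psi_B)_t = L^{-(4+2/\sigma)}e^{i\tau}\{\,iB - L_tL^3(\tfrac2\sigma B+\rho B_\rho)\,\}$ as claimed. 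Inserting the three expressions into~\eqref{eq:BNLS} and cancelling the common factor~$L^{-(4+2/\sigma)}e^{i\tau}$ yields~\eqref{eq:peak_SCBNLS_ODE_1}, in which the only residual~$t$-dependence resides in the single coefficient~$L^3L_t$.

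From here the conclusion is immediate. For~$B$ to be a genuine ($t$-independent) self-similar profile, the coefficient~$L^3L_t$ must converge as~$t\to\TCrit$. Writing~$L\sim\kappa(\TCrit-t)^p$ gives~$L_t\sim-p\kappa(\TCrit-t)^{p-1}$ and hence~$L^3L_t\sim-p\kappa^4(\TCrit-t)^{4p-1}$, which possesses a finite limit precisely when~$4p-1\ge0$, i.e.~$p\ge\tfrac14$. When~$p=\tfrac14$ the limit equals~$-\kappa^4/4$, and substituting this value into~\eqref{eq:peak_SCBNLS_ODE_1} reproduces exactly~\eqref{eq:supercrit_peak_ODE-0}. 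For the converse I would match coefficients: if~$B$ solves~\eqref{eq:supercrit_peak_ODE-0} with~$\kappa>0$, comparison with~\eqref{eq:peak_SCBNLS_ODE_1} forces~$\lim_{t\to\TCrit}L^3L_t=-\kappa^4/4\ne0$; since~$L^3L_t\sim-p\kappa^4(\TCrit-t)^{4p-1}$ tends to a nonzero constant only when~$4p-1=0$, this pins~$p=\tfrac14$.

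The argument is a formal asymptotic matching rather than a rigorous derivation, so the main obstacle is conceptual rather than computational: the entire lower bound rests on the stipulation that~$B$ be truly $t$-independent, which is what converts the scaling relation~$L^3L_t\sim-p\kappa^4(\TCrit-t)^{4p-1}$ into the constraint~$p\ge\tfrac14$. The only step demanding care is the product-rule evaluation of~$(\psi_B)_t$, where the three contributions must be assembled over the common power~$L^{-(4+2/\sigma)}$; this mirrors the critical-case calculation of Section~\ref{ssec:crit_peak_analysis} with~$d/2$ replaced by~$2/\sigma$, so nothing new arises beyond careful bookkeeping of the exponents.
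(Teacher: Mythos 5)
Your proposal is correct and follows essentially the same route as the paper: substitute the ansatz~\eqref{eq:supercrit_peak_QSS-0} into the BNLS, cancel the common factor~$L^{-(4+2/\sigma)}e^{i\tau}$ to obtain~\eqref{eq:peak_SCBNLS_ODE_1}, and demand that the coefficient~$L^3L_t$ converge, which with~$L^3L_t\sim-p\kappa^4(\TCrit-t)^{4p-1}$ gives~$p\ge\tfrac14$ and the limit~$-\kappa^4/4$ exactly when~$p=\tfrac14$. Your explicit treatment of the converse direction (matching the nonzero coefficient~$-\kappa^4/4$ in~\eqref{eq:supercrit_peak_ODE-0} to force~$4p-1=0$) is slightly more complete than the paper's one-line statement, but it is the same argument.
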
 

Let us consider the fourth-order nonlinear ODE~\eqref{eq:supercrit_peak_ODE-0}
for the self-similar profile~$B(\rho)$.
Its solution requires four boundary conditions, and the determination of the
parameter~$\kappa$.
Radial symmetry implies that~$B^\prime(0)=B^{\prime\prime\prime}(0)=0$.
Since the solution is invariant up to rescaling~$
	B(\rho)\to\lambda^{2/\sigma}B(\lambda\rho)
$, one can set, with no loss of generality, $B(0)=1$.
Therefore, we require two additional constraints in order to determine~$\kappa$
and~$B^{\prime\prime}_0$.
We recall that in the supercritical NLS the ``admissible value'' of~$\kappa$ is
determined from the requirements that~$Q$ has a zero Hamiltonian and a
monotonically-decreasing amplitude, see Section~\ref{ssec:supercrit_peak_NLS_review}.
For the BNLS, the following informal argument suggests that the zero-Hamiltonian
condition should also holds.
Indeed, from Hamiltonian conservation it follows that~$
	H\left[ \psi_B \right]
$ is bounded, because otherwise the non-singular region would also have an infinite
Hamiltonian.
Calculating the Hamiltonian of~$\psi_B$, we have 
\begin{equation*}
	H\left[ \psi_B \right] \sim 
		L^{-4/\sigma+d-4} \left[ 
		\int_{\rho=0}^{r_c/L(t)} \left( 
				\abs{ \Delta_\rho B(\rho) }^2 
				-\frac{1}{1+\sigma}\abs{B}^{2+2\sigma}
			\right) \rho^{d-1} d\rho
		\right].
\end{equation*}
From $H^2$-subcriticality, see~\eqref{eq:admissible-range}, it follows
that~$L^{-4/\sigma-4+d}\to\infty$ as~$L\to0$.
Therefore, if~$H\left[ \psi_B \right]$ remains bounded as~$t\to\TCrit$ then 
\begin{equation}	\label{eq:B_Hamiltonian}
	H[B] = \int_{\rho=0}^{\infty} \left(
			\abs{ \Delta_\rho B }^2 
			-\frac1{1+\sigma} \abs{ B }^{2+2\sigma} 
		\right)
		\rho^{d-1}d\rho
	=0.
\end{equation}

WKB analysis of~\eqref{eq:supercrit_peak_ODE-0} shows that, see
Appendix~\ref{app:WKB}, \[
	B(\rho)\sim 
		c_1B_1(\rho)
		+c_2B_2(\rho)
		+c_3B_3(\rho)
		+c_4B_4(\rho)
		,\qquad
	\rho\to\infty,
\] where 
\begin{eqnarray*} 
	B_1(\rho) &\sim &
		\rho^{-\frac 2\sigma -i\frac{1}{b^3}}, \\
	B_2(\rho) &\sim&
		\frac{1}{\rho^{\frac{2}{3\sigma}(\sigma d-1)}}
		\exp\left( 
			-i\frac{3}{4}b\rho^{4/3}
			-i\frac{1}{3b^3} \log(\rho)
		\right), \\
	B_3(\rho) &\sim&
		\frac{
			\exp\left( 
			\frac{3\sqrt{3}}{8}b \rho^{4/3}
			\right)
		}{\rho^{\frac{2}{3\sigma}(\sigma d-1)}}
		\exp\left( 
			+i\frac{3}{8}b\rho^{4/3}
			-i\frac{1}{3b^3} \log(\rho)
		\right), \\
	B_4(\rho) &\sim&
		\frac{
			\exp\left( - ~
				\frac{3\sqrt{3}}{8}b \rho^{4/3}
			\right)
		}{\rho^{\frac{2}{3\sigma}(\sigma d-1)}}
		\exp\left( 
			+i\frac{3}{8}b\rho^{4/3}
			-i\frac{1}{3b^3} \log(\rho)
		\right) \\
\end{eqnarray*}
and $b=\left( \kappa^4/4 \right)^{1/3}$.
Equation~\eqref{eq:supercrit_peak_ODE-0} therefore has two
algebraically-decaying solutions,~$B_1$ and~$B_2$,
an exponentially-increasing solution~$B_3$,
and an exponentially-decreasing solution~$B_4$. 
Since~$\sigma d>4$, the exponent~$
	\frac{2}{3\sigma}(\sigma d-1)
$ of~$B_2$ is larger that the exponent~$\frac 2\sigma$ of~$B_1$, 
hence~$B_1\gg B_2$ as~$\rho\to \infty$.

\begin{lem} \label{lem:admissible_B23}
	Let $B(\rho)$ be a zero-Hamiltonian solution of~\eqref{eq:supercrit_peak_ODE-0}.
	Then, $c_2= c_3= 0$ and \[
		B(\rho) \sim c_1B_1(\rho) 
		,\qquad
		\rho\to\infty.
	\]
	Furthermore,~$B^{\prime\prime}\in L^2$.
%
\end{lem}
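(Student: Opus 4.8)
The plan is to read the coefficients $c_1,\dots,c_4$ off the far-field expansion supplied by the WKB analysis of Appendix~\ref{app:WKB}, use the zero-Hamiltonian hypothesis to kill the two ``bad'' modes $B_2$ and $B_3$, and then obtain the integrability statement directly from the surviving algebraic tail. First I would dispose of $c_3$: the mode $B_3$ grows like $\exp\!\big(\tfrac{3\sqrt3}{8}b\rho^{4/3}\big)$, so if $c_3\neq0$ then $\Delta_\rho B$ grows exponentially and $\int_0^\infty\abs{\Delta_\rho B}^2\rho^{d-1}d\rho=+\infty$. This already contradicts the assumption that the Hamiltonian~\eqref{eq:B_Hamiltonian} is defined (let alone that it vanishes), so $c_3=0$. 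No analogous statement is needed for $c_4$, which is exponentially small and hence harmless; the assertion $B\sim c_1 B_1$ concerns only the dominant algebraic term, and $B_1\gg B_2\gg B_4$ as $\rho\to\infty$.

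The heart of the proof is $c_2=0$. Here I would derive a Pohozaev/flux identity from~\eqref{eq:supercrit_peak_ODE-0} by multiplying by $\rho^{d-1}\bar B$ and by the dilation multiplier $\rho^{d-1}\overline{(\tfrac2\sigma B+\rho B')}$, integrating over $[0,R]$, and taking the appropriate combination of real and imaginary parts. The anti-self-adjoint term $i\tfrac{\kappa^4}4(\tfrac2\sigma B+\rho B')$ contributes a bulk term with the definite-sign factor $\tfrac2\sigma-\tfrac d2=\tfrac{4-\sigma d}{2\sigma}<0$, while the other bulk terms carry $\int\abs{\Delta_\rho^2 B\text{-energy}}$, $\int\abs B^2$ and $\int\abs B^{2(\sigma+1)}$. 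Since $B\sim c_1 B_1\sim\rho^{-2/\sigma}\notin L^2$ in the supercritical regime, several of these bulk integrals diverge as $R\to\infty$; the point is that at finite $R$ every term is finite, and in the limit the bulk divergences are exactly matched by the divergent boundary terms generated by $B_1$. What remains after this cancellation is a finite relation equating $H[B]$ to a net far-field flux.

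Using the WKB asymptotics I would then show that the non-oscillatory mode $B_1$ and the exponentially decaying mode $B_4$ contribute no surviving flux: the $B_1$ self-term in the boundary concomitant scales like $R^{\,d-4-4/\sigma}\to0$, which is precisely where $H^2$-subcriticality~\eqref{eq:admissible-range} enters. By contrast, the oscillatory mode $B_2$, through its interference with $B_1$, produces a contribution that cannot be absorbed unless its amplitude $c_2$ vanishes. The main obstacle is controlling exactly these oscillatory far-field terms: the $B_1$--$B_2$ cross terms carry the fast phase $e^{\pm i\frac34 b\rho^{4/3}}$ and grow algebraically in $\rho$, so the boundary concomitant does not converge termwise. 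The delicate step is to integrate by parts against this non-stationary phase (equivalently, to push the WKB expansion to sufficient order), showing that after the oscillatory pieces average out the net surviving flux is proportional to $\abs{c_2}^2$ (or to $\IM(c_1\bar c_2)$) with a nonzero coefficient; granting this, $H[B]=0$ forces $c_2=0$, and $B\sim c_1 B_1$ follows at once.

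Finally, the claim $B''\in L^2$ is immediate once $c_2=c_3=0$. With $B\sim c_1\rho^{-2/\sigma}$ one differentiates the asymptotics to get $B''\sim\rho^{-2/\sigma-2}$, so that $\int^\infty\abs{B''}^2\rho^{d-1}d\rho\sim\int^\infty\rho^{\,d-5-4/\sigma}d\rho$ converges exactly when $d-4-4/\sigma<0$, i.e. when $\sigma(d-4)<4$, which is the $H^2$-subcritical condition~\eqref{eq:admissible-range}. Smoothness of the regular radial solution near $\rho=0$ (using $B'(0)=B'''(0)=0$) handles the origin, completing the argument.
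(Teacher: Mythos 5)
Your treatment of $c_3$ (exponential growth of $B_3$ makes $\int\abs{\Delta_\rho B}^2\rho^{d-1}d\rho$ infinite) and your final verification that $B''\in L^2$ once $B\sim c_1\rho^{-2/\sigma}$ both match the paper. The problem is the central step, $c_2=0$. You route it through a Pohozaev/flux identity whose decisive ingredient --- that after the bulk divergences cancel against the boundary concomitant, the surviving far-field flux is proportional to $\abs{c_2}^2$ (or $\IM(c_1\bar c_2)$) with a \emph{nonzero} coefficient --- is never established; you explicitly write ``granting this''. That is exactly the content of the claim, so as written the argument has a genuine gap. Worse, the setup has its own unverified cancellation claims (that the divergent bulk integrals generated by $B_1\notin L^2$ are exactly absorbed by divergent boundary terms), and the oscillatory $B_1$--$B_2$ cross terms you flag as ``delicate'' are precisely where such an identity could fail to close.

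The paper's own argument is far more elementary and bypasses all of this: it is a mode-by-mode integrability check, and it uses only the \emph{finiteness} of $H[B]$, not its vanishing. Because each derivative of the phase $e^{-i\frac34 b\rho^{4/3}}$ in $B_2$ costs a factor $\rho^{1/3}$, one has $\abs{B_2''}^2\sim\rho^{-\frac{4}{3\sigma}(\sigma d-1)+\frac43}$, and the weighted integral $\int\abs{B_2''}^2\rho^{d-1}d\rho$ diverges precisely when $\sigma(d-4)<4$, i.e.\ throughout the $H^2$-subcritical regime; meanwhile $B_1''\in L^2$ and both $B_1,B_2\in L^{2+2\sigma}$, so the potential term of the Hamiltonian stays finite and no cancellation between kinetic and potential parts is possible. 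Hence $H[B]$ cannot even be finite unless $c_2=0$. If you want to salvage your write-up, replace the entire flux computation with this one-line comparison of $\norm{B_1''}_2$ and $\norm{B_2''}_2$; the rest of your proposal can stand.
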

\begin{proof}
	The exponentially increasing solution $B_3$ must vanish identically if the
	integrals are to converge, hence~$c_3=0$.
	Next, the $\rho^{4/3}$ phase term in $B_2$ implies that \[
		\abs{B_2^{\prime\prime}}^2 \sim 
			\rho^{-\frac{4}{3\sigma}(\sigma d-1)+\frac 43}.
	\] 
	Hence, the integral \[
	\norm{B_2^{\prime\prime}}_2^2
		\sim \int \rho^{-\frac{4}{3\sigma}(\sigma d-1-\sigma) }
		\rho^{d-1}d\rho
	\]
	diverges in the $H^2$-subcritical regime~$\sigma(d-4)<4$, 
	i.e.,~$B_2^{\prime\prime} \notin L^2$. 
	Since, in addition, \[
		B_1^{\prime\prime} \in L^2, \qquad
		B_1\in L^{2+2\sigma}, \qquad
		B_2\in L^{2+2\sigma},
	\]
	the Hamiltonian can be finite only if~$c_2=0$, in which
	case~$B^{\prime\prime}\in L^2$.
\end{proof}

Lemma~\ref{lem:admissible_B23} shows that the condition~$H[B]=0$ imposes the two
constraints~$c_2=0$ and~$c_3=0$.
Hence, zero-Hamiltonian solutions of equation~\eqref{eq:supercrit_peak_ODE-0}
satisfy the five boundary conditions \[
		B(0)=1,	\quad
		B^\prime(0)=0, \quad
		B^{\prime\prime\prime}(0)=0, \qquad
		c_2\left( B_0^{\prime\prime},\kappa \right) 
		= c_3\left( B_0^{\prime\prime},\kappa \right)
		= 0.
	\]
Therefore they form a discrete set of solutions and of values of~$\kappa$.
We conjecture that the additional condition of monotonicity of~$|B|$ will lead to a
unique admissible solution~$B$ and a unique value of~$\kappa$.

\begin{cor}	\label{cor:admissible_B23_power}
	Let $B(\rho)$ be a zero-Hamiltonian solution
	of~\eqref{eq:supercrit_peak_ODE-0}.
	Then, $\norm{B}_2=\infty$.
	Nevertheless,~$
		\displaystyle \lim_{t\to\TCrit}
			\norm{\psi_B}_{L^2(r<r_c)}<\infty
	$.
\end{cor}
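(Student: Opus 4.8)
The plan is to read off everything from the algebraic tail of the admissible profile established in Lemma~\ref{lem:admissible_B23}. Since $c_2=c_3=0$ there, we have $B(\rho)\sim c_1 B_1(\rho)$ as $\rho\to\infty$, so that $\abs{B(\rho)}\sim\abs{c_1}\,\rho^{-2/\sigma}$ (the imaginary part of the exponent of $B_1$ contributes only an oscillatory phase and does not affect the modulus). For the first claim I would write the radial $L^2$ norm as
\[
	\norm{B}_2^2 = \int_0^\infty \abs{B(\rho)}^2\rho^{d-1}\,d\rho,
\]
and observe that the integrand decays like $\rho^{d-1-4/\sigma}$ at infinity. This tail is non-integrable precisely when $d-1-4/\sigma\ge-1$, i.e.\ when $\sigma d\ge4$; in the supercritical regime $\sigma d>4$ the divergence is strict, so $\norm{B}_2=\infty$.

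For the second claim I would substitute the self-similar profile~\eqref{eq:supercrit_peak_QSS-0} and change variables $\rho=r/L$. Using $\abs{\psi_B}^2 = L^{-4/\sigma}\abs{B(r/L)}^2$ together with $r^{d-1}\,dr = L^d\rho^{d-1}\,d\rho$, one obtains the key identity
\[
	\norm{\psi_B}_{L^2(r<r_c)}^2 = L^{d-4/\sigma}\int_0^{r_c/L}\abs{B(\rho)}^2\rho^{d-1}\,d\rho.
\]
As $t\to\TCrit$ we have $L\to0$, so the upper limit $r_c/L\to\infty$ and the integral diverges exactly as in the first claim. The point is that the two effects balance. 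Splitting the integral at a fixed $\rho_0$, the bounded part is a constant times $L^{d-4/\sigma}\to0$ (here $\sigma d>4$ enters again), while the tail contributes $\int_{\rho_0}^{r_c/L}\abs{c_1}^2\rho^{d-1-4/\sigma}\,d\rho\sim \frac{\abs{c_1}^2}{d-4/\sigma}\,(r_c/L)^{d-4/\sigma}$. Multiplying by the prefactor $L^{d-4/\sigma}$ cancels the powers of $L$ exactly, leaving the finite limit $\norm{\psi_B}_{L^2(r<r_c)}^2\to \abs{c_1}^2 r_c^{\,d-4/\sigma}/(d-4/\sigma)$.

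The main obstacle is to make the asymptotic matching rigorous, since $B$ is known only through its WKB expansion and carries the subleading modes $B_2$ and $B_4$ in addition to $B_1$. I would handle this by noting that $B_1\gg B_2$ as $\rho\to\infty$ (as observed just before Lemma~\ref{lem:admissible_B23}) and that $B_4$ decays exponentially, so their contribution to the tail integral is of strictly lower order in $r_c/L$ and is annihilated in the limit; the error $B-c_1B_1$ must be controlled uniformly on $[\rho_0,r_c/L]$ for this to go through. Conceptually, the exact cancellation of the $L$-powers is not a coincidence: $\norm{\psi_B}_{L^2(r<r_c)}^2$ is invariant under the BNLS dilation symmetry used to build $\psi_B$, so the only real question is whether the spatial integral over the \emph{fixed} window $r<r_c$ converges, which it does because the truncation radius $r_c$ stays bounded while $L\to0$.
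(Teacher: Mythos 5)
Your proposal is correct and follows essentially the same route as the paper: both deduce $\abs{B}\sim\abs{c_1}\rho^{-2/\sigma}$ from Lemma~\ref{lem:admissible_B23}, conclude that $\int\abs{B}^2\rho^{d-1}d\rho$ diverges like $\rho^{d-4/\sigma}$ in the supercritical regime, and then use the identity $\norm{\psi_B}_{L^2(r<r_c)}^2=L^{d-4/\sigma}\int_0^{r_c/L}\abs{B}^2\rho^{d-1}d\rho$ to see the powers of $L$ cancel exactly. Your extra remarks on controlling the subleading WKB modes and on the dilation-invariance interpretation go slightly beyond the paper's (deliberately informal) argument but do not change the approach.
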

\begin{proof}
	Since $B(\rho)\sim c_1B_1(\rho)$, 
	\[
		\norm{B_1}_2^2 
		\sim C \int_{\rho=0}^{\infty}
			\rho^{-4/\sigma+d-1} d\rho 
			\sim C \rho^{d-4/\sigma} \Big|_{\rho=0}^{\infty}
			= \infty.
	\]
	Following the arguments of~\cite{Berge-92}, the profile~$\psi_B$ satisfies
	\begin{eqnarray*}
		\norm{\psi_B}_{L^2(r<r_c)}^2 
		&=&
		L^{d-4/\sigma}(t) \cdot 
		\int_{\rho=0}^{r_c/L(t)}
			\abs{B(\rho)}^2
			\rho^{d-1}d\rho \\
		&\sim& 
		L^{d-4/\sigma}(t) \cdot \left( 
			C \rho^{d-4/\sigma} \Big|_{\rho=0}^{r_c/L(t)}
		\right) 
		=
		\mathcal{O}(1).
	\end{eqnarray*}
\end{proof}

In summary, we conjecture the following:
\begin{conj}	\label{conj:supercrit_peak_rate_profile} ~\\
	Let $\psi$ be peak-type singular solution of the supercritical BNLS. 
	Then,
	\begin{subequations} \label{eqs:supercrit_peak_QSS}
		\begin{enumerate}
			\item The collapsing core approaches the self-similar
				profile~$\psi_B$, i.e.,
				\begin{equation}	\label{eq:supercrit_peak_QSS-1}
					\psi(t,r) \sim \psi_B(t,r), \qquad 
					0\le r\le r_c,
				\end{equation}
				where
				\begin{equation}	\label{eq:supercrit_peak_QSS-2} 
					\psi_B(t,r) = 
						\frac1{L^{2/\sigma}(t)}	B(\rho)	e^{i\tau(t)},\qquad 
					\rho = \frac rL,\qquad 
					\tau(t) = \int_{s=0}^{t}\frac{1}{L^{4}(s)}ds.
				\end{equation}
			\item The self-similar profile~$B(\rho)$ is the solution of 
				\begin{equation}	\label{eq:supercrit_peak_ODE}
					\begin{gathered}
						-B(\rho) + i\frac{\kappa^4}4 \left(
							\frac{2}{\sigma}B + \rho B^\prime 
						\right)
						- \Delta_\rho B + |B|^{2\sigma}B = 0, \\
						B(0)=1,B^\prime(0)=B^{\prime\prime\prime}(0)=0,\qquad
						H[B]=0,
					\end{gathered}
				\end{equation}
				where~$\kappa>0$ and~$H[B]$ is the Hamiltonian of~$B$,
				see~\eqref{eq:B_Hamiltonian}.
			\item In particular, $B(\rho)\neq R(\rho)$.
			\item Equation~\eqref{eq:supercrit_peak_ODE} 
				has a unique ``admissible solution'' with a unique 
				``admissible value'' of~$\kappa=\kappa(\sigma,d)$, such
				that~$|B(\rho)|$ is monotonically decreasing.
				Additionally,~$B(\rho)\sim \rho^{-2/\sigma-i4/\kappa^4}$ 
				as~$\rho\to \infty$.
			\item The admissible solution is the self-similar
				profile~$B$ of~$\psi_B$, see~\eqref{eq:supercrit_peak_QSS-2}.
			\item The blowup rate of singular peak-type solutions is exactly a
					quartic root, i.e.,\begin{equation}	\label{eq:rate_14}
					L(t)\sim\kappa\sqrt[4]{\TCrit-t},
					\qquad \kappa>0.
				\end{equation}
			\item The coefficient~$\kappa$ of the blowup rate of~$L(t)$ is equal
					to the value of~$\kappa$ of the admissible solution~$B$,
					i.e., \[
					\kappa := \lim_{t\to\TCrit} 
						\frac{L(t)}{\sqrt[4]{\TCrit-t}}  
					= \kappa(\sigma,d).
				\] 
				In particular, $\kappa$ is universal (i.e., it does not depend
				on the initial condition).
		\end{enumerate}
	\end{subequations}
\end{conj}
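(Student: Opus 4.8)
Since this statement is asserted as a Conjecture rather than a theorem, my aim is to describe a strategy that would assemble its several parts from the rigorous pieces already in hand --- Theorem~\ref{thrm:low-bound}, Lemma~\ref{lem:supercrit_peak_rate_profile}, Lemma~\ref{lem:admissible_B23} and Corollary~\ref{cor:admissible_B23_power} --- and to isolate which gaps must be closed by a modulation/matched-asymptotics analysis modeled on the supercritical NLS theory of Section~\ref{ssec:supercrit_peak_NLS_review}. The organizing principle is the dichotomy of Lemma~\ref{lem:supercrit_peak_rate_profile}: granting the self-similar ansatz~\eqref{eq:supercrit_peak_QSS-2} with $L(t)\sim\kappa(\TCrit-t)^p$, either $p=1/4$ and $B$ solves the nonlinear eigenvalue problem~\eqref{eq:supercrit_peak_ODE-0}, or $p>1/4$, in which case $L^3L_t\to0$ and $B$ degenerates into a solution of the standing-wave equation~\eqref{eq:stationary_state}. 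The whole conjecture amounts to the claim that, in the supercritical regime, the dynamics select the first alternative.

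First I would settle the ODE statements (parts (2)--(4)) as a self-contained nonlinear eigenvalue problem, independently of any dynamics. Taking $(B_0'',\kappa)$ as two shooting parameters, the local data $B(0)=1$, $B'(0)=B'''(0)=0$ fix the Cauchy problem, while the two transcendental conditions $c_2(B_0'',\kappa)=c_3(B_0'',\kappa)=0$ supplied by Lemma~\ref{lem:admissible_B23} over-determine it by exactly the right count to yield a discrete solution set. Killing the exponentially growing mode gives $c_3=0$; the remaining condition $c_2=0$ is, by Lemma~\ref{lem:admissible_B23}, equivalent to $H[B]=0$ in the $H^2$-subcritical regime~\eqref{eq:admissible-range}, and it forces the pure algebraic tail $B\sim c_1 B_1\sim\rho^{-2/\sigma-i4/\kappa^4}$ asserted in part (4). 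Part (3), that $B\neq R$, is then immediate from Corollary~\ref{cor:admissible_B23_power}, since $B\notin L^2$ while $R\in H^2$. Existence of at least one admissible pair I would obtain from a continuity/degree argument in the two-parameter shooting; uniqueness and the monotonicity of $|B|$ are the weakest links of the ODE analysis, and there I would rely on the numerical continuation of Section~\ref{sec:supercrit_peak}, exactly as is done for the admissible $Q$-profile in the NLS.

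Next I would address the dynamical statements (parts (1), (5)--(7)). The bound $p\ge 1/4$ is already rigorous through Theorem~\ref{thrm:low-bound} and Lemma~\ref{lem:supercrit_peak_rate_profile}; upgrading it to equality reduces to excluding the standing-wave branch $p>1/4$. A collapse onto the $L^2$ ground state $R$ would funnel exactly $\norm{R}_2^2$ of power into the singularity, whereas the supercritical problem carries no finite critical mass, so I would argue --- as for the supercritical NLS, cf.~\cite{Sulem-99} --- that the ground-state scenario is dynamically unstable and the realized profile is the non-$L^2$ admissible $B$. By Lemma~\ref{lem:supercrit_peak_rate_profile} this pins $p=1/4$ and $L^3L_t\to-\kappa^4/4$, which is~\eqref{eq:rate_14} together with the universality of $\kappa$ in part (7), consistent with the numerics of Figure~\ref{fig:supercrit_peak_L3Lt}. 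That $\psi$ locks onto $\psi_B$ only on the fixed window $r\in[0,r_c]$, and that the finite truncated mass of Corollary~\ref{cor:admissible_B23_power} is compatible with a genuine $H^2$ solution despite $B\notin L^2$, I would establish by a matched expansion joining the inner self-similar core to a slowly varying non-singular outer tail, following~\cite{Berge-92}.

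The main obstacle is exactly what keeps the statement a conjecture. First, there is no proof that supercritical BNLS solutions become singular at all, so the entire analysis is conditional on the existence of the blowup it describes; the failure to verify positivity of $V_\text{BNLS}$ in Section~\ref{ssec:varid} and the absence of a pseudo-conformal symmetry remove the tools that make the analogous NLS statements provable. Second, even granting existence, rigorously justifying the ansatz --- controlling $\psi-\psi_B$ and showing that the modulation law for $L(t)$ selects the admissible $\kappa$ rather than drifting --- demands a modulation-stability analysis of the fourth-order profile $B$, and the spectral theory of the biharmonic operator linearized about $B$ is not presently available. I would therefore present parts (2)--(4) as a rigorous ODE/WKB result and parts (1), (5)--(7) as the formal matched-asymptotic consequences corroborated by simulation, which is the honest status reflected in the designation as a conjecture.
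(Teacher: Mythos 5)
Your proposal follows essentially the same route as the paper: the statement is a conjecture supported (not proved), and you assemble it from exactly the ingredients the paper uses --- the dichotomy of Lemma~\ref{lem:supercrit_peak_rate_profile}, the WKB asymptotics and the boundary-condition count $c_2=c_3=0$ of Lemma~\ref{lem:admissible_B23}, Corollary~\ref{cor:admissible_B23_power} for $B\notin L^2$ yet finite truncated mass via the matching of~\cite{Berge-92}, and numerics for selection, uniqueness and universality of~$\kappa$. The only substantive difference is that you propose a heuristic dynamical-instability argument to exclude the $p>1/4$ (standing-wave) branch, whereas the paper settles this purely numerically via the convergence of $L^3L_t$ to a negative constant and the visible discrepancy between the rescaled profile and the ground state; your identification of the genuine gaps (no existence proof of blowup, no modulation/spectral theory for the linearization about~$B$) matches the paper's own assessment.
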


\noindent In Section~\ref{ssec:supercrit_peak_simulations} we provide numerical
evidence in support of Conjecture~\ref{conj:supercrit_peak_rate_profile}.

\subsection{\label{ssec:supercrit_peak_simulations}Simulations}

\begin{figure}
	\centering
	\subfloat[$d=1, \sigma=6$]{\label{fig:supercrit_peak_amp_Ls_1D}%
		\includegraphics[angle=-90,clip,width=0.35\textwidth]%
			{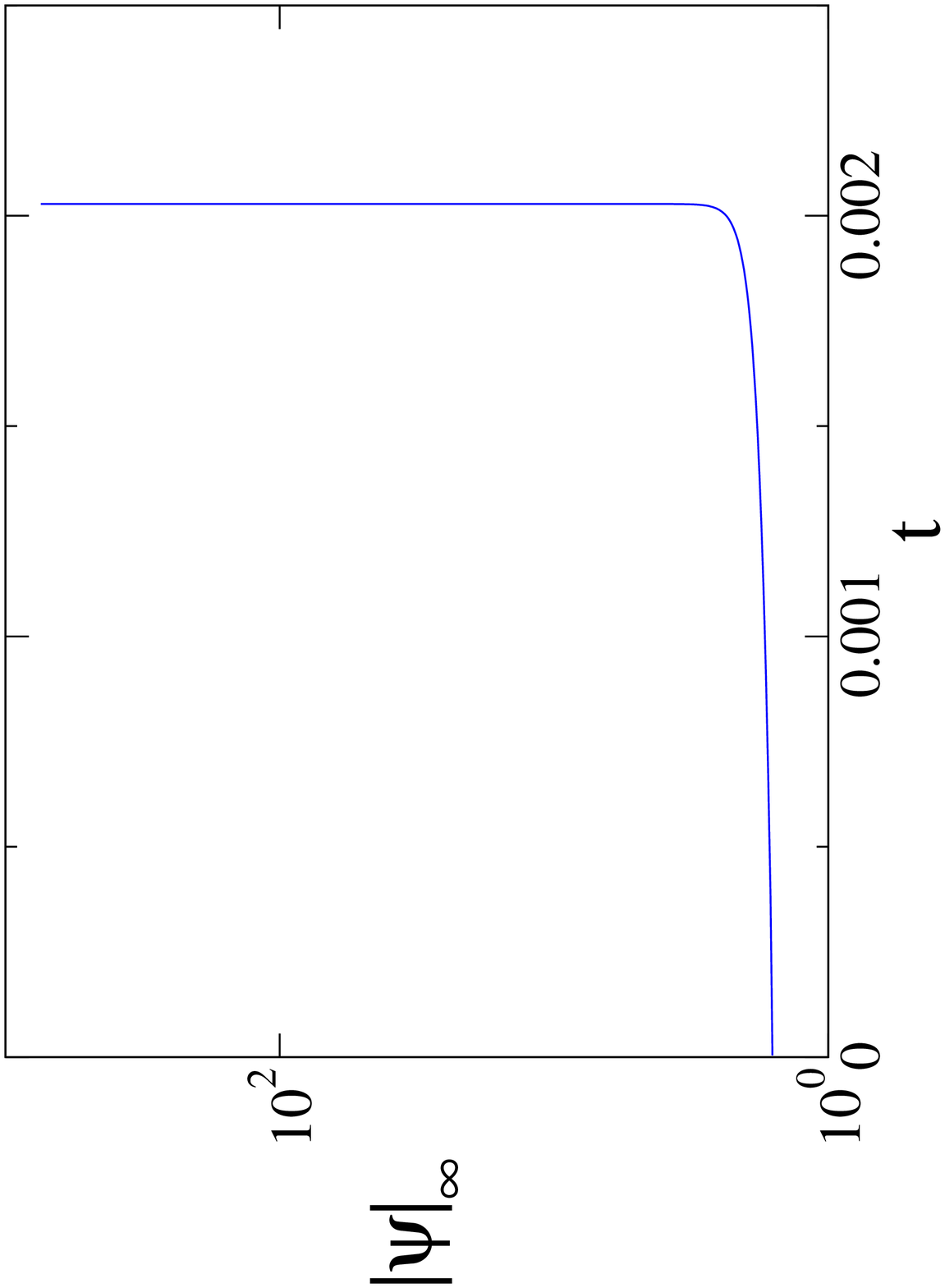}%
	}%
	\subfloat[$d=2, \sigma=3$]{\label{fig:supercrit_peak_amp_Ls_2D}%
		\includegraphics[angle=-90,clip,width=0.35\textwidth]%
			{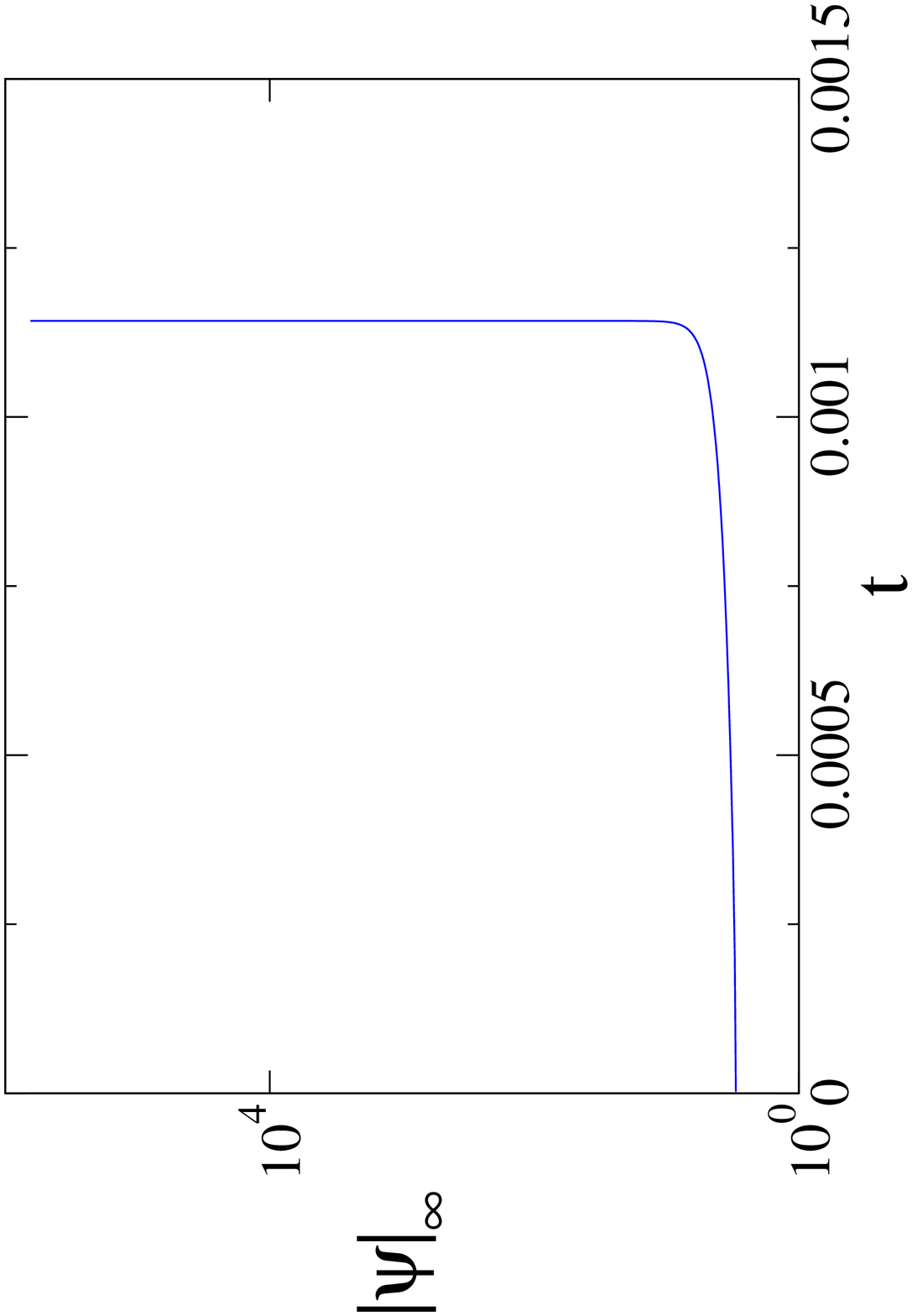}%
	}

	\mycaption{\label{fig:supercrit_peak_amp_Ls}%
		Maximal amplitude of peak-type singular solutions of the supercritical BNLS.
	}
\end{figure}

\begin{figure}
	\centering
	\subfloat[$d=1,\sigma=6$]{\label{fig:supercrit_peak_rescaled_1D}%
		\includegraphics[clip,angle=-90,width=0.45\textwidth]%
			{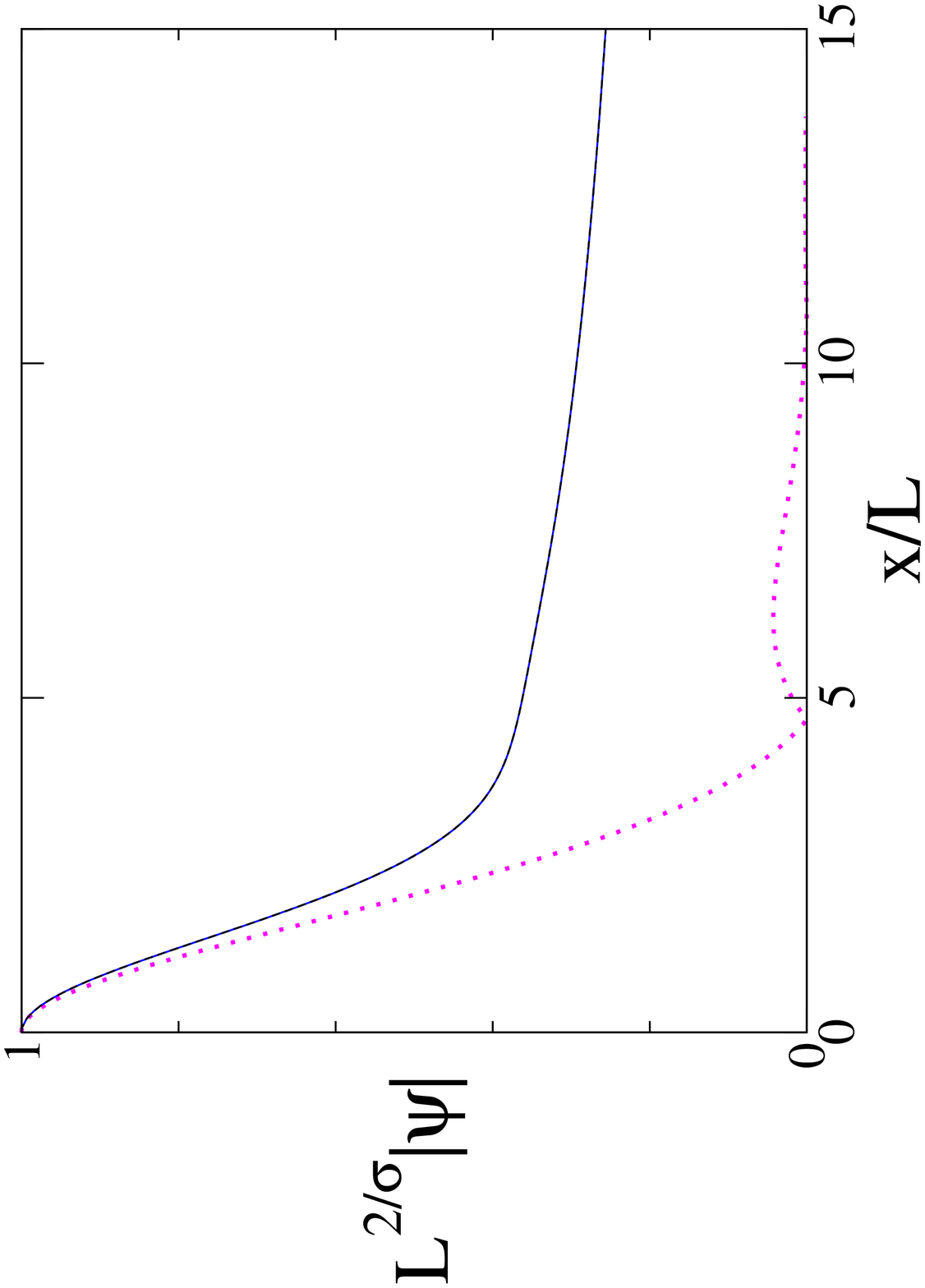}%
	}%
	\subfloat[$d=2,\sigma=3$]{\label{fig:supercrit_peak_rescaled_2D}%
		\includegraphics[clip,angle=-90,width=0.45\textwidth]%
			{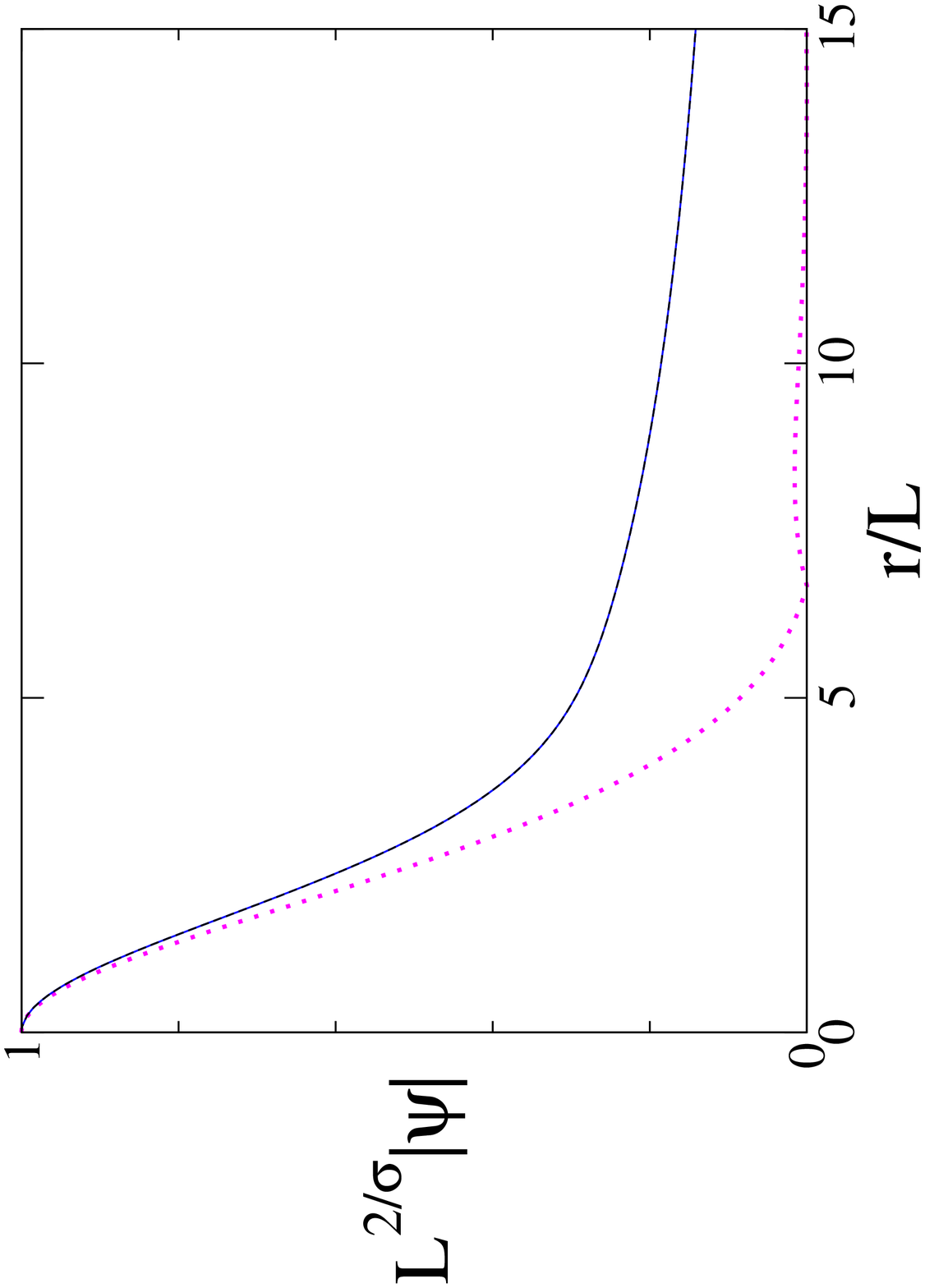}%
	}%

	\mycaption{\label{fig:supercrit_peak_rescaled}
		The solutions of Figure~\ref{fig:supercrit_peak_amp_Ls}, rescaled
		according to~\eqref{eq:psi_rescaled_peak}, at the focusing levels~$1/L=10^4$
		(blue solid line) and~$1/L=10^8$ (black dashed line).
		The magenta dotted line is the rescaled ground-state~$R$.
	}
\end{figure}

\begin{figure}
	\centering
	\subfloat[$d=1,\sigma=6$]{\label{fig:supercrit_peak_rescaled_far_1D}%
		\includegraphics[clip,angle=-90,width=0.45\textwidth]%
			{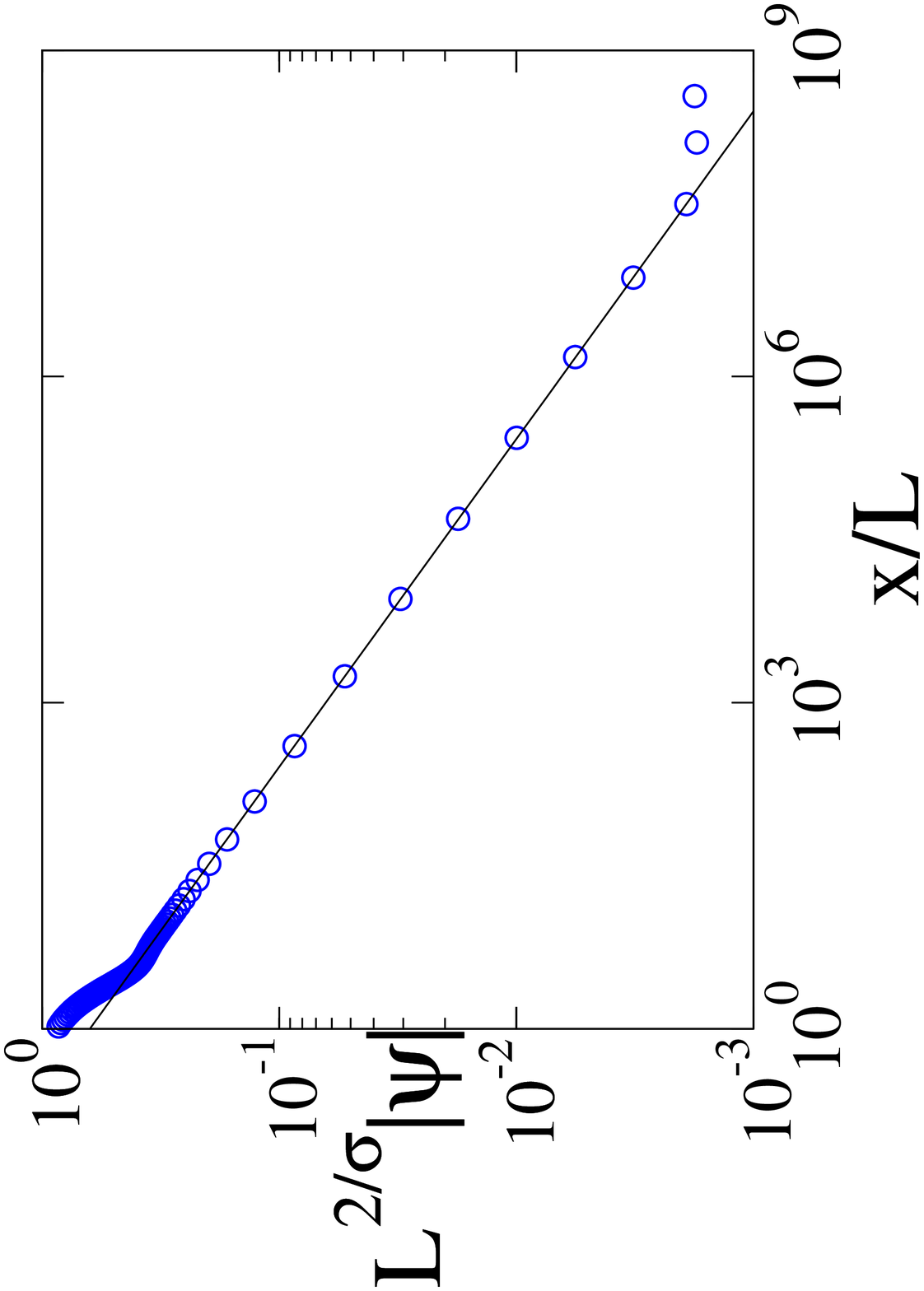}%
	}%
	\subfloat[$d=2,\sigma=3$]{\label{fig:supercrit_peak_rescaled_far_2D}%
		\includegraphics[clip,angle=-90,width=0.45\textwidth]%
			{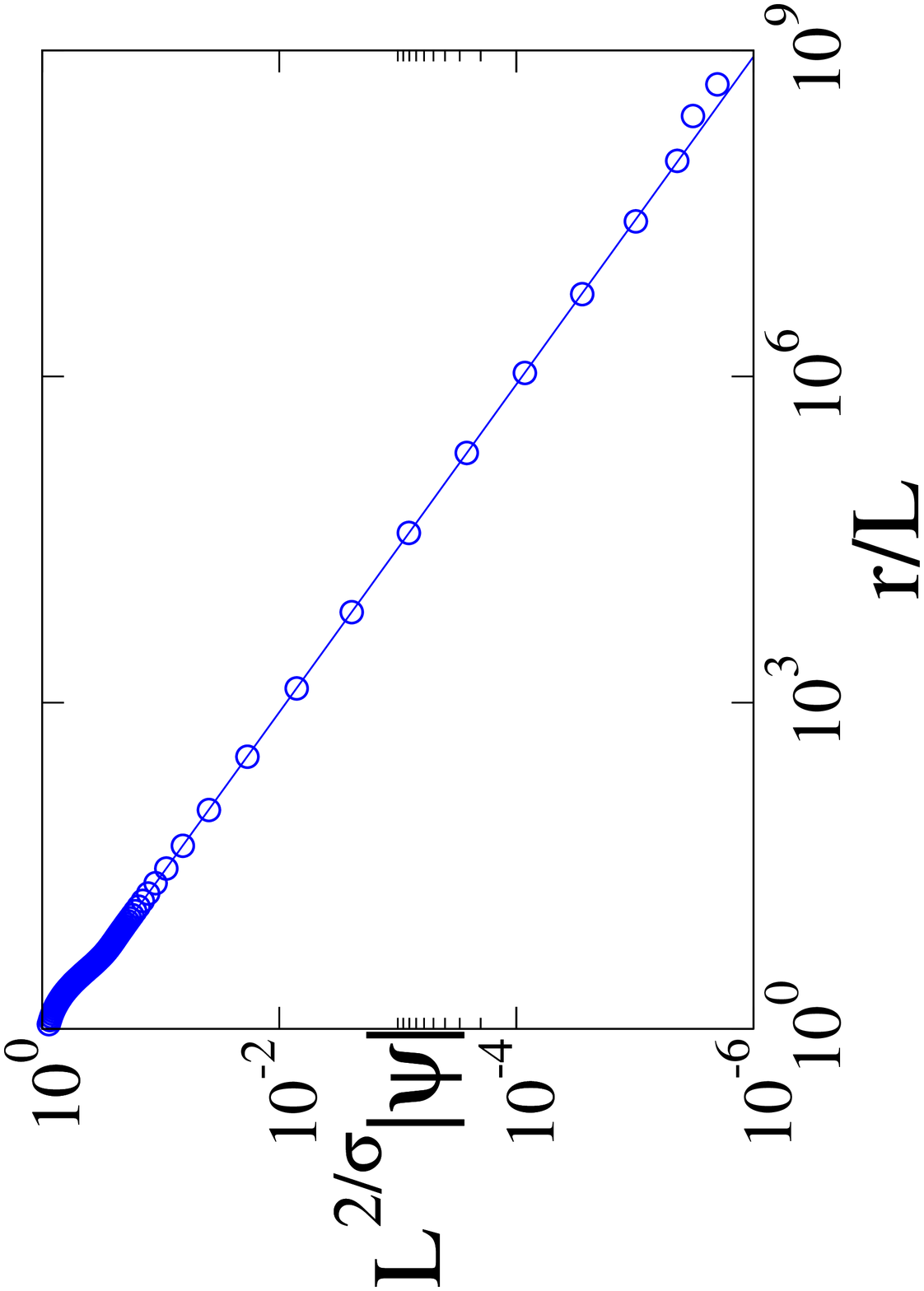}%
	}%

	\mycaption{\label{fig:supercrit_peak_rescaled_far}
		The solutions of Figure~\ref{fig:supercrit_peak_amp_Ls}, rescaled
		according to~\eqref{eq:psi_rescaled_peak}, at focusing the level~$1/L=10^8$
		(circles).
		Solid lines are the fitted curves~$y=0.63\cdot( r/L )^{-0.33}$ (left)
		and~$y=0.85\cdot( r/L )^{-0.66}$ (right).
	}
\end{figure}

\begin{figure}
\centering
	\subfloat[$d=1,\sigma=6$]{%
		\includegraphics[angle=-90,clip,width=0.5\textwidth]%
			{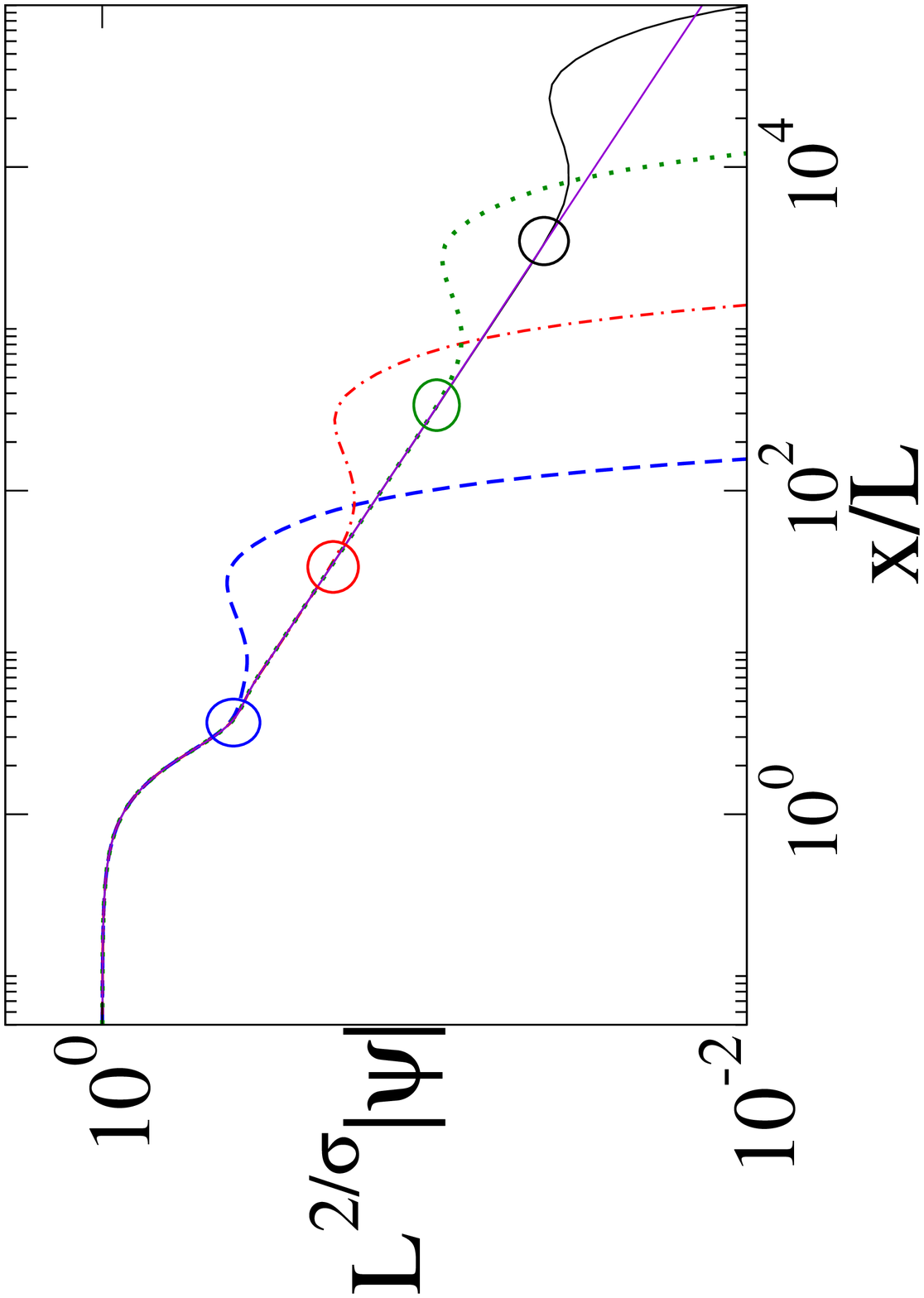}%
	}
	\subfloat[$d=2,\sigma=3$]{%
		\includegraphics[angle=-90,clip,width=0.5\textwidth]%
			{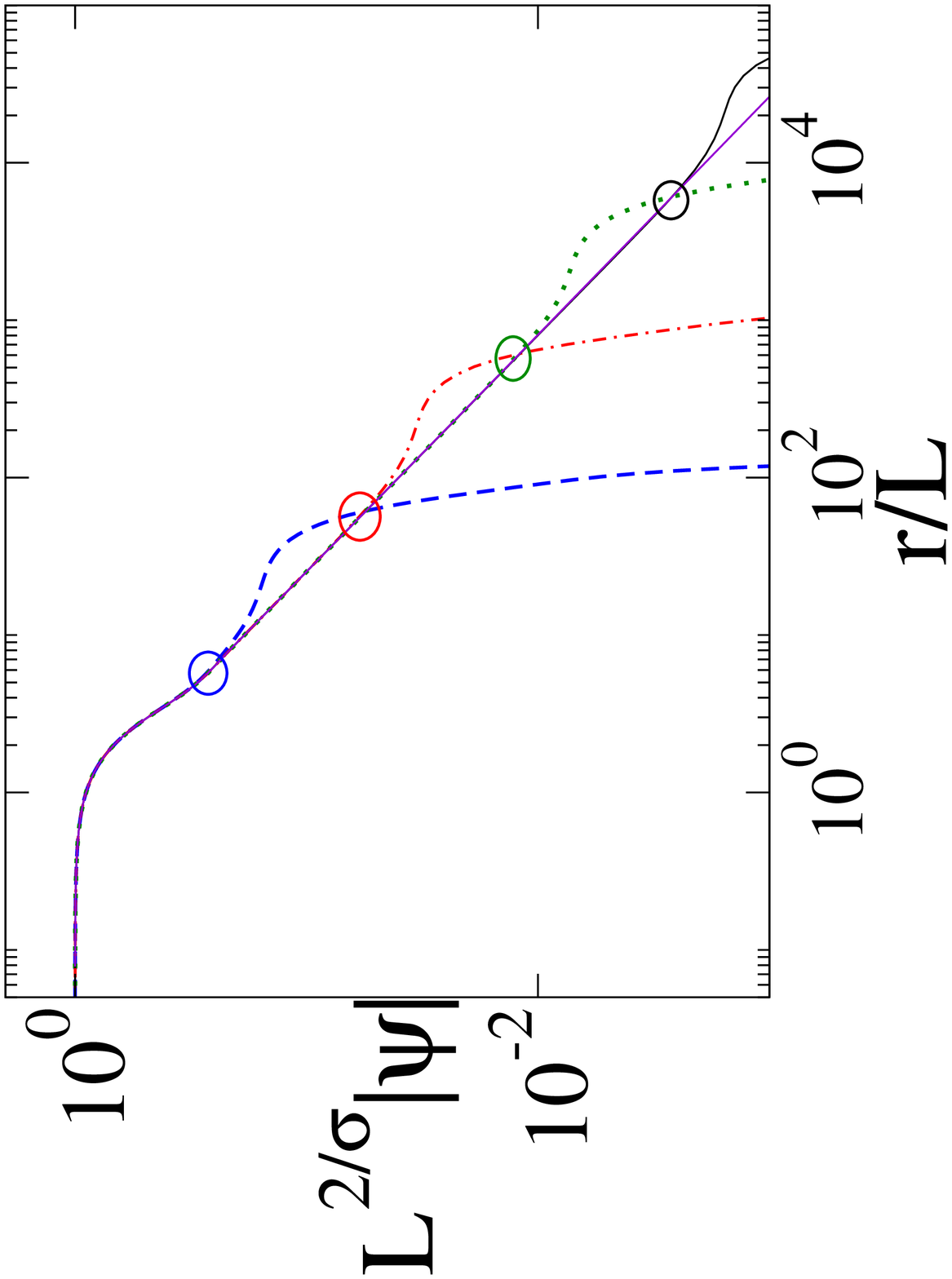}%
	}

	\mycaption{\label{fig:supercrit_peak_convergence}
		Convergence to a self-similar profile. 
		The solutions of Figure~\ref{fig:supercrit_peak_amp_Ls}, rescaled
		according to~\eqref{eq:psi_rescaled_peak}, as a function
		of~$\log(r/L)$, at the focusing levels  
			$L=10^{-1}$ (dashed blue line),
			$L=10^{-2}$ (dash-doted red line),
			$L=10^{-3}$ (dotted green line),
			$L=10^{-4}$ (solid black line)
			and 
			$L=10^{-8}$ (solid magenta line).
		The circles mark the approximate position where each curve bifurcates
		from the limiting profile, see also
		Table~\ref{tab:supercrit_peak_convergence}.
	}
\end{figure}
\begin{table}
	\centering
	\begin{tabular}{|c||c|c|c|c||c|} 
		\hline
		$1/L$ & $10$ & $100$ & $1000$ & $10000$ & $r_c$ \\
		\hline
		\hline
		$x/L~(d=1)$ &
			$3.6$ & $36$ & $360$ & $3600$ & $0.36$ \\
		\hline
		$r/L~(d=2)$ &
			$6$ & $60$ & $600$ & $6000$ & $0.6$\\
		\hline
	\end{tabular}

	\mycaption{\label{tab:supercrit_peak_convergence}%
		Position of circles in Figure~\ref{fig:supercrit_peak_rescaled_far}.
	}
\end{table}

\begin{figure}
\centering
	\subfloat[$d=1,\sigma=6$]{%
		\includegraphics[clip,width=0.35\textwidth,angle=-90]%
			{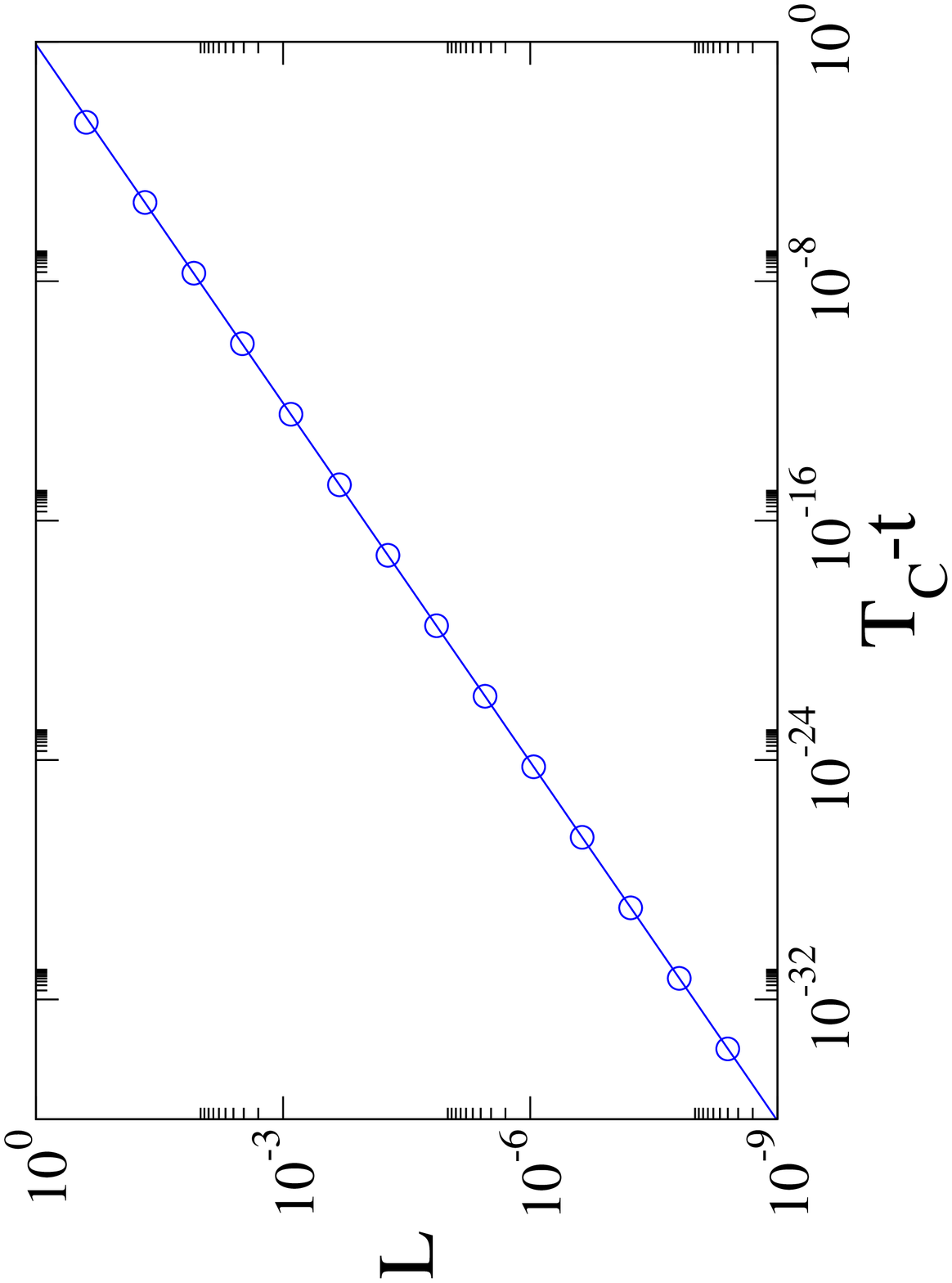}
	}
	\subfloat[$d=2,\sigma=3$]{%
		\includegraphics[clip,width=0.35\textwidth,angle=-90]%
			{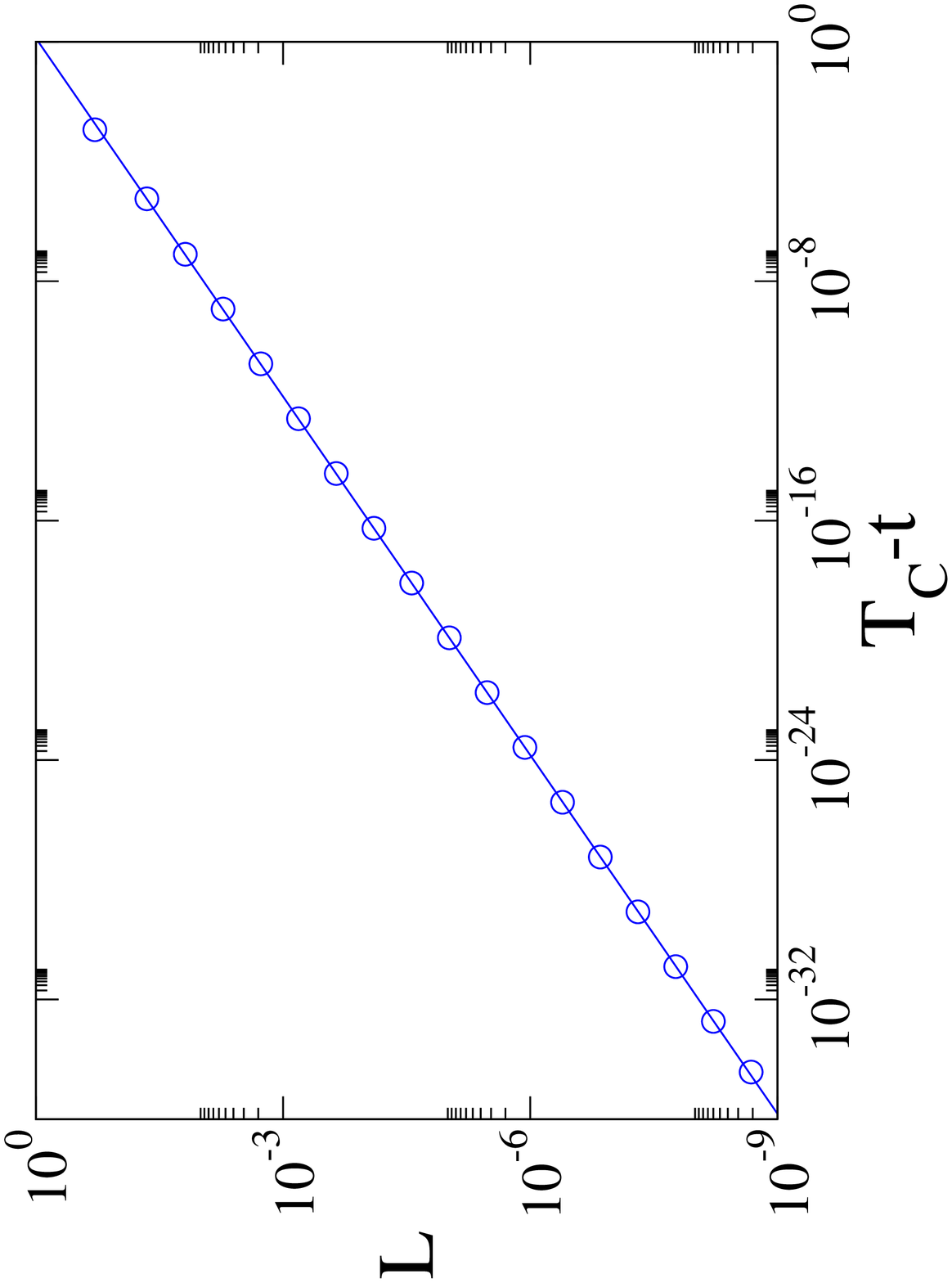}
	}

	\mycaption{\label{fig:supercrit_peak_powerlaw}
		$L(t)$ as a function of~$\left.(\TCrit-t)\right.$, on a logarithmic
		scale, for the solutions of Figure~\ref{fig:supercrit_peak_amp_Ls}
		(circles).
		Solid lines are the fitted
		curves~$\left.L=1.048\cdot(\TCrit-t)^{0.2502}\right.$ (A) 
		and~$\left.L=0.931\cdot(\TCrit-t)^{0.2504}\right.$ (B).
	}
\end{figure}

The radially-symmetric BNLS~\eqref{eq:radial_BNLS} was solved in the
supercritical case~$d=1,\sigma=6$ with the initial 
condition~$\psi_0(x)=1.6e^{-x^2}$, and in the supercritical 
case~$d=2,\sigma=3$ with the initial condition~$\psi_0(r)=3e^{-r^2}$.
In both cases, the solutions blowup at a finite time, see
Figure~\ref{fig:supercrit_peak_amp_Ls}.

To check whether the solutions collapse with the self-similar
profile~\eqref{eqs:supercrit_peak_QSS}, the solution was rescaled according
to~\eqref{eq:psi_rescaled_peak}.
The rescaled solutions at the focusing levels~$L=10^{-4}$ and~$L=10^{-8}$ are
indistinguishable in both the one-dimensional case
(Figure~\ref{fig:supercrit_peak_rescaled_1D}) and the two-dimensional case 
(Figure~\ref{fig:supercrit_peak_rescaled_2D}), providing numerical support that 
the solution collapses with the~$\psi_B$
profile~\eqref{eqs:supercrit_peak_QSS}.
As predicted, the self-similar profile is different than the ground-state~$R$.
Indeed, Figure~\ref{fig:supercrit_peak_rescaled_far} shows that as~$\rho\to\infty$,
the self-similar profile of~$\psi$ decays as~$\rho^{-2/\sigma}$, which is in agreement
with the decay rate of~$B_1(\rho)$.

We next verify that the solution converges to the asymptotic profile
for~$r\in[0,r_c]$, i.e., for~$\rho\in[0,r_c/L(t)]$.
To do this, we plot in Figure~\ref{fig:supercrit_peak_convergence} the rescaled 
solution at focusing levels of~$1/L=10,100,1000,10000$, as a function
of~$\log(r/L)$.
The curves are indistinguishable at~$r/L=\mathcal{O}(1)$, but bifurcate at
increasing values of~$r/L$.
These ``bifurcations positions'' are marked by circles in
Figure~\ref{fig:supercrit_peak_convergence}, and their $r/L$ values are listed in
Table~\ref{tab:supercrit_peak_convergence}.
The ``bifurcation positions'' are linear in~$1/L$, indicating that the region
where~$\psi\sim\psi_B$ is indeed~$\rho\in[0,r_c/L(t)]$, which corresponds
to~$r\in[0,r_c]$.

To compute the blowup rate~$p$,  we performed a least-squares fit of~$\log(L)$
with $\log(T_c-t)$, see Figure~\ref{fig:supercrit_peak_powerlaw}.
The resulting values are~$p\approx 0.2502$ in the~$d=1,\sigma=6$ case 
and~$p\approx 0.2504$ in the~$d=2,\sigma=3$ case.
%
Next, we provide two indications that the blowup rate is exactly~$1/4$, i.e.,
that
\[
	L(t)\sim \kappa\sqrt[4]{\TCrit-t},\qquad \kappa>0.
\]
First, if the blowup rate is exactly a quartic root, then~$
	L^3L_t\to -\frac{\kappa^4}{4}<0.
$
Indeed, Figure~\ref{fig:supercrit_peak_L3Lt} shows that 
in the case~$d=1$,~$\sigma=6$,~$L^3L_t\to -0.289$, implying that 
\begin{equation}	\label{eq:kappa_d1s6}
	\kappa(d=1,\sigma=6) \approx \sqrt[4]{4\cdot0.289} \approx 1.037\,.
\end{equation}
In the case~$d=2$,~$\sigma=3$,~$L^3L_t\to -0.171$, implying that 
\begin{equation}	\label{eq:kappa_d2s3}
	\kappa(d=2,\sigma=3) \approx \sqrt[4]{4\cdot0.171} \approx 0.909\,.
\end{equation}
Since~$L^3L_t$ converges to a finite, negative constant, this shows that the
blowup rate is exactly~$1/4$.

Second, according to Lemma~\ref{lem:supercrit_peak_rate_profile},
if~$L^3L_t\to 0$ the self-similar profile~$B(\rho)$ should not satisfy the
standing-wave equation~\eqref{eq:stationary_state}.
In Figure~\ref{fig:supercrit_peak_rescaled} we saw that the rescaled self-similar
BNLS solutions and the corresponding ground-states differ considerably
(compare with Figure~\ref{fig:crit_peak_rescaled}), indicating that the blowup
rate is exactly~$1/4$.
Therefore, the numerical results again support
Conjecture~\ref{conj:supercrit_peak_rate_profile}.

Finally, we verified that the value of~$\kappa$ in the blowup rate~\eqref{eq:rate_14}
is universal.
We solve the BNLS in the case $d=1,\sigma=6$ with the initial
condition~$\psi_0(x)=2e^{-x^4}$.
In this case, 
the calculated value of~$\kappa(d=1,\sigma=6)$ is~$
	\kappa=\displaystyle\lim_{t\to\TCrit}\sqrt[4]{-4L_tL^3}\approx1.037
$, which is equal, to first~$3$ significant digits, to the previously obtained
value, see~\eqref{eq:kappa_d1s6}, for the initial condition~$\psi_0(x)=1.6e^{-x^2}$.
Similarly, in the case $d=2,\sigma=3$, we solve the equation with the initial
condition~$\psi_0(x)=3e^{-x^4}$. 
The calculated value of~$\kappa(d=2,\sigma=3)$ is~$
	\kappa=\displaystyle\lim_{t\to\TCrit}\sqrt[4]{-4L_tL^3}\approx0.913
$, which is equal, to first~$2$ significant digits, to the previously obtained
value, see~\eqref{eq:kappa_d2s3}, for the initial condition~$\psi_0(x)=3e^{-x^2}$.

\section{\label{sec:num_meth}Numerical methods}

\subsection{\label{ssec:SGR}Adaptive mesh construction using the SGR method}

In this study, we computed singular solutions of the BNLS 
equation~\eqref{eq:radial_BNLS}.
These solutions become highly-localized, so that the spatial
scale-difference between the singular region~$r-\rmax = {\cal O}(L)$
and the exterior regions can be as large as~${\cal O}(1/L)\sim
10^{10}$. In order to resolve the solution at both the singular and
non-singular regions, we use an adaptive grid.

We generate the adaptive grids using the {\em Static Grid Redistribution}~(SGR)
method, which was first introduced by Ren and Wang~\cite{Ren-00}, and later
simplified and improved by Gavish and Ditkowsky~\cite{SGR-08}.
Using this approach, the solution is allowed to propagate (self-focus) until it
becomes under-resolved.  
At this stage, a new grid, with the same number of grid-points, is generated
using De'Boors `equidistribution principle', wherein the grid points 
$\{r_m\}$ are spaced such that a certain weight function~$w_1[\psi]$ is
equidistributed, i.e., that \[
	\int_{r=r_m}^{r_{m+1}} w_1\left[ \psi(r) \right]dr
	= \text{const},
\]
see~\cite{Ren-00,SGR-08} for details.

\begin{figure}
	\centering
	\subfloat[old method,~$L=10^{-6}$]{\label{fig:w3_effect_old}%
		\includegraphics[clip,width=0.45\textwidth]%
			{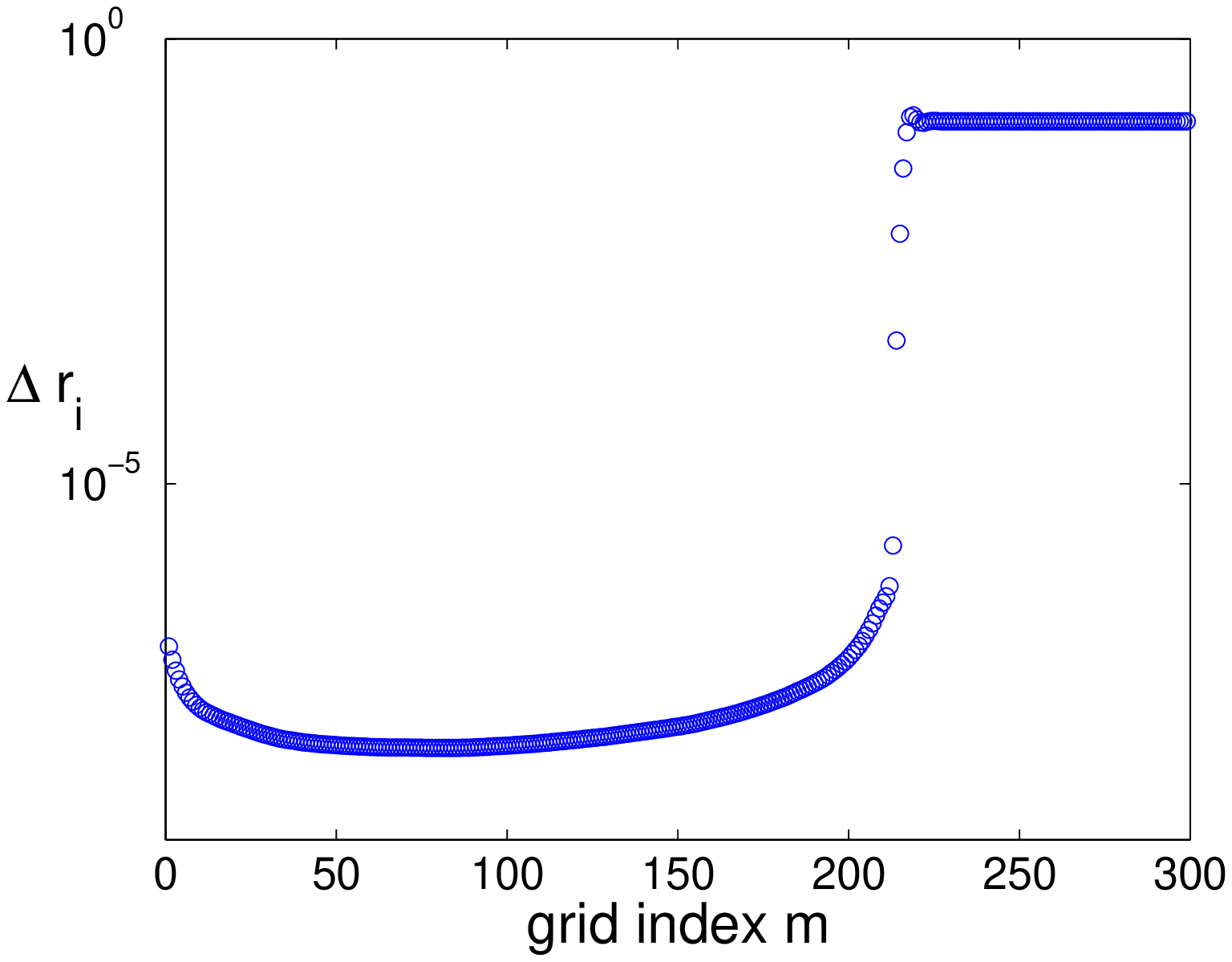}%
	}
	\subfloat[new method,~$L=10^{-12}$]{\label{fig:w3_effect_new}%
		\includegraphics[clip,width=0.45\textwidth]%
			{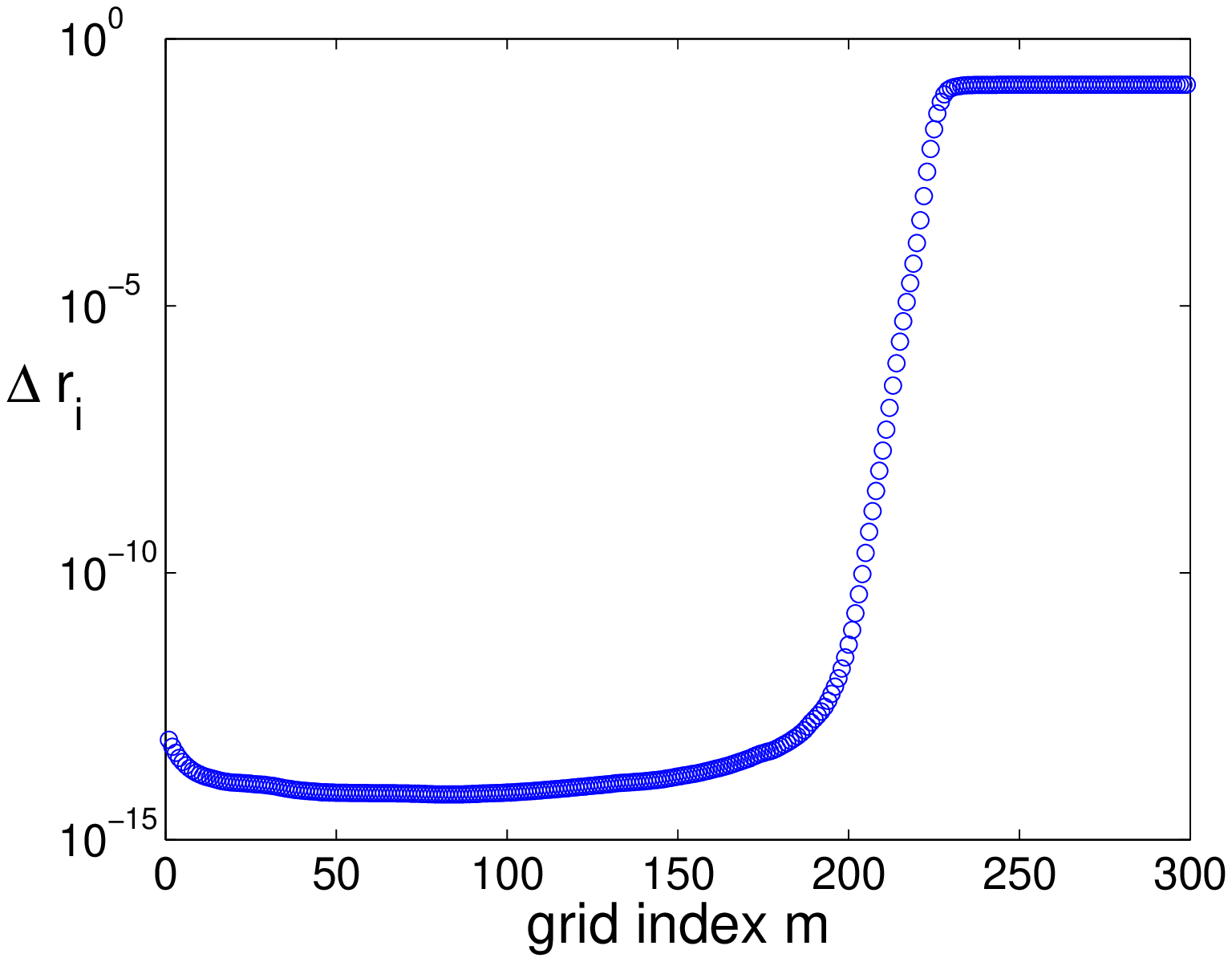}%
	}

	\mycaption{\label{fig:w3_effect}%
		The grid-spacing~$\Delta r_m$ obtained using the SGR method
		of~\cite{SGR-08} for a peak-type singular solution of the BNLS.
		A) The grid generated the original method of~\cite{SGR-08} at focusing
		level of~$L=10^{-6}$. 
		The Singular and non-singular regions are well-resolved, but the
		transition region~$\Delta r_m$ displays a discontinuity.
		At this point, the finite difference operator becomes ill-conditioned.
		B) same as (A), after adding the new penalty function~$w_3$, at focusing
		level~$L=10^{-12}$.
		Even at this much larger focusing level, the transition region is now
		well resolved.
	}
\end{figure}

The algorithms of~\cite{Ren-00,SGR-08} keeps a recursive set of grids, the
coordinates of each is given in the reference frame of the previous grid.
This enables the simulation to reach high focusing levels, where the grid-points
position cannot be stored at the physical reference frame due to loss of
significant digits.
In this study, however, we implement a simplified version of the method
of~\cite{SGR-08}, in which we dispense with the above hierarchy of grids and
store the grid-points position in terms of the original reference frame.
On this highly non-uniform grid, we use a standard non-uniform finite-difference
approximation of the radial biharmonic operator~\eqref{eq:radial_bi_Laplacian}.
The approximation is third-order accurate, with a seven-point stencil.
Thus, we are limited by the standard machine accuracy\footnote{
	At the time of writing, the standard machine accuracy is IEEE~$64$-bit
	floating points, with relative machine error of~$10^{-16}$.
	Using~$128$-bit floating points will, for all practical purposes, eliminate the
	above restriction.
} to focusing levels of no more than~$L\approx 10^{-12}$.
However, the resulting gain in software (and numerical) simplicity justifies
this limitation.

The method in~\cite{SGR-08} allows control of the fraction of grid points
that migrate into the singular region, preventing under-resolution at the
exterior regions.
This is done by using a weight-function~$w_2$, which penalizes large inter-grid
distances.
However, we found that this numerical mechanism, while necessary, is
insufficient for our purposes.
In order to understand the reason, let us consider the grid-point spacings~$
	\Delta r_m = r_{m+1}-r_m
$.
Using the method of~\cite{SGR-08} with both~$w_1$ and~$w_2$ causes a very sharp 
bi-partition of the grid points -- to those inside the singular region, whose
spacing is determined by~$w_1$ and is~$\Delta r_m=\mathcal{O}(L)$,
and to those outside the singular region, whose spacing is determined by~$w_2$ 
and is~$\Delta r_m=\mathcal{O}(1)$, see Figure~\ref{fig:w3_effect_old}.
Inside each of these regions, the finite difference approximation we use is well
conditioned.
However, at the transition between these two regions, the finite-difference stencil,
seven-points in width, spans grid-spacings with~$\mathcal{O}(1/L)$
scale-difference --- leading to under-resolution which completely violates the
validity of the finite-difference approximation.

In order to overcome this limitation, we improve the algorithm of~\cite{SGR-08}
by adding a third weight function \[
w_3 (r_m) = \sqrt{	1+ \frac{\abs{\Delta^2r_m}}{\Delta r_m} },
\]
which penalizes the second-difference~$\Delta^2 r_m = \Delta r_{m+1}-\Delta r_m$
operator of the grid locations, allowing for a smooth transition between the
singular region and the non-singular region, see Fig~\ref{fig:w3_effect_new}.

On the sequence of grids, the equations are solved using a
Predictor-Corrector Crank-Nicholson scheme, which is second-order in time.

\subsection{\label{ssec:SRM}The Spectral Renormalization Method}

Here, we describe the adaptation of the Spectral Renormalization method (SRM)
to the standing-wave BNLS equation~\eqref{eq:StandBNLS}.
Denoting the Fourier transform of~$R(\bvec{x})$ by~$\mathcal F[R](\bvec{k})$,
equation~\eqref{eq:StandBNLS} transforms to 
\begin{equation}	\label{eq:SRM_FT}
	\mathcal F[R](\bvec{k})=\frac{1}{k^4+1}
		\mathcal F\left[|R|^{2\sigma}R\right],
\end{equation}
where~$k^4=|\bvec{k}|^4$, leading to the fixed-point iterative scheme \[
	\mathcal F[R_{m+1}]=
		\frac{1}{k^4+1} \mathcal F\left[|R_m|^{2\sigma}R_m\right],
		\qquad m=0,1,\dots~.
\]
Typically this iterative scheme diverges either to~$\infty$ or to~$0$.
In order to avoid this problem, we renormalize the solution as follows.
Multiplying equation~\eqref{eq:SRM_FT} by~$\mathcal F[R]^*$ and integrating
over~$\bvec{k}$ gives the integral relation:
\begin{subequations}	\label{eqs:SRM}
	\begin{equation} 	\label{eq:SRM_integral_relation}
		SL=SR,\quad\text{where}\quad
		SL[R] \equiv \int\abs{\mathcal F[R]}^2d\bvec{k},	\quad 
		SR[R] \equiv \int \frac{1}{k^4+1} 
				\mathcal F\left[|R|^{2\sigma}R\right]\mathcal F[R]^*d\bvec{k}. 
	\end{equation}
	We now define~$R_{m+\frac{1}{2}}=C_m R_m$ such that the integral 
	relation~\eqref{eq:SRM_integral_relation} 	is satisfied by 
	$R_{m+\frac{1}{2}}$, i.e, that \[
		SL\left[R_{m+1/2}\right] = C_m^2SL[R_m]
		= C_m^{2\sigma+2}SR[R_m]=SR\left[R_{m+1/2}\right],
	\]
	leading to~$
		C_m = \left(\frac{SL[R_m]}{SR[R_m]}\right)^{\frac{1}{2\sigma}},
	$ and hence to~$$
		\abs{R_{m+1/2}}^{2\sigma}R_{m+1/2} =
		\left(\frac{SL[R_m]}{SR[R_m]}\right)^{1+\frac{1}{2\sigma}}
		\abs{R_m}^{2\sigma}R_m.
	$$
	The Spectral Renormalization method is therefore given by the iterations
	\begin{equation}
		\mathcal F(R_{m+1})=
			\left(\frac{SL[R_m]}{SR[R_m]}\right)^{1+\frac{1}{2\sigma}}
				\frac{1}{k^4+1} \mathcal F\left(|R_m|^{2\sigma}R_m\right),
		\qquad m=1,2\dots~.
	\end{equation}
\end{subequations}

In this work, we use the SRM to solve~\eqref{eq:StandBNLS} for the
cases~$d=1,2,3$ without imposing radial symmetry.
Alternatively, one might have solved the radial
equation~\eqref{eqs:radial_stationary_state} using a modified Hankel-like
transform instead of the Fourier Transform.
Our main reason for not doing so is the convenience and cost-effectiveness of
using the Fast Fourier Transform.
We also note that our non-radially-symmetric method produced a
radially-symmetric solution, which suggests that the ground state is radially
symmetric.


\subsection*{Acknowledgments} 
We thank Nir Gavish for useful discussions.
This research was partially supported by grant \#123/2008 from the Israel
Science Foundation (ISF).

\appendix

\section{\label{app:compactness}Compactness Lemma}
Here we provide an extension of the Compactness Lemma for~$H_{\rm radial}^{1}$
functions~\cite{Strauss-77}, to the case of~$H^2_{\rm radial}$:

\begin{lem}{(Compactness Lemma)}\label{lem:compactness}
Let~$d\geq 2$ and let~$\sigma>0$ be in the $H^2$-subcritical
regime~\eqref{eq:admissible-range}.
Then, the embedding~$
	\left.
		H_{\rm radial}^2(\mathbb{R}^d)
			\to L^{2(\sigma+1)}(\mathbb{R}^d)
	\right.
$ is compact, i.e., every bounded sequence~$
	u_{n'}\in H_{\rm radial}^2(\mathbb{R}^d)
$ has a subsequence~$u_n$ which converges strongly in~$
	L^{2(\sigma+1)}(\mathbb{R}^d)
$.

\begin{proof}
If~$\norm{u_{n'}}_{H^2}\leq M$, then the sequence~$u_{n'}$ has a subsequence
$u_n$ which converges weakly to~$u$ in~$H^2$. 
Since the limit of radial functions is a radial function,~$u\in H_{\rm radial}^2$. 
In addition, since for any bounded domain~$\Omega$, the embedding 
$H^2(\Omega)\to L^2(\Omega)$ is compact, there is a subsequence which 
converges strongly to~$u$ in~$L^2(\Omega)$, i.e.,~$
	{ \lim_{n\to\infty}} \int_{\Omega}
		\left|u_n-u\right|^2d\bvec{x}=0
$.
From the Gagliardo-Nirenberg inequality on the bounded domain~$\Omega$, 
see~\cite{Gagliardo-58,Gagliardo-59,Nirenberg-59}, \[
	\norm{f}_{L^{2(\sigma+1)}(\Omega)}^{2(\sigma+1)}  
	\leq B_{\sigma,d,\Omega} 
		\norm{\Delta f}_{L^2(\Omega)}^{\sigma d/2} \cdot 
		\norm{f}_{L^2(\Omega)}^{2(\sigma+1)-\sigma d/2}
	= B_{\sigma,d,\Omega}
		\norm{\Delta f}_{L^2(\Omega)}^{\sigma d/2} \cdot 
		\norm{f}_{L^2(\Omega)}^{2(1-\sigma\frac{d-4}4)}
\]
and since~$1>\sigma\frac{d-4}4$ in the~$H^2$-subcritical case, it follows that
$u_n\to u$ strongly in~$L^{2(\sigma+1)}(\Omega)$, so that \[
	\lim_{n\to\infty} \int_{\Omega}
		\left|u_n-u\right|^{2(\sigma+1)} d\bvec{x} = 0.
\]

Next, Strauss radial Lemma~\cite{Strauss-77} for~$H^{1}$ functions gives that
$\forall\rho_{\epsilon}>1$ and~$n$, \[
	\int_{\left|x\right|>\rho_{\epsilon}}
		\left|u_n\right|^{2(\sigma+1)}d\bvec{x}
	\leq  
		\frac{C}{\rho_{\epsilon}^{(d-1)\sigma}},
\] so that~$\forall\epsilon\exists\rho_{\epsilon}$ s.t.~$\forall n$ \[
	\int_{|x|>\rho_{\epsilon}} 
		\left|u_n\right|^{2(\sigma+1)}d\bvec{x} \leq \epsilon.
\]

Finally, since \[
	\norm{u_n-u}_{L^{2(\sigma+1)}(\mathbb{R}^d)}  
	\leq  \norm{u_n-u}_{
		L^{2(\sigma+1)}(|\bvec{x}|<\rho_{\epsilon})
	} 
	+ \norm{u_n}_{
		L^{2(\sigma+1)}(|\bvec{x}|>\rho_{\epsilon})
	} 
	+ \norm{u}_{
		L^{2(\sigma+1)}(|\bvec{x}|>\rho_{\epsilon})
	}
\]
the convergence in~$\mathbb{R}^d$ is obtained. 

\end{proof}\end{lem}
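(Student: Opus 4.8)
The plan is to follow the classical Strauss argument for radial compactness, adapted so that the second-order control available in~$H^2$ pushes the admissible exponent up to the full $H^2$-subcritical range~\eqref{eq:admissible-range}. The three ingredients are weak compactness, local strong compactness on balls, and a uniform-in-$n$ decay estimate at infinity that exploits radial symmetry. First I would extract a subsequence: since the sequence is bounded in the reflexive space~$H^2_{\rm radial}(\Real^d)$, there is a subsequence with~$u_n\weakto u$ weakly in~$H^2$, and because the radial functions form a closed subspace (hence weakly closed), the limit satisfies~$u\in H^2_{\rm radial}$. The goal is then to show~$u_n\to u$ strongly in~$L^{2(\sigma+1)}(\Real^d)$, and the natural route is to split~$\Real^d$ into a large ball~$\{|\bvec x|<\rho\}$ and its complement.

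On the ball I would invoke the Rellich--Kondrachov theorem: for any bounded~$\Omega$ the embedding~$H^2(\Omega)\hookrightarrow L^2(\Omega)$ is compact, so (after passing to a further subsequence) $u_n\to u$ strongly in~$L^2(\Omega)$. To upgrade this to~$L^{2(\sigma+1)}(\Omega)$, I would apply the Gagliardo--Nirenberg inequality on~$\Omega$ to~$f=u_n-u$, interpolating the $L^{2(\sigma+1)}$ norm between~$\norm{\Delta f}_{L^2(\Omega)}$ (which stays bounded) and~$\norm{f}_{L^2(\Omega)}$ (which tends to zero). Here the $H^2$-subcriticality condition~$\sigma(d-4)/4<1$ is exactly what guarantees that the power of~$\norm{f}_{L^2(\Omega)}$ appearing in the interpolation is strictly positive, so that local strong $L^2$-convergence forces local strong $L^{2(\sigma+1)}$-convergence.

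The main obstacle --- and the only place radial symmetry is essential --- is controlling the tail~$\{|\bvec x|>\rho\}$ uniformly in~$n$, since this is precisely what fails for the non-radial embedding. Because~$H^2\subset H^1$, the sequence is bounded in~$H^1_{\rm radial}$, so Strauss' radial lemma gives the pointwise decay~$|u_n(\bvec x)|\le C|\bvec x|^{-(d-1)/2}\norm{u_n}_{H^1}$. Writing~$|u_n|^{2(\sigma+1)}=|u_n|^{2\sigma}|u_n|^2$, bounding the first factor by this decay while keeping the second factor inside the $L^2$ norm, I obtain~$\int_{|\bvec x|>\rho}|u_n|^{2(\sigma+1)}\,d\bvec x\le C\rho^{-(d-1)\sigma}$ uniformly in~$n$, and the same bound for~$u$. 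Since~$(d-1)\sigma>0$ for~$d\ge2$, this tail is genuinely small for large~$\rho$.

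To conclude, the delicate bookkeeping is to fix the radius~$\rho_\epsilon$ first, using the ($n$-independent) tail bound so that both~$\norm{u_n}_{L^{2(\sigma+1)}(|\bvec x|>\rho_\epsilon)}$ and~$\norm{u}_{L^{2(\sigma+1)}(|\bvec x|>\rho_\epsilon)}$ are below~$\epsilon$, and only afterwards invoke the ball compactness for that fixed radius. A triangle-inequality split of~$\norm{u_n-u}_{L^{2(\sigma+1)}(\Real^d)}$ into the contribution on~$\{|\bvec x|<\rho_\epsilon\}$ (vanishing by the Rellich/Gagliardo--Nirenberg step) and the two tail contributions (each~$\le\epsilon$) then yields~$\limsup_n\norm{u_n-u}_{L^{2(\sigma+1)}}\le 2\epsilon$, and letting~$\epsilon\to0$ completes the proof. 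I expect the only real subtlety to be this ordering of quantifiers, which keeps the two estimates from becoming circular.
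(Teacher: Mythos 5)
Your proposal is correct and follows essentially the same route as the paper's proof: weak compactness in~$H^2_{\rm radial}$, Rellich--Kondrachov plus the Gagliardo--Nirenberg interpolation (with the $H^2$-subcriticality condition~$\sigma(d-4)/4<1$ guaranteeing a positive power of the locally convergent~$L^2$ norm) on a large ball, the Strauss radial decay for a uniform-in-$n$ tail bound, and a triangle-inequality split. Your explicit attention to fixing~$\rho_\epsilon$ before invoking the ball compactness, and your derivation of the tail estimate from the pointwise decay via~$|u_n|^{2(\sigma+1)}=|u_n|^{2\sigma}|u_n|^2$, are both consistent with (and slightly more carefully spelled out than) the paper's argument.
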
 

\section{\label{app:proof-power-conc-lower-bound}%
	Proof of Corollary~\ref{cor:power-conc-lower-bound} }

\begin{proof}
	From the proof of Theorem~\ref{thrm:self-similarity}, \[
		\norm{\psi(t_k,r)}_{L^2(r<a(t_k))}^2  
		= \norm{\phi_k(r)}_{L^2(r<a(t_k)/\LN(t_k))}^2,
	\]
	and since~$\lim_{k\to\infty}a(t_k)/\LN(t_k)=\infty$, we have that
	$\forall M>0$
	\[
		\liminf_{k\to\infty}\norm{\phi_k(r)}_{L^2(r<M)}^2
		\leq 
		\liminf_{k\to\infty}\norm{\phi_k(r)}_{L^2(r<a(t_k)/\LN(t_k))}^2.
	\]
	Since~$\phi_k(r)\underset{L^2}{\weakto}\Psi$, it follows that
	$\phi_k(r)\underset{L^2(M)}{\weakto}\Psi$, and so \[
		\norm{\Psi}_{L^2(r<M)}^2  
		\leq	\liminf_{k\to\infty}\norm{\phi_k(r)}_{L^2(r<M)}^2
	\]
	This is true~$\forall M$, and so \[
		\BPCrit \leq\norm{\Psi}_{L^2}^2  \leq  
		\liminf_{k\to\infty}\norm{\psi(t_k,r)}_{L^2(r<a(t_k))}^2.
	\]

	For the second result, since~$\norm{\Psi}_{L^2}\geq\BPCrit$, it follows that
	for all~$\epsilon>0$ there exist~$K>0$ such that \[
		\norm{\Psi}_{L^2(r>K)}\geq(1-\epsilon)\BPCrit.
	\]
	Therefore, since \[
		\norm{\psi(t_k,r)}_{L^2(r<K\cdot \LN(t_k))}^2  
		= \norm{\phi_k(r)}_{L^2(r<K)}^2,
	\]
	a similar argument as in the previous section gives \[
		(1-\epsilon)\BPCrit \leq\norm{\Psi}_{L^2(r<K)}^2  \leq  
		\liminf_{k\to\infty}\norm{\psi(t_k,r)}_{L^2(r<K\cdot \LN(t_k))}^2.
	\]
\end{proof}

\section{\label{app:gagli-niren}Gagliardo Nirenberg inequality for~$H^2$ functions}

In $L^2$-critical case~$\sigma d=4$, which is always in the~$H^2$-subcritical
regime~\eqref{eq:admissible-range}, the appropriate Gagliardo-Nirenberg inequality
in~$H^2$ is~\cite{Gagliardo-58,Gagliardo-59,Nirenberg-59}:

\begin{lem}{(Gagliardo-Nirenberg inequality)}
Let~$\sigma d=4$, and let~$f\in H^2(\Real^d)$, then 
\begin{equation}	\label{eq:Critical_Gagliardo}
	\norm{f}_{2(\sigma+1)}^{2(\sigma+1)}  
	\leq  B_{\sigma,d}
		\norm{\Delta f}_2^2\norm{f}_2^{2\sigma}.
\end{equation}
\end{lem}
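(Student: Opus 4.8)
The plan is to obtain this as the critical specialization of the classical Gagliardo-Nirenberg interpolation inequality~\cite{Gagliardo-58,Gagliardo-59,Nirenberg-59}, letting the constraint~$\sigma d=4$ fix all the exponents. First I would recall the standard estimate: for~$f\in H^2(\Real^d)$ and an admissible exponent~$p$,
\[
	\norm{f}_p \leq C\,\norm{\Delta f}_2^{\theta}\,\norm{f}_2^{1-\theta},
\]
where~$\theta$ is forced by scaling. Here I use that the Laplacian seminorm may replace the full Hessian seminorm appearing in the usual formulation: by Plancherel,
\[
	\sum_{i,j}\norm{\partial_i\partial_j f}_2^2
		= \int |\bvec{k}|^4\,\abs{\mathcal F[f]}^2\,d\bvec{k}
		= \norm{\Delta f}_2^2,
\]
so no information is lost in stating the inequality with~$\norm{\Delta f}_2$.

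Next I would pin down~$\theta$ by inserting~$p=2(\sigma+1)=2+8/d$ into the scaling identity~$\frac1p=\theta\bigl(\frac12-\frac2d\bigr)+(1-\theta)\frac12$, which gives~$\theta=\frac{d}{d+4}$. The decisive computation is then to raise the interpolation inequality to the power~$p=2(\sigma+1)=\frac{2(d+4)}{d}$, producing
\[
	\theta p = \frac{d}{d+4}\cdot\frac{2(d+4)}{d} = 2,
	\qquad
	(1-\theta)p = \frac{4}{d+4}\cdot\frac{2(d+4)}{d} = \frac8d = 2\sigma,
\]
where the last equality uses~$\sigma=4/d$. This yields exactly~$\norm{f}_{2(\sigma+1)}^{2(\sigma+1)}\leq B_{\sigma,d}\,\norm{\Delta f}_2^2\,\norm{f}_2^{2\sigma}$ with~$B_{\sigma,d}=C^p$. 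The same exponents~$(2,2\sigma)$ already appear in the bounded-domain inequality used in Lemma~\ref{lem:compactness}, since there~$\sigma d/2=2$ and~$2(\sigma+1)-\sigma d/2=2\sigma$ in the critical case; the present statement is just its whole-space counterpart.

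Before applying the interpolation I would check admissibility of~$p$, i.e. that the~$L^2$-critical exponent~$\sigma=4/d$ is~$H^2$-subcritical. For~$d\leq4$ this is automatic; for~$d>4$ one has~$4/d<4/(d-4)$ since~$d>d-4$, equivalently~$2(\sigma+1)=2+8/d$ lies strictly below the~$H^2$ Sobolev exponent~$2d/(d-4)$, so the embedding~$H^2\hookrightarrow L^{2(\sigma+1)}$ is valid and the hypotheses of the classical theorem are met.

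I expect no genuine analytic obstacle: the inequality is asserted only with \emph{some} finite constant~$B_{\sigma,d}$, so the entire argument reduces to invoking the classical inequality and carrying out the exponent bookkeeping. The only points requiring care are the clean cancellation that makes the exponents collapse to~$(2,2\sigma)$ precisely when~$\sigma d=4$ (the source of the scale invariance of the two sides), and the Plancherel identification of the Laplacian and Hessian seminorms. The sharper question of identifying the optimal constant with the ground state~$\norm{R}_2$ is a separate variational matter, which this lemma feeds into via Corollary~\ref{cor:P2H_bound}.
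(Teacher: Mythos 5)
Your derivation is correct, and it is essentially the paper's own approach: the paper gives no proof of this lemma at all, simply citing the classical Gagliardo--Nirenberg references, and your argument is exactly the intended specialization (the interpolation exponent $\theta=d/(d+4)$, the collapse of $\theta p$ and $(1-\theta)p$ to $2$ and $2\sigma$ under $\sigma d=4$, the Plancherel identification of the Laplacian and Hessian seminorms, and the subcriticality check $2(\sigma+1)<2d/(d-4)$ for $d>4$ are all correct). Nothing further is needed.
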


We note that the ground-state~$R$ of equation~\eqref{eq:stationary_state} is the 
minimizer of the Gagliardo-Nirenberg
inequality~\cite{Fibich_Ilan_George_BNLS:2002}, and that its $L^2$ norm, the
critical power, satisfies \[
	\BPCrit = \norm{R}_2^2 = \left( \frac{\sigma+1}{B_{\sigma,d}}
	\right)^{1/\sigma}.
\]
Hence, the Gagliardo-Nirenberg inequality implies the following Corollary:
\begin{cor}\label{cor:P2H_bound} Let~$f\in H^2$ and~$\sigma d=4$, then \[
		H[f]  \geq  \left[
		1-\left( \frac{\norm{f}_2^2}{\BPCrit} \right)^{\sigma}
		\right] 
		\cdot \norm{\Delta f}_2^2,
	\]
	 so that 
	\begin{equation}	\label{eq:P2H_bound}
		\norm{f}_2^2\leq\BPCrit  \implies  H[f]\geq0.
	\end{equation}
\end{cor}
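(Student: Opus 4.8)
The plan is to feed the Hamiltonian directly into the critical Gagliardo--Nirenberg inequality of the preceding Lemma and then trade the sharp constant $B_{\sigma,d}$ for $\BPCrit$ using the identity $\BPCrit=(\,(\sigma+1)/B_{\sigma,d}\,)^{1/\sigma}$ recorded just above the Corollary. First I would write out the definition of the Hamiltonian,
\[
	H[f] = \norm{\Delta f}_2^2 - \frac{1}{\sigma+1}\norm{f}_{2(\sigma+1)}^{2(\sigma+1)},
\]
and bound the nonlinear term from above by~\eqref{eq:Critical_Gagliardo},
\[
	\frac{1}{\sigma+1}\norm{f}_{2(\sigma+1)}^{2(\sigma+1)}
		\leq \frac{B_{\sigma,d}}{\sigma+1}\,\norm{\Delta f}_2^2\,\norm{f}_2^{2\sigma}.
\]
Substituting this into $H[f]$ and factoring out $\norm{\Delta f}_2^2$ gives
\[
	H[f] \geq \norm{\Delta f}_2^2\left[\,1 - \frac{B_{\sigma,d}}{\sigma+1}\,\norm{f}_2^{2\sigma}\,\right].
\]

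The key algebraic step is to recognise that, by $\BPCrit^{\sigma}=(\sigma+1)/B_{\sigma,d}$, one has $B_{\sigma,d}/(\sigma+1)=\BPCrit^{-\sigma}$, so that
\[
	\frac{B_{\sigma,d}}{\sigma+1}\,\norm{f}_2^{2\sigma}
		= \left(\frac{\norm{f}_2^2}{\BPCrit}\right)^{\!\sigma}.
\]
This turns the bracket into exactly the claimed coefficient and yields the first displayed inequality of the Corollary. The implication~\eqref{eq:P2H_bound} then follows immediately: when $\norm{f}_2^2\leq\BPCrit$ the ratio $\norm{f}_2^2/\BPCrit\leq1$, hence its $\sigma$-th power is at most $1$ and the bracket is non-negative; since $\norm{\Delta f}_2^2\geq0$, we conclude $H[f]\geq0$.

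There is no genuine obstacle here, since the entire analytic content---the existence of a sharp constant and the fact that $R$ is its extremiser, so that $\BPCrit=\norm{R}_2^2$---is already packaged in the Gagliardo--Nirenberg Lemma and in the formula for $\BPCrit$. The only point requiring care is the bookkeeping of the exponent $2\sigma$ together with the critical relation $\sigma d=4$, which is precisely what makes~\eqref{eq:Critical_Gagliardo} homogeneous with $\norm{\Delta f}_2^2$ appearing to the first power; after that, the argument is a one-line factorisation.
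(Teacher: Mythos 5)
Your proof is correct and is exactly the argument the paper intends: the Corollary is stated as an immediate consequence of the critical Gagliardo--Nirenberg inequality~\eqref{eq:Critical_Gagliardo} combined with the identity $\BPCrit^{\sigma}=(\sigma+1)/B_{\sigma,d}$, which is precisely the substitution and factorisation you carry out.
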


\section{\label{app:WKB}WKB analysis of eq.~\eqref{eq:supercrit_peak_ODE}.}

As $\rho\to\infty$, the nonlinear term in~\eqref{eq:supercrit_peak_ODE} becomes
negligible, and~\eqref{eq:supercrit_peak_ODE} reduces to
\begin{equation}	\label{eq:SC_WKB_profile}
	-B(\rho) 
	-\Delta_\rho B 
	+ i b^3 \left(
		\frac{2}{\sigma}B + \rho B_\rho 
	\right)
	 = 0,
	 \qquad b^3 = \frac{\kappa^4}{4},
\end{equation} where
\[
	\Delta_{\rho}^2 = 
	-\frac{(d-1)(d-3)}{\rho^3}\partial_{\rho}
		+\frac{(d-1)(d-3)}{\rho^2}\partial_{\rho}^2
		+\frac{2(d-1)}{\rho}\partial_{\rho}^3
		+\partial_{\rho}^4 ~ .
\]

In order to apply the WKB method, we substitute~$B(\rho) = \exp(w(\rho))$, and expand
\[
	w(\rho) \sim w_0(\rho) + w_1(\rho) + \dots\; .
\]
Substituting~$w_0(\rho) = \alpha \rho^p$ and balancing terms shows that~$p=4/3$,
and that the equation for the leading-order, 
the~$\mathcal{O}\left( \rho^{4/3} \right)$ terms, is 
\[
	\left( w_0^\prime \right)^3 
		= \left( \frac{4\alpha}{3} \right)^3 \rho
		= ib^3 \rho.
\] 
Therefore,
\[
	\alpha = \frac{3}{4}b e^{i\frac \pi 6 + i\pi \frac{2k}{3}}
	=
	\frac 3 4 b \cdot
	\left\{
		-i ,
		\frac{\sqrt{3}+i}2 ,
		\frac{-\sqrt{3}+i}2 
	\right\} .
\]
The equation for the next order, the~$\mathcal{O}\left( 1 \right)$ terms, is \[
	1+2dib^3 = ib^3 \left( \frac 2\sigma + 3\rho w_1^\prime \right), 
\] implying that \[
w_1 = \frac 13 \left( \frac 2\sigma(1-\sigma d)
	- \frac{1}{ib^3} \right) \log \rho .
\]
The next-order terms are~$\mathcal{O}\left( \rho^{-4/3} \right)=o(1)$ and can be
neglected.
\begin{subequations} \label{eqs:WKB_solutions}
We therefore obtain the three solutions 
	\begin{eqnarray} 
		B_2(\rho) &\sim&
			\frac{1}{\rho^{\frac{2}{3\sigma}(\sigma d-1)}}
			\exp\left( 
				-i\frac{3}{4}b\rho^{4/3}
				-i\frac{1}{3b^3} \log(\rho)
			\right), \\
		B_3(\rho) &\sim&
			\frac{
				\exp\left( 
				\frac{3\sqrt{3}}{8}b \rho^{4/3}
				\right)
			}{\rho^{\frac{2}{3\sigma}(\sigma d-1)}}
			\exp\left( 
				+i\frac{3}{8}b\rho^{4/3}
				-i\frac{1}{3b^3} \log(\rho)
			\right), \\
		B_4(\rho) &\sim&
			\frac{
				\exp\left( - ~
					\frac{3\sqrt{3}}{8}b \rho^{4/3}
				\right)
			}{\rho^{\frac{2}{3\sigma}(\sigma d-1)}}
			\exp\left( 
				+i\frac{3}{8}b\rho^{4/3}
				-i\frac{1}{3b^3} \log(\rho)
			\right) .
	\end{eqnarray}
	Since~\eqref{eq:SC_WKB_profile} is a fourth order ODE, another solution is
	required.
	To obtain the fourth solution, we substitute~$w_0\sim \beta\log(\rho)$  
	in~\eqref{eq:SC_WKB_profile} and obtain that the equation for the
	leading-order, the~$\mathcal{O}(1)$ terms, is \[
		-1+ib^3 \left( \frac 2\sigma+\beta \right) = 0 ,
	\]
	and that the next order is~$o(1)$ and can be neglected.
	The fourth solution is therefore 
	\begin{equation} 
		B_1(\rho) \sim
			\rho^{-\frac 2\sigma -i\frac{1}{b^3}}.
	\end{equation}
\end{subequations}

\section{\label{app:Noether}Application of Noether Theorem for the BNLS}

The Lagrangian density of the BNLS is  \[
	\mathcal{L}\left(\psi, \psi^*, \psi_t, \psi_t^*,
		\Delta\psi, \Delta\psi^* \right)
		= \frac{i}{2} \left(
			\psi_t\psi^* - \psi_t^*\psi
		\right)
		- \left|\Delta \psi\right|^2
		+ \frac{1}{1+\sigma} \left|\psi\right|^{2(\sigma+1)}.
\]

We cite here Noether's Theorem, as given in \cite{Sulem-99}:
\begin{thrm}[Noether Theorem] \label{thrm:Noether}
	If the action integral $
	\iint \mathcal{L}\,
		d\bvec{x}dt
	$ is invariant under the infinitesimal transformation
	\begin{eqnarray*}
		t &\mapsto& \tilde{t} = t+\delta t(\bvec{x},t,\psi), \\
		\bvec{x} &\mapsto& \tilde{\bvec{x}} = \bvec{x}
			+\delta \bvec{x}(\bvec{x},t,\psi), \\
		\psi &\mapsto& \tilde \psi = \psi+\delta \psi(\bvec{x},t,\psi),
	\end{eqnarray*}
	then 
	\begin{equation}	\label{eq:Noether}
		\int \left[ 
			\frac{\partial \mathcal{L}}{\partial \psi_t} 
				\left( 
					\psi_t \delta t
					+ \nabla \psi \cdot \delta \bvec{x}
					-\delta \psi
				\right)
				+
			\frac{\partial \mathcal{L}}{\partial \psi_t^*} 
				\left( 
					\psi_t^* \delta t
					+ \nabla \psi^* \cdot \delta \bvec{x}
					-\delta \psi^*
				\right)
			-\mathcal{L}\delta t
		\right]
		d\bvec{x}
	\end{equation}
	is a conserved quantity.
\end{thrm}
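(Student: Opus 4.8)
The plan is to establish Noether's identity in its local form $\partial_t T^0 + \nabla\cdot\bvec{T}=0$ and then integrate over space. I parametrize the transformation by an infinitesimal parameter, treat $\delta t$, $\delta\bvec{x}$, $\delta\psi$ as first-order quantities, and discard $\mathcal{O}(\delta^2)$ throughout. The central device is the \emph{form variation} (the change of the field at a fixed spacetime point),
\[
	\bar\delta\psi = \delta\psi - \psi_t\,\delta t - \nabla\psi\cdot\delta\bvec{x},
\]
whose key property is that it commutes with $\partial_t$, $\nabla$ and hence with $\Delta$. Writing the transformed action back over the old variables introduces the Jacobian $1 + \partial_t\delta t + \nabla\cdot\delta\bvec{x}$ of the coordinate change, and converting the total variation of $\mathcal{L}$ into its form variation produces
\[
	\delta S = \iint \left( \bar\delta\mathcal{L} + \partial_t(\mathcal{L}\,\delta t) + \nabla\cdot(\mathcal{L}\,\delta\bvec{x}) \right) d\bvec{x}\,dt,
\]
where $\bar\delta\mathcal{L}$ is obtained by the chain rule from the dependence of $\mathcal{L}$ on $\psi,\psi^*,\psi_t,\psi_t^*,\Delta\psi,\Delta\psi^*$.

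Next I would expand $\bar\delta\mathcal{L}$ and integrate by parts to separate a \emph{bulk} term from a \emph{divergence} term. Because $\bar\delta$ commutes with derivatives, the contributions $\frac{\partial\mathcal{L}}{\partial\psi_t}\partial_t\bar\delta\psi$ and $\frac{\partial\mathcal{L}}{\partial(\Delta\psi)}\Delta\bar\delta\psi$ (and their conjugates) are rewritten by moving the derivatives off $\bar\delta\psi$, the latter requiring two spatial integrations by parts. The bulk term then collects the Euler--Lagrange expression
\[
	E[\psi] = \frac{\partial\mathcal{L}}{\partial\psi} - \partial_t\frac{\partial\mathcal{L}}{\partial\psi_t} + \Delta\frac{\partial\mathcal{L}}{\partial(\Delta\psi)}
\]
multiplied by $\bar\delta\psi$ (plus the conjugate), and this vanishes identically because $\psi$ solves the BNLS~\eqref{eq:BNLS}. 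Everything that remains is a spacetime divergence $\partial_t T^0 + \nabla\cdot\bvec{T}$. Invariance, $\delta S=0$ on arbitrary subregions, forces the pointwise identity $\partial_t T^0 + \nabla\cdot\bvec{T}=0$, and reading off the time component gives, up to an overall sign that is immaterial for conservation,
\[
	T^0 = \frac{\partial\mathcal{L}}{\partial\psi_t}\left(\psi_t\,\delta t + \nabla\psi\cdot\delta\bvec{x} - \delta\psi\right) + \frac{\partial\mathcal{L}}{\partial\psi_t^*}\left(\psi_t^*\,\delta t + \nabla\psi^*\cdot\delta\bvec{x} - \delta\psi^*\right) - \mathcal{L}\,\delta t,
\]
which is precisely the integrand in~\eqref{eq:Noether}. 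Integrating $\partial_t T^0 + \nabla\cdot\bvec{T}=0$ over $\bvec{x}\in\Real^d$ and dropping $\int\nabla\cdot\bvec{T}\,d\bvec{x}$ (which vanishes for $H^2$ solutions decaying at spatial infinity) then yields $\frac{d}{dt}\int T^0\,d\bvec{x}=0$, the asserted conservation law.

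The main obstacle is the bookkeeping forced by the \emph{second-order} spatial dependence of $\mathcal{L}$ on $\Delta\psi$. The integration by parts in $\Delta\bar\delta\psi$ must be carried out twice, generating flux contributions that couple $\frac{\partial\mathcal{L}}{\partial(\Delta\psi)}$, $\nabla\frac{\partial\mathcal{L}}{\partial(\Delta\psi)}$, $\nabla\Delta\psi$ and $\bar\delta\psi$. I would have to verify carefully that all of these land in the purely spatial flux $\bvec{T}$ and not in the density $T^0$; this is exactly what makes the conserved density depend only on the \emph{time}-derivative momenta $\partial\mathcal{L}/\partial\psi_t$, $\partial\mathcal{L}/\partial\psi_t^*$ and on $\mathcal{L}$ itself, as in~\eqref{eq:Noether}, with no spatial-derivative momenta surviving. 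I would also need to confirm that these higher-order fluxes decay rapidly enough that the spatial boundary terms are genuinely negligible for the admissible class of solutions.
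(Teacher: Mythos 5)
The paper does not actually prove this statement: it is quoted verbatim from Sulem--Sulem \cite{Sulem-99} (``We cite here Noether's Theorem, as given in...''), and the appendix only \emph{applies} it to the phase symmetry. Your outline supplies the missing derivation, and it is the standard, correct one: the form variation $\bar\delta\psi=\delta\psi-\psi_t\,\delta t-\nabla\psi\cdot\delta\bvec{x}$ commutes with $\partial_t$ and $\Delta$; the Jacobian of the coordinate change contributes $\partial_t(\mathcal{L}\,\delta t)+\nabla\cdot(\mathcal{L}\,\delta\bvec{x})$; the Euler--Lagrange expression $\frac{\partial\mathcal{L}}{\partial\psi}-\partial_t\frac{\partial\mathcal{L}}{\partial\psi_t}+\Delta\frac{\partial\mathcal{L}}{\partial(\Delta\psi)}$ (which for~\eqref{eq:Lagrangian} reproduces the BNLS) kills the bulk term; and the identity $\frac{\partial\mathcal{L}}{\partial(\Delta\psi)}\Delta\bar\delta\psi=\nabla\cdot\bigl(\frac{\partial\mathcal{L}}{\partial(\Delta\psi)}\nabla\bar\delta\psi-\bar\delta\psi\,\nabla\frac{\partial\mathcal{L}}{\partial(\Delta\psi)}\bigr)+\bar\delta\psi\,\Delta\frac{\partial\mathcal{L}}{\partial(\Delta\psi)}$ confirms your key structural claim that the second-order spatial dependence feeds only the spatial flux $\bvec{T}$, so the density $T^0$ involves only $\partial\mathcal{L}/\partial\psi_t$, $\partial\mathcal{L}/\partial\psi_t^*$ and $\mathcal{L}\,\delta t$, matching~\eqref{eq:Noether} up to the overall sign you note. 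The one caveat you flag at the end is real and worth keeping: for symmetries with unbounded generators (e.g.\ the dilation $\delta\bvec{x}=\varepsilon\bvec{x}$ behind~\eqref{eq:J_dilation}), the vanishing of $\int\nabla\cdot\bvec{T}\,d\bvec{x}$ requires decay beyond bare $H^2$ membership, so the conservation law holds for suitably decaying solutions; the paper uses the theorem at the same formal level, so this does not put you at a disadvantage relative to the source.
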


For example, the BNLS action integral is invariant under the phase-multiplication~$
	\psi(t,\bvec{x}) \mapsto
	e^{i\varepsilon} \psi(t,\bvec{x}).
$
In this case,~$
	\delta t = 0, \delta \bvec{x} = 0,
	\delta \psi = i \psi,
$
and so Theorem~\ref{thrm:Noether} implies that the integral
\begin{align*}
		& \int \left[ 
			\frac{\partial \mathcal{L}}{\partial \psi_t} 
				\left( 
					\psi_t \delta t
					+ \nabla \psi \cdot \delta \bvec{x}
					-\delta \psi
				\right)
				+
			\frac{\partial \mathcal{L}}{\partial \psi_t^*} 
				\left( 
					\psi_t^* \delta t
					+ \nabla \psi^* \cdot \delta \bvec{x}
					-\delta \psi^*
				\right)
			-\mathcal{L}\delta t
		\right]
		d\bvec{x} \\
		& =
		\int \left[ 
			\frac i2\psi^* 
				\left( 
					\psi_t \cdot 0
					+ \nabla \psi \cdot 0
					- i \psi
				\right)
			- \frac i2\psi 
				\left( 
					\psi_t^* \cdot 0
					+ \nabla \psi ^*\cdot 0
					+ i \psi^*
				\right)
			-\mathcal{L}\cdot 0
		\right]
		d\bvec{x} \\
		& = \norm{\psi}_2^2,
\end{align*}
i.e., the power, is a conserved quantity.
Other conservation laws can be found in a similar manner.


\begin{thebibliography}{10}

\bibitem{Ablowitz-Musslimani-SLSR:2005}
{\sc MJ~Ablowitz and ZH~Musslimani}, {\em {Spectral renormalization method for
  computing self-localized solutions to nonlinear systems}}, Opt. Lett., {30}
  ({2005}), pp.~{2140--2142}.

\bibitem{Baruch_Fibich_Gavish:2009}
{\sc {G.} {B}aruch, {G.} {F}ibich, and {N.} {G}avish}, {\em Singular standing
  ring solutions of nonlinear partial differential equations}, Submitted for
  publication,  (2009).

\bibitem{Baruch_Fibich_Mandelbaum:2009b}
{\sc {G.} {B}aruch, {G.} {F}ibich, and {E.} {M}andelbaum}, {\em Ring-type
  singular solutions of the biharmonic nonlinear {S}chr\"odinger equation},
  Preprint,  (2009).

\bibitem{Ben-Artzi-00}
{\sc M.~Ben-Artzi, H.~Koch, and J.-C. Saut}, {\em Dispersion estimates for
  fourth order {S}chr\"odinger equations}, C. R. Acad. Sci. Paris S\'er. I
  Math., 330 (2000), pp.~87--92.

\bibitem{Berge-92}
{\sc L.~Berg\'e and D.~Pesme}, {\em Time dependent solutions of wave collapse},
  Phys. Lett. A, 166 (1992), pp.~116--122.

\bibitem{Bourgain-98}
{\sc J.~Bourgain}, {\em {Refinements of Strichartz' inequality and applications
  to 2D-NLS with critical nonlinearity}}, Int Math Res Notices, 1998 (1998),
  pp.~253--283.

\bibitem{Budd-Williams-Galaktionov_2004}
{\sc C.~J. Budd, J.~F. Williams, and V.~A. Galaktionov}, {\em Self-similar
  blow-up in higher-order semilinear parabolic equations}, SIAM Journal of
  Applied Mathematics, 64 (2004), pp.~1775--1809.

\bibitem{Cazenave-88}
{\sc T.~Cazenave and F.B. Weissler}, {\em The {C}auchy problem for the
  nonlinear {S}chr\"{o}dinger equation in {$H^1$}}, Manuscripta Math, 61
  (1988), pp.~477--498.

\bibitem{ChaeHongLee:2009}
{\sc M.~Chae, S.~Hong, J.~Kim, S.~Lee, and C.~W. Yang}, {\em On mass
  concentration for the {$L^2$}-critical nonlinear {S}chrodinger equations},
  Comm. in Partial Diff. Equations, 34 (2009), pp.~486--505(20).

\bibitem{SGR-08}
{\sc A.~Ditkowsky and N.~Gavish}, {\em A grid redistribution method for
  singular problems}, J. Comp. Phys., 228 (2009), pp.~2354--2365.

\bibitem{critical-00}
{\sc G.~Fibich and A.~Gaeta}, {\em Critical power for self-focusing in bulk
  media and in hollow waveguides}, Opt. Lett., 25 (2000), pp.~335--337.

\bibitem{Fibich_Ilan_George_BNLS:2002}
{\sc Gadi Fibich, Boaz Ilan, and George Papanicolaou}, {\em Self-focusing with
  fourth-order dispersion}, SIAM J. Applied Math., 62 (2002), pp.~1437--1462.

\bibitem{PNLS-99}
{\sc G.~Fibich and G.C. Papanicolaou}, {\em Self-focusing in the perturbed and
  unperturbed nonlinear {S}chr\"odinger equation in critical dimension}, SIAM
  J. Applied Math., 60 (1999), pp.~183--240.

\bibitem{Fraiman-85}
{\sc G.M. Fraiman}, {\em Asymptotic stability of manifold of self-similar
  solutions in self-focusing}, Sov. Phys. JETP, 61 (1985), pp.~228--233.

\bibitem{Gagliardo-58}
{\sc E.~Gagliardo}, {\em Proprieta di alcune classi di funzioni in piu
  varibili}, Ricerche di Math., 7 (1958), pp.~102--137.

\bibitem{Gagliardo-59}
\leavevmode\vrule height 2pt depth -1.6pt width 23pt, {\em Ulteriora proprieta
  di alcune classi di funzioni in piu varibili}, Ricerche di Math., 8 (1959),
  pp.~24--51.

\bibitem{GigaKohn_85}
{\sc Y.~Giga and R.V. Kohn}, {\em Asymptotically self-similar blow-up of
  semilinear heat equations}, Comm. on Advances Pure and Appl. Math., 38
  (1985), pp.~297--319.

\bibitem{Landman-88}
{\sc M.J. Landman, G.C. Papanicolaou, C.~Sulem, and P.L. Sulem}, {\em Rate of
  blowup for solutions of the nonlinear {S}chr\"{o}dinger equation at critical
  dimension}, Phys. Rev. A, 38 (1988), pp.~3837--3843.

\bibitem{LeMesurier-88}
{\sc B.J. LeMesurier, G.C. Papanicolaou, C.~Sulem, and P.L. Sulem}, {\em Local
  structure of the self-focusing singularity of the nonlinear {S}chr\"{o}dinger
  equation}, Physica D, 32 (1988), pp.~210--226.

\bibitem{Elad_thesis}
{\sc Elad Mandelbaum}, {\em Singular solutions of the biharmonic nonlinear
  {S}chr\"odinger equation.}, master's thesis, Department of applied
  {M}athematics, Shcool of Mathematical Sciences, Tel Aviv University, 2009.

\bibitem{Merle-96}
{\sc F.~Merle}, {\em Lower bounds for the blow-up rate of solutions of the
  {Z}akharov equation in dimension two}, Comm. Pure Appl. Math., 49 (1996),
  pp.~765--794.

\bibitem{Merle-03}
{\sc F.~Merle and P.~Raphael}, {\em Sharp upper bound on the blow-up rate for
  the critical nonlinear {S}chr\"odinger equation}, Geom. Funct. Anal., 13
  (2003), pp.~591--642.

\bibitem{Merle-90}
{\sc F.~Merle and Y.~Tsutsumi}, {\em {$L^2$} concentration of blow-up solutions
  for the nonlinear {S}chr\"{o}dinger equation with critical power
  nonlinearity}, J. Diff. Equ., 84 (1990), pp.~205--214.

\bibitem{miao-2008}
{\sc Changxing Miao, Guixiang Xu, and Lifeng Zhao}, {\em Global wellposedness
  and scattering for the defocusing energy-critical nonlinear {S}chrodinger
  equations of fourth order in dimensions $d\geq9$}, arXiv preprint,  (2008).

\bibitem{Miao20093715}
\leavevmode\vrule height 2pt depth -1.6pt width 23pt, {\em Global
  well-posedness and scattering for the focusing energy-critical nonlinear
  {S}chrödinger equations of fourth order in the radial case}, J. of Differ.
  Equations, 246 (2009), pp.~3715 -- 3749.

\bibitem{Nirenberg-59}
{\sc L.~Nirenberg}, {\em On elliptic partial differential equations}, Ann.
  Scuola Norm. Sup. Pisa, 13 (1959), pp.~115--162.

\bibitem{Pausader_DPDEs-2007}
{\sc B.~Pausader}, {\em Global well-posedness for energy critical fourth-order
  {S}chrodinger equations in the radial case}, Dynamics of {PDE}, 4 (2007),
  pp.~197--225.

\bibitem{Pausader_JFA2009}
{\sc Benoit Pausader}, {\em The cubic fourth-order {S}chrödinger equation},
  Journal of Functional Analysis, 256 (2009), pp.~2473 -- 2517.

\bibitem{Pausader_DCDS-2009a}
\leavevmode\vrule height 2pt depth -1.6pt width 23pt, {\em The focusing
  energy-critical fourth-order {S}chrödinger equation with radial data},
  Discrete and continuous dynamical systems, 24 (2009), pp.~1275--1292.

\bibitem{Petviashvili:1976}
{\sc V.~I. {Petviashvili}}, {\em {Equation of an extraordinary soliton}}, Sov.
  J. Plasma Phys., 2 (1976), pp.~469--472.

\bibitem{Ren-00}
{\sc W.~Ren and X.P. Wang}, {\em An iterative grid redistribution method for
  singular problems in multiple dimensions}, J. Comput. Phys., 159 (2000),
  pp.~246--273.

\bibitem{Strauss-77}
{\sc W.A. Strauss}, {\em Existence of solitary waves in higher dimensions},
  Comm. Math. Phys., 55 (1977), pp.~149--162.

\bibitem{Sulem-99}
{\sc C.~Sulem and P.L. Sulem}, {\em The Nonlinear {S}chr\"{o}dinger Equation},
  Springer, New-York, 1999.

\bibitem{Talanov-70}
{\sc V.I. Talanov}, {\em Focusing of light in cubic media}, JETP Lett., 11
  (1970), pp.~199--201.

\bibitem{Tsutsumi-90}
{\sc Y.~Tsutsumi}, {\em Rate of {$L^2$} concentration of blow-up solutions for
  the nonlinear {S}chr\"{o}dinger equation with critical power},
  Nonlinear-Anal, 15 (1990), pp.~719--724.

\bibitem{Weinstein-83}
{\sc M.I. Weinstein}, {\em Nonlinear {S}chr\"{o}dinger equations and sharp
  interpolation estimates}, Comm. Math. Phys., 87 (1983), pp.~567--576.

\bibitem{Weinstein-89}
\leavevmode\vrule height 2pt depth -1.6pt width 23pt, {\em The nonlinear
  {S}chr\"{o}dinger equations - singularity formation, stability and
  dispersion}, Contemporary Mathematics, 99 (1989), pp.~213--232.

\end{thebibliography}

\end{document}